\DeclareSymbolFont{sfoperators}{OT1}{ptm}{m}{n}
\DeclareSymbolFontAlphabet{\mathsf}{sfoperators}
\def\operator@font{\mathgroup\symsfoperators}
\newcommand{\eqdef}{\stackrel{\mbox{\tiny def}}{=}}
\numberwithin{equation}{section}
\newtheorem{thm}{Theorem}[section]
\newtheorem{lem}[thm]{Lemma}
\newtheorem{prop}[thm]{Proposition}
\newtheorem{cor}[thm]{Corollary}
\newtheorem{assumption}[thm]{Assumption}
\theoremstyle{remark}
\newtheorem{remark}[thm]{Remark}
\newtheorem{rmk}[thm]{Remark}
\def\th@newremark{\th@remark\thm@headfont{\bfseries}}
\def\bdiamond{\mathop{\mathpalette\bdi@mond\relax}}
\newcommand\bdi@mond[2]{%
	\vcenter{\hbox{\m@th
			\scalebox{\ifx#1\displaystyle 2.6\else1.8\fi}{$#1\diamond$}%
	}}%
}
\def\bDiamond{\mathop{\mathpalette\bDi@mond\relax}}
\newcommand\bDi@mond[2]{%
	\vcenter{\hbox{\m@th
			\scalebox{\ifx#1\displaystyle 2.6\else1.2\fi}{$#1\Diamond$}%
	}}%
}
\definecolor{darkgreen}{rgb}{0.1,0.7,0.1}
\definecolor{darkred}{rgb}{0.7,0.1,0.1}
\definecolor{darkblue}{rgb}{0,0,0.7}
\newcommand{\EE}{\mathbb{E}}
\newcommand{\NN}{\mathbb{N}}
\newcommand{\PP}{\mathbb{P}}
\newcommand{\RR}{\mathbb{R}}
\newcommand{\bB}{\mathcal{B}}
\newcommand{\dD}{\mathcal{D}}
\newcommand{\eE}{\mathcal{E}}
\newcommand{\fF}{\mathcal{F}}
\newcommand{\gG}{\mathcal{G}}
\newcommand{\hH}{\mathcal{H}}
\newcommand{\iI}{\mathcal{I}}
\newcommand{\kK}{\mathcal{K}}
\newcommand{\nN}{\mathcal{N}}
\newcommand{\qQ}{\mathcal{Q}}
\newcommand{\R}{\mathcal{R}}
\newcommand{\sS}{\mathcal{S}}
\newcommand{\tT}{\mathcal{T}}
\newcommand{\xX}{\mathcal{X}}
\newcommand{\yY}{\mathcal{Y}}
\newcommand{\1}{\mathbf{1}}
\newcommand{\LLL}{\mathcal{L}}
\newcommand{\eps}{\varepsilon}
\newcommand{\FFF}{\mathcal{F}}
\newcommand{\uem}{u_{m,\eps}}
\newcommand{\vem}{v_{m,\eps}}
\newcommand{\cov}{{\operator@font cov}}
\newcommand{\var}{{\operator@font var}}
\newcommand{\corr}{{\operator@font corr}}
\newcommand{\diam}{{\operator@font diam}}
\newcommand{\Av}{{\operator@font Av}}
\newcommand{\trig}{{\operator@font trig}}
\newcommand{\Enh}{{\operator@font Enh}}
\colorlet{symbols}{blue!90!black}
\colorlet{testcolor}{green!60!black}
\def\${|\!|\!|}
\def\DeclareSymbol#1#2#3{\expandafter\gdef\csname MH@symb@#1\endcsname{\tikz[baseline=#2,scale=0.15,draw=symbols]{#3}}\expandafter\gdef\csname MH@symb@#1s\endcsname{\scalebox{0.7}{\tikz[baseline=#2,scale=0.15,draw=symbols]{#3}}}}
\def\<#1>{\csname MH@symb@#1\endcsname}
\setlist[itemize]{topsep=3pt,itemsep=1.5pt,parsep=0pt}
\def\scal#1{\langle#1\rangle}
\def\cent#1{\mathopen{{\langle\kern-0.3em\rangle}}#1\mathclose{{\langle\kern-0.3em\rangle}}}
\def\d{\partial}
\begin{document}

\title{Global well-posedness for the mass-critical stochastic nonlinear Schr\"{o}dinger equation on $\RR$: small initial data}
\author{Chenjie Fan$^1$ and Weijun Xu$^2$}
\institute{University of Chicago, US, \email{cjfanpku@gmail.com}
\and University of Warwick, UK / NYU Shanghai, China, \email{weijunx@gmail.com}}

\maketitle

\begin{abstract}
	We prove the global existence of solution to the small data mass critical stochastic nonlinear Schr\"{o}dinger equation in $d=1$. We further show the stability of the solution under perturbation of initial data. Our construction starts with the existence of the solution to the truncated subcritical problem. We then obtain uniform bounds on these solutions that enable us to reach criticality and then remove the truncation. 
\end{abstract}

\setcounter{tocdepth}{2}
\microtypesetup{protrusion=false}
\tableofcontents
\microtypesetup{protrusion=true}

\section{Introduction}

\subsection{The problem and the main statement}

The aim of this article is to show global existence of the solution to the one dimensional stochastic nonlinear Schr\"{o}dinger equation
\begin{equation} \label{eq:main_eq}
i \partial_t u+\Delta  u = |u|^4 u + u \circ \frac{{\rm d} W}{{\rm d} t}, \qquad x \in \RR,\; t \geq 0
\end{equation}
with small initial data $u_0$. Here, $\frac{{\rm d} W}{{\rm d}t}$ is a real-valued Gaussian process that is white in time and coloured in space. The precise assumption on the noise will be specified below. In the equation, $\circ$ denotes the Stratonovich product, which is the only choice of the product that preserves the $L^2$-norm of the solution. In practice, it is more convenient to treat the equation in the It\^{o} form. In order to be more precise about the equation and the It\^{o}-Stratonovich correction, we give the precise definition of the noise below. 

Let $(\Omega, \FFF, \PP)$ be a probability space, $(\FFF_t)_{t \geq 0}$ be a filtration and $\{B_k\}_{k \in \NN}$ be a sequence of independent standard Brownian motions adapted to $(\FFF_t)$. Fix a set of orthonormal basis $\{e_k\}$ of $L^{2}(\RR)$ and a linear operator $\Phi$ on it. The process $W$ is
\begin{equation*}
W(t,x) := \sum_{k \in \NN} B_{k}(t) \cdot (\Phi e_k)(x). 
\end{equation*}
$W$ is then a Gaussian process on $L^{2}(\RR)$ with covariance operator $\Phi \Phi^{*}$, which can be formally written as $W = \Phi \tilde{W}$ where $\tilde{W}$ is the cylindrical Wiener process. Our assumption on the operator $\Phi$ is the following.

\begin{assumption} \label{as:Phi}
	We assume $\Phi: L^{2}(\RR) \rightarrow \hH$ is a trace-class operator, where $\hH$ is the Hilbert space of real-valued functions with the inner product
	\begin{equation*}
	\scal{f,g}_{\hH} = \sum_{j=0}^{M} \scal{(1+|x|^{K}) f^{(j)}, (1+|x|^{K})g^{(j)}}_{L^{2}}
	\end{equation*}
	for some sufficiently large $K$ and $M$. 
\end{assumption}

Clearly $\hH \hookrightarrow W^{1,p}$ for every $p \in [1,+\infty]$, and our assumption of $\Phi$ implies $\Phi$ is $\gamma$-radonifying from $L^{2}$ to $W^{1,p}$ for every $p \in [1,+\infty]$ ($K, M \geq 10$ would be sufficient for our purpose). A typical example of such an operator is $\Phi e_0 = V(x)$ for some nice function $V$ while $\Phi$ maps all other basis vectors to $0$. In this case, the noise is $B'(t) V(x)$ where $B(t)$ is the standard Brownian motion. 

\begin{rmk}
	With such spacial smoothness of the noise, one may wonder whether the solution theory for \eqref{eq:main_eq} follows directly from the deterministic case. This turns out to be not the case. In fact, the main issue is that the nonlinearity and randomness in the natural solution space together prevent one from setting up a usual fixed point problem. See Sections~\ref{sec:difficulty} and~\ref{sec:strategy} below for more detailed discussions. 
\end{rmk}

Now, we re-write \eqref{eq:main_eq} in its It\^{o} form as
\begin{equation*}
i \partial_t u + \Delta u = |u|^4 u + u \dot{W} - \frac{i}{2} u F_{\Phi}, 
\end{equation*}
where $\dot{W} = \frac{{\rm d} W}{{\rm d} t}$, and
\begin{equation*}
F_{\Phi}(x) = \sum_{k} (\Phi e_k)^{2}(x)
\end{equation*}
is the It\^{o}-Stratonovich correction. Note that $F_{\Phi}$ is independent of the choice of the basis. The assumption on $\Phi$ guarantees that $\|F_{\Phi}\|_{W^{1,p}_{x}} < \infty$ for all $p \in [1,+\infty]$. 

Before we state the main theorem, we introduce a few notations. Let $\sS(t) = e^{i t \Delta}$ be the linear propagator. For every interval $\iI$, let
\begin{equation} \label{eq:space}
\xX_{1}(\iI) = L_{t}^{\infty} L_{x}^{2}(\iI) := L^{\infty}(\iI, L^{2}(\RR)), \quad \xX_{2}(\iI) = L_{t}^{5} L_{x}^{10}(\iI) := L^{5}(\iI, L^{10}(\RR)), 
\end{equation}
and $\xX(\iI) = \xX_{1}(\iI) \cap \xX_{2}(\iI)$ in the sense that $\|\cdot\|_{\xX(\iI)} = \|\cdot\|_{\xX_{1}(\iI)} + \|\cdot\|_{\xX_{2}(\iI)}$. We also write $L_{\omega}^{\rho} \xX$ as an abbreviation for $L^{\rho}(\Omega,\xX)$. Our main statement is then the following. 

\begin{thm} \label{th:main}
There exists $\delta_0 > 0$ and $T_0 > 0$ depending on $\delta_0$ only such that for every $u_0$ independent of the filtration $(\fF_t)$ with $\|u_0\|_{L_{\omega}^{\infty}L_x^2} < \delta_0$ and every $\rho_0>5$, there exists a unique process $u \in L_{\omega}^{\rho_0} \xX(0,T_0)$ adapted to the filtration $\FFF_{\leq T_0}$ such that
	\begin{equation} \label{eq:duhamel_main}
	\begin{split}
	u(t) &= \sS(t) u_0 - i \int_{0}^{t} \sS(t-s) \big( |u(s)|^{4} u(s) \big) {\rm d} s\\
	&- i \int_{0}^{t} \sS(t-s) u(s) {\rm d} W_s - \frac{1}{2} \int_{0}^{t} \sS(t-s) \big( F_{\Phi} u(s) \big) {\rm d} s, 
	\end{split}
	\end{equation}
	where the above integral is in the It\^{o} sense, and there exists $C>0$ such that
	\begin{equation} \label{eq:main_control}	
	\|u\|_{L_{\omega}^{\rho_{0}}\xX(0,T_0)} \leq C \|u_{0}\|_{L_{\omega}^{\rho_{0}}L_{x}^{2}}. 
	\end{equation}
	Furthermore, $\omega$-almost surely, one has the mass conservation $\|u(t)\|_{L_x^2} = \|u_0\|_{L_x^2}$ for all $t \in [0,T_0]$. 
\end{thm}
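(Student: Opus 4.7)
\bigskip
\noindent\textbf{Proof plan.} The plan is to realise \eqref{eq:duhamel_main} as a fixed point of
\[
\Psi(u)(t) = \sS(t)u_0 - i\!\int_0^t \sS(t-s)(|u|^4u)(s)\,ds - i\!\int_0^t \sS(t-s)u(s)\,dW_s - \tfrac{1}{2}\!\int_0^t \sS(t-s)(F_\Phi u)(s)\,ds
\]
on a small closed ball of $L_\omega^{\rho_0}\xX(0,T_0)$. I would control the deterministic pieces by one-dimensional Strichartz theory on the admissible pair $(5,10)$: $\|\sS(\cdot)u_0\|_{\xX(0,T_0)} \lesssim \|u_0\|_{L^2_x}$ pointwise in $\omega$; the critical nonlinearity contributes $\lesssim \|u\|_{\xX(0,T_0)}^5$ via the dual Strichartz identity $\||u|^4 u\|_{L^1_t L^2_x}=\|u\|_{L^5_t L^{10}_x}^5$; and the It\^o--Stratonovich correction is bounded by $T_0\|F_\Phi\|_{L^\infty_x}\|u\|_{L^\infty_t L^2_x}$, which is a small perturbation for $T_0$ small (Assumption~\ref{as:Phi} guarantees $F_\Phi\in L^\infty_x$). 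For small $\|u_0\|_{L^2_x}$ these three contributions alone close a fixed point on a ball of radius $\sim\|u_0\|_{L^2_x}$, provided the stochastic convolution can be controlled on the same scale.

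\medskip

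The stochastic term $Z(u)(t):=\int_0^t\sS(t-s)u(s)\,dW_s$ will be the delicate object. For the $\xX_1$-component, combining the $L^2$-isometry of $\sS$ with Burkholder--Davis--Gundy in Hilbert space should yield
\[
\|Z(u)\|_{L_\omega^{\rho_0}L^\infty_t L^2_x(0,T_0)} \lesssim T_0^{1/2}\,C(\Phi)\,\|u\|_{L_\omega^{\rho_0}L^\infty_t L^2_x(0,T_0)},
\]
which closes thanks to the trace-class assumption on $\Phi$. For the $\xX_2=L^5_t L^{10}_x$ component one needs a \emph{stochastic Strichartz} bound: using that $L^{10}_x$ is UMD and that $\Phi$ is $\gamma$-radonifying from $L^2$ into $W^{1,p}_x$ for every $p$ (Assumption~\ref{as:Phi}), together with a Strichartz/duality argument applied to each basis term $\sS(t-s)(\Phi e_k) u(s)$, one aims to establish
\[
\|Z(u)\|_{L_\omega^{\rho_0} L^5_t L^{10}_x(0,T_0)} \lesssim T_0^{\alpha}\,C(\Phi)\,\|u\|_{L_\omega^{\rho_0}\xX(0,T_0)}
\]
for some $\alpha>0$; the hypothesis $\rho_0>5$ should enter precisely to trade the $L^5_t$-exponent inside BDG for an $L^{\rho_0}_\omega$ control via H\"older.

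\medskip

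The hard part is the issue flagged in the remark following Theorem~\ref{th:main}: the critical nonlinearity and the stochastic convolution compete in the same scale-invariant norm $\xX_2$, and I do not expect a direct Picard iteration on \eqref{eq:duhamel_main} to close without additional structure. Following the strategy outlined in the abstract, I would therefore first replace $|u|^4 u$ by a truncated, subcritical approximation indexed by $m$, for which a standard fixed-point argument readily produces solutions; the main work would then be to establish the estimates above \emph{uniformly in $m$} and to pass to the limit $m\to\infty$. The stability estimate mentioned in the abstract would give uniqueness in the limit class together with the bound \eqref{eq:main_control}. Finally, mass conservation $\|u(t)\|_{L^2_x}=\|u_0\|_{L^2_x}$ would follow by applying It\^o's formula to $\|u(t)\|_{L^2_x}^2$: the quadratic variation arising from the noise cancels exactly against the correction $-\tfrac{1}{2}F_\Phi u$, so that $d\|u\|_{L^2_x}^2 = 0$ pathwise--this cancellation being precisely the reason the Stratonovich convention was chosen in \eqref{eq:main_eq}.
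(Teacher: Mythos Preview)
Your proposal correctly anticipates that a direct contraction on $L_\omega^{\rho_0}\xX(0,T_0)$ fails and that one must approximate, but it misidentifies the core obstruction and leaves the essential mechanism of the proof unspecified.

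First, the obstruction. You describe the difficulty as the nonlinearity and stochastic convolution ``competing in the same scale-invariant norm $\xX_2$''. That is the deterministic critical issue; it is solvable by smallness of data. The genuinely stochastic obstruction is different: if $u\in L_\omega^{\rho_0}\xX$, then $|u|^4u$ is a priori only in $L_\omega^{\rho_0/5}$, so the map $\Psi$ does not even send $L_\omega^{\rho_0}\xX$ to itself. No amount of smallness of $T_0$ or of $\|u_0\|_{L^2_x}$ repairs this loss of $\omega$-integrability. This is why the paper's construction is \emph{not} perturbative and leans on pathwise mass conservation of the approximants.

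Second, the approximation scheme. The paper uses a \emph{two-parameter} family $u_{m,\eps}$ solving a truncated ($\theta_m$) and subcritical ($|u|^{4-\eps}u$) problem; the truncation restores the self-map property, and the subcriticality supplies a factor $T^{\eps/4}$ to close a contraction. One then sends $\eps\to0$ for fixed $m$ (this step requires perturbing the initial data into $H^1_x$ and proving persistence of regularity, so that one can compare different powers $|v|^{4-\eps_1}$ and $|v|^{4-\eps_2}$ via an $L^\infty_x$ bound), and only afterwards $m\to\infty$.

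Third, and most importantly, you write ``the main work would then be to establish the estimates above uniformly in $m$'' but give no mechanism. This is the heart of the paper. The uniform bound is obtained \emph{pathwise}: one uses the mass conservation $\|u_{m,\eps}(t)\|_{L^2_x}=\|u_0\|_{L^2_x}$ of the approximants (not of the limit), then performs a \emph{random dissection} $\{\tau_k\}$ of $[0,T_0]$ chosen so that the maximal stochastic convolution has $L^5_tL^{10}_x$-mass at most $1$ on each subinterval. On each $[\tau_k,\tau_{k+1}]$ a continuity argument (using smallness of $\|u_0\|_{L^2_x}$ to absorb the critical nonlinearity) gives a deterministic bound on $\|u_{m,\eps}\|_{\xX_2(\tau_k,\tau_{k+1})}$; the number of subintervals is controlled by $\|\qQ^*\|_{\xX_2(0,T_0)}^5$, which in turn is bounded in $L_\omega^\rho$ by the stochastic Strichartz estimate. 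This pathwise-then-average structure is what circumvents the $\omega$-integrability loss, and it is absent from your sketch.

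Your treatment of the stochastic convolution and of mass conservation via It\^o's formula is fine as far as it goes, but note that mass conservation is needed at the level of the \emph{truncated} approximants, where it is already available from \cite{BD}, not derived a posteriori for the limit.
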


Since the short existence time $T_0$ above depends on $\delta_0$ only and is \textit{deterministic} , and we have the pathwise mass conservation, Theorem~\ref{th:main} immediately implies global existence of the solution.

\begin{cor}
There exists a  $\delta_{0}>0$  such that for every initial data $u_0$ with $\|u_{0}\|_{L_{\omega}^{\infty}L_{x}^{2}} \leq \delta_{0}$, the solution $u$ in Theorem~\ref{th:main} can be extended globally in time. 
\end{cor}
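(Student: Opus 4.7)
The plan is a standard iteration argument using the shift-invariance of the Brownian filtration and the \emph{deterministic} nature of the short existence time $T_0$ in Theorem~\ref{th:main}, combined with the pathwise mass conservation. Concretely, given $u_0$ with $\|u_0\|_{L_\omega^\infty L_x^2} \le \delta_0$ and the solution $u$ on $[0,T_0]$ provided by Theorem~\ref{th:main}, the aim is to restart the equation at time $T_0$ from the initial datum $u(T_0)$, apply Theorem~\ref{th:main} on $[T_0,2T_0]$, and iterate.

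For the first step I would set $\widetilde{W}(t) := W(T_0 + t) - W(T_0)$ and $\widetilde{\fF}_t := \fF_{T_0 + t}$. The process $\widetilde{W}$ is a Gaussian noise with the same covariance operator $\Phi\Phi^*$ as $W$ and it is independent of $\widetilde{\fF}_0 = \fF_{T_0}$ (this is the usual strong Markov/independent increments property of the cylindrical Wiener process and is preserved by the bounded operator $\Phi$). The candidate initial datum for the restart is $u(T_0)$, which is $\fF_{T_0}$-measurable, hence independent of the shifted filtration $(\widetilde{\fF}_t)_{t>0}$; this meets the independence requirement in the hypothesis of Theorem~\ref{th:main}. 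By the pathwise mass conservation in Theorem~\ref{th:main}, one has $\|u(T_0)\|_{L_x^2} = \|u_0\|_{L_x^2}$ almost surely, and therefore
\begin{equation*}
\|u(T_0)\|_{L_\omega^\infty L_x^2} = \|u_0\|_{L_\omega^\infty L_x^2} \le \delta_0.
\end{equation*}
Hence Theorem~\ref{th:main} applies to the shifted problem and yields a unique solution $\widetilde{u}$ on $[0,T_0]$ with respect to $\widetilde{W}$ and the filtration $(\widetilde{\fF}_t)$. Setting $u(T_0 + t) := \widetilde{u}(t)$ extends the original solution to the interval $[0,2T_0]$; the extension satisfies the Duhamel equation~\eqref{eq:duhamel_main} on $[0,2T_0]$ by concatenating the two stochastic integrals, and it is adapted to $(\fF_t)$.

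Iterating this construction on the intervals $[kT_0,(k+1)T_0]$ for $k = 1, 2, \ldots$ produces a solution defined on $[0,\infty)$. The key point that makes the iteration work uniformly is that $T_0$ depends only on $\delta_0$ and that the mass conservation at each restart time $kT_0$ gives $\|u(kT_0)\|_{L_\omega^\infty L_x^2} = \|u_0\|_{L_\omega^\infty L_x^2} \le \delta_0$, so the same $T_0$ can be reused at every step. Uniqueness within the class $L_\omega^{\rho_0}\xX$ on each interval (from Theorem~\ref{th:main}) ensures that different restart windows can be glued consistently.

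The main (and really only) subtlety is the verification that the restart procedure is legitimate, i.e.\ that the shifted noise $\widetilde{W}$ satisfies Assumption~\ref{as:Phi} with the same operator $\Phi$ and that $u(T_0)$ qualifies as an admissible initial datum for the shifted problem. Once this shift argument is set up carefully, the rest is essentially bookkeeping: the uniform smallness is preserved by mass conservation, the existence time is deterministic and bounded below by a fixed $T_0 = T_0(\delta_0)$, and so the solution extends to all of $[0,\infty)$.
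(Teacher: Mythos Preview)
Your argument is correct and follows exactly the route the paper indicates: the paper does not give a separate proof of the corollary but simply observes (just before stating it) that the deterministic existence time $T_0 = T_0(\delta_0)$ together with pathwise mass conservation allows one to iterate Theorem~\ref{th:main}, and your proposal is a careful elaboration of precisely that iteration. One minor slip: you write that $u(T_0)$ is ``independent of the shifted filtration $(\widetilde{\fF}_t)_{t>0}$,'' but in fact $u(T_0)$ is $\widetilde{\fF}_0$-measurable (hence certainly not independent of $\widetilde{\fF}_t$); what you mean, and what is actually needed, is that $u(T_0)$ is $\widetilde{\fF}_0$-measurable and independent of the future increments of $\widetilde{W}$, which is indeed the case.
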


\begin{rmk}
Our solution $u$ to \eqref{eq:main_eq} is constructed via a unique fixed approximation procedure rather than a direct contraction principle. Thus, the uniqueness in Theorem~\ref{th:main} is in the quasilinear sense rather than semi-linear sense. In short, given every small initial data $u_0$, our construction produces a unique global flow $u$ satisfying \eqref{eq:duhamel_main}. Our solution should not be confused with the weak solutions which are typically obtained by compactness arguments. Also, even though Theorem~\ref{th:main} states that $u \in L_{\omega}^{\rho_0} \xX(0,T_0)$, the same $u$ automatically lives in $L_{\omega}^{\rho} \xX(0,T_0)$ for every $\rho \in (\rho_0, +\infty)$. 
\end{rmk}

\begin{rmk}
	Although we state the equation in the defocusing case, we did not use the sign in front of the nonlinearity at all. In fact, one can multiply any $\lambda \in \RR$ in front of the nonlinearity, and all the results in this article still hold without modification of the arguments. We choose to set $\lambda = 1$ just for simplicity of the presentation. 
\end{rmk}

\subsection{Background}
The nonlinear Schr\"odinger equation naturally arises from various physics models, for example , many body quantum system, nonlinear optics. The aim of the article is to investigate the impact of a multiplicative noise to the dynamics of mass critical NLS.

The local well-posedness of the deterministic (defocusing) nonlinear Schr\"odinger equation 
\begin{equation} \label{eq:deterministic}
i\partial_t u+\Delta u=|u|^{p} u, \qquad u_0 \in L_{x}^{2}(\RR^d)
\end{equation}
for $p \in [0, \frac{4}{d}]$ is based on Strichartz estimates and is standard nowadays. In short, every $L_{x}^{2}$ initial data gives rise to a space-time function $u$ that satisfies \eqref{eq:deterministic} locally in time. We refer to \cite{cazenave1989some}, \cite{cazenave2003semilinear} and \cite{tao2006nonlinear} for more details. When in the mass subcritical case for general $L_{x}^{2}$ data or mass critical case for small $L^{2}$ data, the local existence time depends on the size of the data only, and one can extend the solution globally in time thanks to the conservation law. The general $L^{2}$ data problem for the mass critical case is much harder, but finally resolved in a series of works by Dodson( \cite{dodson2012global, dodson2016global, dodson2467global}). 

We remark that the behavior of mass critical problem and mass subcritical problem are different. When $p=\frac{4}{d}$, the nonlinear part of the equation and linear part of the equation morally have some strength. This will make the problem more subtle.

The study of local well-posedness for mass subcritical (defocusing) stochastic nonlinear Schr\"{o}dinger equation with a conservative multiplicative noise for $L^{2}$ initial data has been initiated in \cite{BD}. Global theory follows from local theory via pathwise mass conservation.  We want to remark here that even the local theory in \cite{BD} depends on the mass conservation law, and in particular is not totally of perturbative nature, which is very different from the deterministic case. There have been subsequent works in various refinements in the stochastic subcritical cases (see for example \cite{Rockner1} and \cite{Rockner2} for extensions to non-conservative cases). 

In fact, even in the subcritical case (with $|u|^4$ replaced by $|u|^{4-\eps}$ in \eqref{eq:main_eq}), if one tries to directly construct a solution of \eqref{eq:duhamel_main} via contraction map (say in $L_{\omega}^{\rho_0} \xX$ for some $\rho_0$), then one may wonder why \eqref{eq:duhamel_main} is possible to hold since the integrability of the nonlinearity in the probability space can only be in $L^{\rho_{0}/5}$ rather than $L^{\rho_{0}}$. The key point is that, as in the work of \cite{BD}, the construction relies on the pathwise mass conversation law, and is not totally perturbative, so the stochastic process constructed there satisfies \eqref{eq:main_control} for all $\rho_{0}<\rho<\infty$. We also emphasise here that we need the initial data to be small in $L^{\infty}_{\omega}L_{x}^{2}$. It is not clear at this moment whether and how our method can deal with the case when initial data is small in $L_{\omega}^{\rho} L_{x}^2$ for some $\rho < +\infty$. 

In \cite{Hornung}, the author constructed a local solution to the critical equation \eqref{eq:main_eq} stopped at a time when the $L_t^5 L_x^{10}$ norm of the solution reaches some small positive value. This type of local well-posedness cannot be directly combined with the mass conservation law to give a solution global in time.

\subsection{Difficulties in adapting the deterministic theory}
\label{sec:difficulty}

The classical deterministic theory for the local well-posedness of nonlinear Schr\"{o}dinger equation follows form Picard iteration regime and is of perturbative nature. Let us consider \eqref{eq:deterministic} in $d=1$. To construct a local solution, one needs to show the operator $\Gamma$ given by
\begin{equation}
(\Gamma v)(t):=e^{it\Delta}u_{0}+i\int_{0}^{t}\sS(t-s)\big(|v(s)|^{p}v(s) \big) {\rm d} s
\end{equation}
defines a contraction in a suitable function space. Such spaces are indicated by the Strichartz estimates \eqref{eq:Strichartz_1} and \eqref{eq:Strichartz_2} below, for example
\begin{equation} \label{eq:deterministic_space}
\Big\{ v \in \xX(0,T): \|v\|_{\xX_2(0,T)} \leq M \Big\}
\end{equation}
for some suitable $T$ and $M$, where $\xX$ and $\xX_2$ are as in \eqref{eq:space}. In the subcritical case ($p<4$), whatever $M$ is, we can always choose $T$ small enough so that $\Gamma$ forms a contraction in the space \eqref{eq:deterministic_space}. This is because the Strichartz estimate would give us the factor $T^{1-\frac{p}{4}}$ in front of the nonlinearity (see Proposition~\ref{pr:pre_nonlinear} below). This factor is not available in the critical case when $p=4$. Nevertheless, one can still choose $T$ small enough so that $\|e^{it\Delta}u_0\|_{L_{t}^{5}L_{x}^{10}(0,T)}$ is small, and then $M = 2 \|e^{it\Delta}u_0\|_{L_{t}^{5}L_{x}^{10}(0,T)}$, which is also small. This again ensures $\Gamma$ to be a contraction in the above space. 

We now turn to the stochastic problem
\begin{equation*}
i \partial_t u + \Delta u = |u|^{p} u + u \circ \frac{{\rm d} W}{{\rm d} t}. 
\end{equation*}
The natural space to search for the solution is $L_{\omega}^{\rho} \xX(0,T)$ for some $T>0$ and $\rho \geq 1$. However, the first obstacle is that the analogous Duhamel operator in this case does not even map $L_{\omega}^{\rho}\xX(0,T)$ to itself, whatever $\rho$ and $T$ are! This is because if $v \in L_{\omega}^{\rho}$, then the nonlinearity can only be in $L_{\omega}^{\rho/(p+1)}$. 

To overcome this problem, \cite{BD} introduced a truncation to kill the nonlinearity whenever $\|u\|_{\xX_{2}(0,t)}$ reaches $m$. In the fixed point problem, one then can replace $p$ powers of $u$ by $m^p$, thus yielding an operator mapping a space to itself (with the size of the norms depending on $m$). In the subcritical case $p<4$, one can make the time $T = T_m$ small enough to get a contraction, so that one gets a local solution in $u_{m} \in L_{\omega}^{\rho}(0,T_m)$. It can be extended globally due to pathwise mass conservation. Finally, they showed that the sequence of solutions $\{u_{m}\}$ actually converges as $m \rightarrow +\infty$. This relies on a uniform bound on $\{u_{m}\}$ when $p<4$. 

However, this construction does not extend to the critical case $p=4$, as there is no positive power of $T$ available to compensate the largeness of $m^4$ (unless $m$ itself is very small, in which case does not relate to the original problem any more).

\subsection{Overview of construction}
\label{sec:strategy}

The starting point of our construction is the existence of the solution to the truncated subcritical problem from \cite{BD}. We then obtain uniform bounds on these solutions that allow us to reach criticality (sending $p \rightarrow 4$) and remove the truncation (sending $m \rightarrow +\infty$). 

To be precise, we let $\theta: \RR \rightarrow \RR^{+}$ be a smooth function with compact support in $(-2,2)$ and $\theta = 1$ on $[-1,1]$. For every $m>0$, let $\theta_{m}(x) = \theta(x/m)$. Consider the truncated sub-critical equation
\begin{equation} \label{eq:trun_sub}
i \partial_t u_{m,\eps}+\Delta u_{m,\eps} = \theta_{m} \big(\|u_{m,\eps}\|_{\xX_{2}(0,t)} \big) \nN^{\eps}(\uem) + u_{m,\eps} \circ \frac{{\rm d} W}{{\rm d} t}, \quad u_0 \in L_{\omega}^{\infty}L_{x}^{2}, 
\end{equation}
where $\nN^{\eps}(u) = |u|^{4-\eps}u$. Here and throughout the rest of the article, we use $\eps$ to denote $4-p$. The following global existence theorem is contained in \cite{BD}. 

\begin{thm}\label{th:global_trun}
	Let $\|u_0\|_{L_{\omega}^{\infty}L_{x}^{2}} < +\infty$. For every $m,\eps>0$ and every sufficiently large $\rho$, there exists $T = T(m, \eps, \rho, \|u_0\|_{L_{\omega}^{\infty}L_{x}^{2}})$ such that the equation \eqref{eq:trun_sub} has a unique solution $\uem$ in $L_{\omega}^{\rho}\xX(0,T)$. It satisfies the Duhamel formula
	\begin{equation} \label{eq:duhamel_trun_sub}
	\begin{split}
	u_{m,\eps}(t) &= \sS(t) u_0 - i \int_{0}^{t} \sS(t-s) \Big( \theta_{m} \big(\|\uem\|_{\xX_{2}(0,s)} \big) \nN^{\eps}(\uem(s))\Big) {\rm d} s\\
	&- i \int_{0}^{t} \sS(t-s) \uem(s) {\rm d} W_s - \frac{1}{2} \int_{0}^{t} \sS(t-s) \big(F_{\Phi} \uem(s) \big) {\rm d} s
	\end{split}
	\end{equation}
	in $L_{\omega}^{\rho} \xX(0,T)$. Furthermore, we have pathwise mass conservation in the sense that  
	\begin{equation}
	\|u_{m,\eps}(t)\|_{L_{x}^{2}} = \|u_0\|_{L_{x}^{2}}, \qquad \forall t \in [0,T]
	\end{equation}
	almost surely. As a consequence, one can iterate the construction to get a global in time flow $\uem$.
\end{thm}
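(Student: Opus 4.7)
The plan is to construct $\uem$ by a Banach fixed-point argument in the space $L_\omega^\rho \xX(0,T)$ of adapted processes, on a deterministic time $T = T(m,\eps,\rho,\|u_0\|_{L_\omega^\infty L_x^2})$ chosen small enough, and then to iterate pathwise using mass conservation to reach all of $[0,\infty)$.

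First I would define the Duhamel map $\Gamma_{m,\eps}(v)$ as the right-hand side of \eqref{eq:duhamel_trun_sub} with $v$ in place of $\uem$, and show that it contracts a closed ball in $L_\omega^\rho \xX(0,T)$ of adapted processes. The required bounds split into three pieces. For the stochastic convolution $\int_0^t \sS(t-s) v\, dW_s$, I would combine the Burkholder--Davis--Gundy inequality with Strichartz and Assumption~\ref{as:Phi} to obtain a bound of order $C_\Phi\, T^{1/2}\|v\|_{L_\omega^\rho \xX(0,T)}$; the It\^{o}--Stratonovich corrector $\int_0^t \sS(t-s) F_\Phi v\, ds$ contributes $O(T)$ via Strichartz and the $W^{1,p}$ bound on $F_\Phi$. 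The key estimate is for the nonlinear term: by dual Strichartz with the admissible pair $(5,10)$ and H\"{o}lder in space, placing one factor of $v$ in $L_x^2$ and the remaining $4-\eps$ in $L_x^{10(4-\eps)/4}$, then extracting a factor $T^{c\eps}$ with $c(\eps)>0$ from H\"older in time, one gets
\begin{equation*}
\Big\|\int_0^t \sS(t-s)\, \theta_m(\|v\|_{\xX_2(0,s)}) \nN^\eps(v)\, ds\Big\|_{\xX(0,T)} \lesssim T^{c\eps}\, \|v\|_{\xX_2(0,T)}^{4-\eps}\,\|v\|_{\xX_1(0,T)}.
\end{equation*}
Because $\theta_m$ vanishes once $\|v\|_{\xX_2(0,s)}$ exceeds $2m$, the factor $\|v\|_{\xX_2}^{4-\eps}$ is effectively bounded by $(2m)^{4-\eps}$, which pulls out of the $L^\rho_\omega$ norm. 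Collecting all four contributions,
\begin{equation*}
\|\Gamma_{m,\eps}(v)\|_{L_\omega^\rho \xX(0,T)} \lesssim \|u_0\|_{L_\omega^\rho L_x^2} + \bigl(T^{1/2} C_\Phi + T + T^{c\eps} m^{4-\eps}\bigr)\, \|v\|_{L_\omega^\rho \xX(0,T)},
\end{equation*}
with an analogous Lipschitz bound for $\Gamma_{m,\eps}(v) - \Gamma_{m,\eps}(w)$ (using that $\theta_m$ is $O(1/m)$-Lipschitz and the pointwise inequality for the difference of the $|v|^{4-\eps}v$'s). Choosing $T$ so that the parenthesised factor is below a fixed small threshold closes the contraction and produces the unique solution, which by construction satisfies \eqref{eq:duhamel_trun_sub}.

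To obtain pathwise mass conservation I would apply It\^{o}'s formula to $\|\uem(t)\|_{L_x^2}^2$. The nonlinear term $\theta_m |\uem|^{4-\eps}\uem$ is a real scalar times $\uem$, hence contributes $0$ to $\Re\langle i^{-1}\cdot,\uem\rangle$; the Stratonovich form of the noise is precisely the combination that cancels the It\^{o} corrector $-\tfrac{1}{2} F_\Phi u$ against the quadratic variation of $-i u\, dW$. Therefore $\|\uem(t)\|_{L_x^2}^2 = \|u_0\|_{L_x^2}^2$ almost surely. Since the local existence time $T$ depends only on $\|u_0\|_{L_\omega^\infty L_x^2}$ together with $m,\eps,\rho$, and the former is preserved by the pathwise conservation law, one iterates the local construction on successive intervals $[kT,(k+1)T]$ to reach arbitrarily large times.

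The main obstacle I anticipate is controlling the stochastic convolution in the mixed Strichartz norm $\xX_2 = L_t^5 L_x^{10}$: the classical Burkholder inequality operates at a single $L_t^q L_x^r$ scale, so one needs the appropriate Strichartz-type maximal inequality for stochastic convolutions on the relevant Banach space, which relies on the $\gamma$-radonifying property of $\Phi$ inherited from Assumption~\ref{as:Phi}. Once this technology is in place (as in \cite{BD}), the remainder of the argument is the standard subcritical Picard scheme combined with the truncation.
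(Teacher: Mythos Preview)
Your proposal is correct and matches the paper's approach: the paper does not give a detailed proof but simply states that the result follows from standard Picard iteration based on Strichartz estimates and the Burkholder inequality, referring to \cite[Proposition~3.1]{BD}, and your sketch reproduces precisely that argument (the paper's Proposition~\ref{pr:pre_nonlinear} gives the explicit exponent $c\eps = \eps/4$ in your nonlinear estimate). The one point the paper singles out in a remark, which you handle implicitly through the bound $(2m)^{4-\eps}$ but could state explicitly, is that the truncation $\theta_m(\|u\|_{\xX_2(0,s)})$ carries the full history from time $0$, so the equation is not translation-invariant; it is the translation-invariance of the \emph{estimate} $C T^{\eps/4} m^{4}\|u\|_{\xX}$ that allows the iteration over successive intervals.
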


\begin{remark}
	Strictly speaking, due to the truncation in the nonlinearity, the equation \eqref{eq:trun_sub} is \textit{not} translation invariant in time. Thus, it is not immediately obvious that a local theory with mass conservation can imply global existence. However, the nonlinearity satisfies a bound of the form
	\begin{equation*}
	\Big\|\int_{0}^{t}\theta_{m} \big(\|\uem\|_{\xX_{2}(0,s)} \big) \nN^{\eps}\big(\uem(s)\big) {\rm d}s \Big\|_{\xX(0,T_0)} \leq C T_{0}^{\frac{\eps}{4}} m^{4}\|\uem\|_{\xX(0,T_0)}. 
	\end{equation*}
	The right hand side above is translation invariant in time, and hence one can iterate the local construction to get a global flow.
\end{remark}

Theorem \ref{th:global_trun} follows from standard Picard iteration (which is purely perturbative) and relies essentially on Strichartz estimates and Burkholder inequality. We refer to \cite[Proposition 3.1]{BD} for more details of the proof. The main result of \cite{BD}, however, is a uniform bound for the sequence $\{u_{m,\eps}\}_{m}$ for every fixed $\eps>0$. This allows the authors to show the convergence of $u_{m,\eps}$ to a limit $u_{\infty,\eps}$, and that the limit solves the corresponding subcritical equation without truncation. 

On the other hand, in order to obtain a solution for the critical problem \eqref{eq:main_eq}, we need to send $\eps \rightarrow 0$ and $m \rightarrow +\infty$. Instead of starting from the subcritical solution $u_{\infty,\eps}$ directly, we fix $m$ and send $\eps \rightarrow 0$ first. 

\begin{prop}\label{pr:converge_e}
	Let $\uem$ be the solution to \eqref{eq:trun_sub} with initial data $u_0$, so that it satisfies the Duhamel's formula \eqref{eq:duhamel_trun_sub}. There exists $\delta_0>0$ and $T_0$ depending on $\delta_0$ only (independent of $m$ and $\eps$) such that if
	\begin{equation*}
	\|u_0\|_{L_{\omega}^{\infty}L_{x}^{2}} \leq \delta_0, 
	\end{equation*}
	then for every $m>0$ and $\rho_0 \geq 5$, the sequence $\{u_{m,\eps}\}$ converges to $u_{m}$ in $L_{\omega}^{\rho_0}\xX(0,T_0)$ as $\eps \rightarrow 0$. Furthermore, the limit $u_{m}$ satisfies the Duhamels' formula
	\begin{equation} \label{eq:trun_critical}
	\begin{split}
	u_{m}(t) &= \sS(t) u_0 - i \int_{0}^{t} \sS(t-s) \Big( \theta_{m}\big(\|u_{m}\|_{\xX_{2}(0,t)}\big) \nN\big(u_{m}(s)\big)\Big) {\rm d}s\\
	&- i \int_{0}^{t} \sS(t-s) u_{m}(s) {\rm d} W_s - \frac{1}{2} \int_{0}^{t} \sS(t-s) \big( F_{\Phi} u_{m}(s) \big) {\rm d}s
	\end{split}
	\end{equation}
	in $L_{\omega}^{\rho_0}\xX(0,T_0)$. 
\end{prop}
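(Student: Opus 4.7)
My plan is to obtain a uniform-in-$(m,\eps)$ bound on $\uem$ in a sufficiently high $L^{\rho}_\omega \xX(0,T_0)$-norm over a short interval whose length depends only on $\delta_0$, then for each fixed $m$ show that $\{\uem\}_\eps$ is Cauchy in $L^{\rho_0}_\omega \xX(0,T_0)$ as $\eps \to 0$, and finally pass to the limit in each term of \eqref{eq:duhamel_trun_sub}.

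For the uniform bound, I would run a bootstrap on $\|\uem\|_{L^{\rho}_\omega \xX(0,T)}$ with $\rho$ chosen larger than $5\rho_0$ so that fifth-power estimates close at the level $L^{\rho_0}_\omega$. Applying Strichartz to \eqref{eq:duhamel_trun_sub}, the linear term contributes $C\|u_0\|_{L^{\rho}_\omega L^2_x}$, which is finite thanks to Assumption on $u_0 \in L^\infty_\omega L^2_x$. For the nonlinear term, using $0 \leq \theta_m \leq 1$ and H\"older in time,
\begin{equation*}
\bigl\|\theta_{m}(\|\uem\|_{\xX_2(0,\cdot)})\,\nN^{\eps}(\uem)\bigr\|_{L^{5/4}_t L^{10/9}_x} \leq T^{\eps/4}\,\|\uem\|_{\xX_2(0,T)}^{5-\eps},
\end{equation*}
whose $L^{\rho_0}_\omega$-norm is controlled by $T^{\eps/4}\|\uem\|_{L^{(5-\eps)\rho_0}_\omega \xX_2}^{5-\eps}$. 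Since the truncation threshold $m$ plays no role here (we use only the raw fifth-power bound), for small $\delta_0$ this term is self-improving. The It\^o--Stratonovich correction contributes $O(T)$ via $\|F_{\Phi}\|_{L^\infty_x}<\infty$, while the stochastic integral contributes $O(\sqrt{T})$ by Strichartz combined with Burkholder. Taking $T_0$ small depending only on $\delta_0$ closes the bootstrap at $\|\uem\|_{L^{\rho}_\omega \xX(0,T_0)} \lesssim \|u_0\|_{L^{\rho}_\omega L^2_x}$, uniformly in $m$ and $\eps$.

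For the Cauchy property with fixed $m$, set $D=u_{m,\eps_1}-u_{m,\eps_2}$ and split the nonlinear difference in the Duhamel formula as
\begin{equation*}
\theta_m^1\bigl[\nN^{\eps_1}(u_{m,\eps_1})-\nN^{\eps_1}(u_{m,\eps_2})\bigr]+\theta_m^1\bigl[\nN^{\eps_1}(u_{m,\eps_2})-\nN^{\eps_2}(u_{m,\eps_2})\bigr]+(\theta_m^1-\theta_m^2)\nN^{\eps_2}(u_{m,\eps_2}),
\end{equation*}
with $\theta_m^i=\theta_m(\|u_{m,\eps_i}\|_{\xX_2(0,\cdot)})$. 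The first term is Lipschitz-like in $u$ and contributes $\|D\|_{L^{\rho_0}_\omega \xX}$ times a small factor from shrinking $T_0$. The third exploits that $\theta_m$ is Lipschitz with constant $1/m$, yielding another absorbable $\|D\|$-term. The delicate middle term uses
\begin{equation*}
\nN^{\eps_1}(u)-\nN^{\eps_2}(u)=-\int_{\eps_2}^{\eps_1}\log|u|\cdot|u|^{4-\eta}u\,d\eta,
\end{equation*}
combined with the elementary bound $|\log|u|| \leq |u|^{\alpha}+|u|^{-\alpha}$ for any small $\alpha>0$, to produce the pointwise estimate $|\eps_1-\eps_2|\,(|u|^{5-\eta-\alpha}+|u|^{5-\eta+\alpha})$. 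Interpolating between the pathwise mass bound $\|u_{m,\eps_i}\|_{L^\infty_t L^2_x} = \|u_0\|_{L^2_x}$ and the truncation bound $\|u_{m,\eps_i}\|_{\xX_2} \leq 2m$ controls these slightly-off-critical mixed norms by a constant $C(m,\|u_0\|,T_0)$, so the middle term is $O(|\eps_1-\eps_2|)$. The stochastic contributions to $D$ are handled by standard Burkholder plus Strichartz, giving $O(\sqrt{T_0})\|D\|$. Altogether $\|D\|_{L^{\rho_0}_\omega \xX(0,T_0)} \to 0$, yielding the Cauchy property.

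Passing to the limit in \eqref{eq:duhamel_trun_sub} is then routine: linear-in-$u$ terms converge immediately, the stochastic integral converges by It\^o isometry (or Burkholder for $\rho_0>2$), and the nonlinear term converges by the same three-term splitting used in the Cauchy step. The main obstacle I expect lies in the middle term of that splitting: the logarithmic factor arising from differentiating $|u|^{4-\eps}$ in $\eps$ forces one slightly away from the critical exponent $5$, and the resulting mixed-norm estimates depend on $m$ through the $\xX_2$-interpolation. This $m$-dependence is acceptable (we only need Cauchy-ness for each fixed $m$), but verifying that the interpolation exponents $\alpha$ can be chosen so that both $5-\eta\pm\alpha$ land in an admissible range for the available pathwise controls requires careful bookkeeping.
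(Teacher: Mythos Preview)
Your overall architecture (uniform bound, then Cauchy in $\eps$, then pass to the limit) matches the paper, and your treatment of the Lipschitz-in-$u$ term, the $\theta_m$-difference term, the stochastic integral, and the correction term is fine. The gap is exactly where you suspect it: the ``middle term'' $\theta_m^1[\nN^{\eps_1}(u_{m,\eps_2})-\nN^{\eps_2}(u_{m,\eps_2})]$ cannot be closed with the interpolation you propose.

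The obstruction is scaling, not bookkeeping. Under the mass-critical scaling $u_\lambda(t,x)=\lambda^{1/2}u(\lambda^2 t,\lambda x)$ both norms $\|u\|_{\xX_1}$ and $\|u\|_{\xX_2}$ are invariant, while for any admissible pair $(q,r)$ one has $\||u_\lambda|^{p}\|_{L^{q'}_tL^{r'}_x}=\lambda^{(p-5)/2}\||u|^{p}\|_{L^{q'}_tL^{r'}_x}$. Hence any estimate of $\||u|^{p}\|_{L^{q'}_tL^{r'}_x}$ in terms of $\|u\|_{\xX_1}$ and $\|u\|_{\xX_2}$ forces $p=5$. Your bound $|\log|u||\le C_\alpha(|u|^{\alpha}+|u|^{-\alpha})$ produces the exponent $5-\eta+\alpha$, which is supercritical once $\eta<\alpha$; and the remedy of letting $\alpha=\alpha(\eta)\downarrow 0$ fails because $C_\alpha\sim 1/\alpha$, so after integrating in $\eta$ you pick up $\int_{\eps_2}^{\eps_1}\eta^{-1}\,d\eta=\log(\eps_1/\eps_2)$, which does not tend to zero as $\eps_1,\eps_2\to 0$. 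The truncation bound $\|u_{m,\eps_i}\|_{\xX_2}\le 2m$ gives no extra room: it is still only $L^5_tL^{10}_x$ information, and the concrete computation with the endpoint dual pair $(4/3,1)$ shows one would need $u\in L^{5+c}_tL^{10}_x$, which is unavailable.

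The paper gets around this by introducing $L^\infty_x$ control, which \emph{does} break the scaling. Concretely: perturb $u_0$ to $v_0\in L^\infty_\omega H^1_x$ with $\|u_0-v_0\|_{L^{\rho_0}_\omega L^2_x}$ small; prove persistence of regularity so that $v_{m,\eps}\in L^{\rho_0}_\omega\xX^1(0,T_0)$ uniformly in $\eps$; use the one-dimensional embedding $H^1_x\hookrightarrow L^\infty_x$ to bound $|\nN^{\eps_1}(v_{m,\eps_2})-\nN^{\eps_2}(v_{m,\eps_2})|\lesssim \Lambda^4|\eps_1-\eps_2|$ on the event $\{\|v_{m,\eps_i}\|_{L^\infty_tH^1_x}\le\Lambda\}$, whose complement has small probability; and finally invoke a uniform-in-$\eps$ stability estimate (your first and third terms, essentially) to transfer the conclusion back from $v_{m,\eps}$ to $u_{m,\eps}$. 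The $m$-dependence you were willing to accept enters through the persistence-of-regularity and stability constants, which is harmless at this stage.
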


Our next step is to show that the sequence $\{u_{m}\}$ also converges to a limit in $L_{\omega}^{\rho_0} \xX(0,T_0)$. This requires the following uniform bound. 

\begin{prop} \label{pr:uniform_e_m}
	Let $u_{m,\eps}$ be the solution to \eqref{eq:trun_sub}. There exists $\delta_0 > 0$ and $T_0>0$ depending on $\delta_0$ only such that if
	\begin{equation*}
	\|u_0\|_{L_{\omega}^{\infty} L_{x}^{2}} \leq \delta_{0}, 
	\end{equation*}
	then for every $\rho \geq 5$, there exists $C=C_{\rho}$ such that
	\begin{equation*}
	\|\uem\|_{L_{\omega}^{\rho}\xX(0,T_0)} \leq C \|u_0\|_{L_{\omega}^{\rho}L_{x}^{2}}. 
	\end{equation*}
	The constant $C$ depends on $\rho$ but is independent of $\eps$ and $m$. 
\end{prop}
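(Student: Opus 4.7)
The plan is to establish the a priori bound via a bootstrap argument applied to the Duhamel formula \eqref{eq:duhamel_trun_sub}. Taking the $L_\omega^\rho \xX(0, T)$ norm and applying Strichartz to the deterministic pieces and the Burkholder--Davis--Gundy inequality to the stochastic integral, the linear piece $\sS(t) u_0$ contributes $C\|u_0\|_{L_\omega^\rho L_x^2}$ uniformly in $T$. The key structural observation is that the It\^o-Stratonovich correction and the stochastic integral are both linear in $u_{m,\eps}$ and see it only through the $L_x^2$-norm; by pathwise mass conservation $\|u_{m,\eps}(t)\|_{L_x^2} = \|u_0\|_{L_x^2}$ from Theorem~\ref{th:global_trun}, these pieces are bounded directly by $(CT_0 + C_\rho T_0^{1/2})\|u_0\|_{L_\omega^\rho L_x^2}$, \emph{without} having to be absorbed into the target norm. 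The BDG constant $C_\rho$ is allowed to depend on $\rho$, consistent with the conclusion.

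For the nonlinear piece, an $L^2$--$L^{10}$ interpolation yields the pathwise estimate
\begin{equation*}
\|\theta_m(\|u_{m,\eps}\|_{\xX_2(0,\cdot)}) |u_{m,\eps}|^{4-\eps} u_{m,\eps}\|_{L_t^1 L_x^2(0,T)} \leq C T^{\eps/4} \|u_{m,\eps}\|_{L_t^\infty L_x^2}^{\eps/4} \|u_{m,\eps}\|_{\xX_2(0,T)}^{5-5\eps/4},
\end{equation*}
valid uniformly in $\eps \in [0, 4]$, and mass conservation bounds the $L_t^\infty L_x^2$ factor pathwise by $\delta_0$. I would then factor $\|u_{m,\eps}\|_{\xX_2}^{5-5\eps/4} = \|u_{m,\eps}\|_{\xX_2}^{4-5\eps/4}\cdot \|u_{m,\eps}\|_{\xX_2}$ and obtain pathwise control of the first factor via a stopping time $\tau := \inf\{t \in [0, T_0]: \|u_{m,\eps}\|_{\xX_2(0,t)} \geq K_0\}$, where $K_0$ is a suitable multiple of $\delta_0$ chosen independent of $m, \eps, \rho$. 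On $[0, T \wedge \tau]$ the nonlinear contribution becomes linear in $\|u_{m,\eps}\|_{L_\omega^\rho \xX_2}$ with coefficient $\lesssim \delta_0^{4-\eps}$, which can be absorbed into the left-hand side for $\delta_0$ small. This yields the stopped estimate $\|u_{m,\eps}\|_{L_\omega^\rho \xX(0, T_0 \wedge \tau)} \leq C_\rho\|u_0\|_{L_\omega^\rho L_x^2}$.

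Finally, to upgrade from $T_0 \wedge \tau$ to $T_0$, I would show $\tau = T_0$ almost surely via continuity of $t \mapsto \|u_{m,\eps}\|_{\xX_2(0,t)}$ combined with the stopped bound at sufficiently high moments $\rho$ (Markov-type reasoning applied to the event $\{\|u_{m,\eps}\|_{\xX_2(0,\tau)} = K_0\}$). The main obstacle throughout is the $L_\omega^\rho$-versus-$L_\omega^{(5-5\eps/4)\rho}$ discrepancy one naively encounters when taking the $L_\omega^\rho$-norm of $\|u_{m,\eps}\|_{\xX_2}^{5-5\eps/4}$: a direct H\"older application would force higher-moment bootstrap over an unbounded hierarchy that does not close at any fixed $\rho$. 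The stopping-time argument is designed precisely to sidestep this by trading higher-moment integrability for pathwise $\xX_2$-control on a suitable interval, reducing the nonlinear estimate to one that is linear in the same $L_\omega^\rho$-norm that appears on the left.
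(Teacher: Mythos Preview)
Your diagnosis of the main obstacle---the naive $L_\omega^\rho$ versus $L_\omega^{(5-\eps)\rho}$ mismatch from the nonlinearity---is correct, and so is the general strategy of trading moment control for pathwise control. The stopped estimate on $[0,T_0\wedge\tau]$ is fine. But the final step, showing $\tau=T_0$ almost surely, is a genuine gap: the claim is simply false. The stochastic integral $\int_0^{t}\sS(t-s)u_{m,\eps}(s)\,dW_s$ has all finite $\xX_2$-moments but no $L_\omega^\infty$ bound; on the positive-probability event where its $\xX_2(0,T_0)$-norm is large, the Duhamel formula forces $\|u_{m,\eps}\|_{\xX_2(0,\cdot)}$ to exceed any fixed threshold $K_0$ before time $T_0$, no matter how small $\delta_0$ is. Your Markov argument cannot repair this: from the stopped bound one gets $\Pr(\tau<T_0)\leq (C_\rho\delta_0/K_0)^\rho$, but the BDG constant $C_\rho$ grows with $\rho$ while $K_0$ is fixed, so the right-hand side does not tend to zero as $\rho\to\infty$. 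Iterating your stopping time does not help either, since the number of iterations would be controlled by $\|u_{m,\eps}\|_{\xX_2(0,T_0)}^5$, which is exactly the quantity you are trying to bound.

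The paper's proof implements the same pathwise-control idea but applies it to the \emph{stochastic term} rather than to the solution. It introduces the maximal quantity $\qQ^*(t)=\sup_{r_1\leq r_2\leq t}\bigl\|\int_{r_1}^{r_2}\sS(t-s)u_{m,\eps}\,dW_s\bigr\|_{L_x^{10}}$ and takes a random dissection $0=\tau_0<\tau_1<\cdots<\tau_K=T_0$ with $\int_{\tau_k}^{\tau_{k+1}}|\qQ^*(t)|^5\,dt\leq 1$ on each piece. On each subinterval the stochastic contribution is then pathwise $\leq 1$ by construction, so a deterministic continuity argument (using the factor $\delta_0$ multiplying $\|u\|_{\xX_2}^{4-\eps}$ in the nonlinear estimate) gives $\|u_{m,\eps}\|_{\xX_2(\tau_k,\tau_{k+1})}\leq C$ pathwise. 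The number $K$ of subintervals is random but bounded by $2\|\qQ^*\|_{\xX_2(0,T_0)}^5$, and the moments of $\qQ^*$ are controlled \emph{a priori} by $\|u_0\|_{L_\omega^\rho L_x^2}$ via mass conservation and Proposition~\ref{pr:pre_stochastic}. Summing over $k$ and taking $L_\omega^\rho$ only at the very end closes the estimate. The decisive structural point is that the dissection is driven by the stochastic term---whose moments are known in advance---rather than by the solution, which is what breaks the circularity your single stopping time runs into.
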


In order to show the convergence of $\{u_{m}\}$ in $L_{\omega}^{\rho_0}\xX(0,T_0)$ as $m \rightarrow +\infty$, it is essential that the uniform bound above holds with a strictly larger $\rho$ (we need $\rho>4\rho_0$ in our case). Now, with the help of Proposition~\ref{pr:uniform_e_m}, we can prove that $\{u_{m}\}$ is Cauchy in $L_{\omega}^{\rho_0}\xX(0,T_0)$, and the limit $u$ satisfies the Duhamel formula \eqref{eq:duhamel_main} in the same space. This concludes the proof of Theorem~\ref{th:main}. 

\begin{rmk}
	The proof of Proposition~\ref{pr:converge_e} does not require the initial data to be small. In fact, if we allow the bounds to depend on $m$, then we can start with any initial data with finite $L_{\omega}^{\infty} L_{x}^{2}$ norm. It is the uniform (in both $\eps$ and $m$) bound in Proposition~\ref{pr:uniform_e_m} and convergence of $\{u_{m}\}$ that rely crucially on the smallness of $\delta_0$. 
	
	On the other hand, in order to make the assumptions consistent in this article, we assume smallness of $\delta_0$ throughout this article, including the part for Proposition~\ref{pr:converge_e}. Its proof will also be simplified with the help of Proposition~\ref{pr:uniform_e_m}. Hence, in what follows, we will prove the uniform bound in Proposition~\ref{pr:uniform_e_m} first, and then turn to the convergence of $\{u_{m,\eps}\}_{\eps>0}$ for fixed $m$. 
\end{rmk}

\begin{rmk}
	As mentioned earlier, the key point in \cite{BD} is  fixing $\eps$, and show uniform boundedness for $\uem$ in $m$. We remark that they can handle general $L^{2}$ data without smallness requirement. 
\end{rmk}

\begin{rmk}
	We finally remark that it might be possible to use rough path theory and regularity structures developed in \cite{rp, controlled_rp, rs} to develop a pathwise solution theory to \eqref{eq:main_eq}. This would avoid the problem of the stochastic integrability, and may give a direct construction via a fixed point argument. We plan to investigate this issue in future work.
\end{rmk}

\subsection*{Structure of the article}

The rest of the article is organised as follows. We first give some preliminary lemmas in Section~\ref{sec:preliminary}. In Section~\ref{sec:uniform}, we give a few bounds that will be used throughout the rest of the article, and also prove the uniform boundedness of the truncated subcritical solutions. Section~\ref{sec:converge_e} is the most technical part of the article. It gives the convergence in of subcritical solutions to the critical one with the truncation present. Finally, in Section~\ref{sec:converge_m}, we show that the truncation can be removed, thus yielding a construction of the solution to \eqref{eq:main_eq} and obtaining its stability.

\subsection*{Notations}

We now introduce the notations used in this article. For any interval $\iI$, we use $L_{t}^{q} L_{x}^{r} (a,b)$ to denote the space $L^{q}(\iI, L^{r}(\RR))$, and we also write $L_{\omega}^{\rho} \yY = L^{\rho}(\Omega, \yY)$. We fix the spaces $\xX_1$ and $\xX_2$ to be
\begin{equation*}
\xX_1 (\iI) = L_{t}^{\infty} L_{x}^{2}(\iI), \qquad \xX_{2}(\iI) = L_{t}^{5} L_{x}^{10}(\iI), 
\end{equation*}
and $\xX(\iI) = \xX_{1}(\iI) \cap \xX_{2}(\iI)$ with the norm being their sum. Some intermediate steps in the proof require us to go to a higher regularity space than $L_{x}^{2}$, so we let $\xX^{1}(\iI)$ be the space of functions such that
\begin{equation*}
\|u\|_{\xX^{1}(\iI)} \eqdef \|u\|_{\xX(\iI)} + \|\d_x u\|_{\xX(\iI)} < +\infty. 
\end{equation*}
Throughout this article, we fix a universal small number $\delta_0 > 0$ and $T_0 > 0$ depending on $\delta_0$ so that Proposition~\ref{pr:uniform_e_m} holds. The smallness of $\delta_0$ will be specified later. We also fix an arbitrary $\rho_0 > 5$. Since these values are fixed, we will omit the dependence of the estimates below on them. 

We also write $\nN(u) = |u|^{4} u$ and $\nN^{\eps}(u) = |u|^{4-\eps} u$. Finally, we fix $\theta$ to be a non-negative smooth function on $\RR$ with compact support in $(-2,2)$ such that $\theta(x) = 1$ for $|x| \leq 1$. For every $m>0$, we let $\theta_{m}(x) = \theta(x/m)$.

\subsection*{Acknowledgements}
We thank Carlos Kenig and Gigliola Staffilani for helpful discussions and comments. This work was initiated during the Fall 2015 program ``New Challenges in PDE: Deterministic
Dynamics and Randomness in High and Infinite Dimensional Systems" held at MSIR, Berkeley. We thank for MSRI for providing a stimulating mathematical environment. 

Part of this work was done when CF was a graduate student in MIT. CF was partially supported by NSF DMS 1362509 and DMS 1462401. CF would also like to thank Chuntian Wang and Jie Zhong for discussions. WX acknowledges the support from the Engineering and Physical Sciences Research Council through the fellowship EP/N021568/1. 

\section{Preliminaries}\label{sec:preliminary}

\subsection{The Wiener process and Burkholder inequality}

Our main assumption on the noise $W$ is that it can be written as $W = \Phi \tilde{W}$, where $\tilde{W}$ is the cylindrical Wiener process on $L^2(\RR)$, and $\Phi$ is a trace-class operator satisfying Assumption~\ref{as:Phi}. 

We now introduce the notion of $\gamma$-radonifying operators since the Burkholder inequality we use below is most conveniently expressed with this notion. A linear operator $\Gamma: \bB \rightarrow \tilde{\hH}$ from a Banach space $\bB$ to a Hilbert space $\tilde{\hH}$ is $\gamma$-radonifying if for any sequence $\{\gamma_k\}_k$ of independent standard normal random variables on a probability space $(\tilde{\Omega}, \tilde{\fF}, \tilde{\PP})$ and any orthonormal basis $\{e_k\}_k$ of $\tilde{\hH}$, the series $\sum_k \gamma_k \Gamma e_k$ converges in $L^2(\tilde{\Omega}, \bB)$. The $\gamma$-radonifying norm of the operator $\Gamma$ is then defined by
\begin{equation*}
\|\Gamma\|_{\R(\bB, \tilde{H})} = \Big( \EE \|\sum_k \gamma_k \Gamma e_k\|_{\bB}^{2} \Big)^{\frac{1}{2}}, 
\end{equation*}
which is independent of the choice of $\{\gamma_k\}$ or $\{e_k\}$. We then have
\begin{equation*}
\|\Phi\|_{\R(L_{x}^{2}, \hH)} < +\infty
\end{equation*}
for the Hilbert space $\hH$ specified in Assumption~\ref{as:Phi}. We also need the following factorisation lemma. 

\begin{lem} \label{le:factorisation}
	Let $\kK$ be a Hilbert space, and $\eE$ and $\bB$ be Banach spaces. For every $\Gamma \in \R(\kK,\eE)$ and $\tT \in \LLL(\eE,\bB)$, we have $\tT \circ \Gamma \in \R(\kK,\bB)$ with the bound
	\begin{equation*}
	\|\tT \circ \Gamma\|_{\R(\kK,\bB)} \leq \|\tT\|_{\LLL(\eE,\bB)} \|\Gamma\|_{\R(\kK,\eE)}. 
	\end{equation*}
	In particular, if $\eE = L^p$ and $\tT$ is given by the multiplication of an $L^q$ function $\sigma$ with $\frac{1}{p} + \frac{1}{q} = \frac{1}{r} \leq 1$, then $\sigma \Gamma \in \R(\kK,L^r)$ with
	\begin{equation*}
	\|\sigma \Gamma\|_{\R(\kK,L^r)} \leq \|\sigma\|_{L^q} \|\Gamma\|_{\R(\kK,L^p)}. 
	\end{equation*}
\end{lem}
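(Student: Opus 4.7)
\medskip

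The plan is to derive this directly from the definition of the $\gamma$-radonifying norm, which is essentially an $L^2(\tilde\Omega,\eE)$-norm of a Gaussian series. The bounded linear operator $\tT$ then passes through this $L^2$-norm with a constant given by its operator norm.

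More precisely, I would fix an orthonormal basis $\{e_k\}_k$ of $\kK$ and a sequence $\{\gamma_k\}_k$ of i.i.d.\ standard normals on some $(\tilde\Omega,\tilde\fF,\tilde\PP)$. By hypothesis $\Gamma \in \R(\kK,\eE)$, the partial sums $S_N \eqdef \sum_{k=1}^{N}\gamma_k\,\Gamma e_k$ converge in $L^2(\tilde\Omega,\eE)$ to some random variable $S$ with $\|S\|_{L^2(\tilde\Omega,\eE)} = \|\Gamma\|_{\R(\kK,\eE)}$. Since $\tT \in \LLL(\eE,\bB)$ is continuous and linear, it commutes with finite sums and with $L^2(\tilde\Omega,\cdot)$-limits, so $\tT S_N = \sum_{k=1}^{N} \gamma_k\,(\tT\circ\Gamma)e_k$ converges in $L^2(\tilde\Omega,\bB)$ to $\tT S$. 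This already establishes that $\tT\circ\Gamma \in \R(\kK,\bB)$, and applying the pointwise bound $\|\tT S\|_\bB \leq \|\tT\|_{\LLL(\eE,\bB)}\|S\|_\eE$ inside the $L^2(\tilde\Omega)$ norm gives
\begin{equation*}
\|\tT\circ\Gamma\|_{\R(\kK,\bB)} = \|\tT S\|_{L^2(\tilde\Omega,\bB)} \leq \|\tT\|_{\LLL(\eE,\bB)}\,\|S\|_{L^2(\tilde\Omega,\eE)} = \|\tT\|_{\LLL(\eE,\bB)}\,\|\Gamma\|_{\R(\kK,\eE)},
\end{equation*}
which is the first inequality.

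For the second statement, I would simply verify that multiplication by $\sigma$ defines a bounded linear operator $\tT: L^p \to L^r$ when $\tfrac1p + \tfrac1q = \tfrac1r \leq 1$. This is immediate from H\"older's inequality: $\|\sigma f\|_{L^r} \leq \|\sigma\|_{L^q}\|f\|_{L^p}$, so $\|\tT\|_{\LLL(L^p,L^r)} \leq \|\sigma\|_{L^q}$. Plugging into the first inequality with $\eE = L^p$ and $\bB = L^r$ yields the claimed bound.

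There is essentially no real obstacle here; the statement is the routine ``ideal property'' of the $\gamma$-radonifying norm, and the only substantive points are the justification that $\tT$ commutes with the $L^2(\tilde\Omega,\eE)$-limit (which follows from continuity) and the observation that the $\gamma$-radonifying norm is independent of the choice of basis and of the particular Gaussian sequence, so the inequality obtained for one choice automatically gives the result.
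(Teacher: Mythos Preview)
Your proposal is correct and essentially matches the paper's approach: the paper simply cites \cite[Lemma~2.1]{BD} for the first inequality and invokes H\"older for the second, while you spell out the standard argument behind that citation. There is nothing to add.
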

\begin{proof}
	The first claim is same as \cite[Lemma 2.1]{BD}. The second claim then follows immediately from the first one and H\"{o}lder's inequality. 
\end{proof}

The Burkholder inequality (\cite{BDG, Burkholder}) is very useful in controlling moments of the supremum of a martingale. We will make use of the following version. 

\begin{prop} \label{pr:Burkholder}
	Let $W = \Phi \tilde{W}$ with $\tilde{W}$ and $\Phi$ be as described above. Let $\sigma$ be adapted to $\fF_t$. Then, for every $p \in [2,\infty)$ and every $\rho \in [1,\infty)$, we have
	\begin{equation*}
	\EE \sup_{t \in [a,b]} \Big\| \int_{a}^{t} \sigma(s) {\rm d} W_s \Big\|_{L^p}^{\rho} \leq C_{p,\rho} \EE \Big( \int_{0}^{T} \|\sigma(s) \Phi\|_{\R(L^{2}, L^{p})}^{2} {\rm d} s \Big)^{\frac{\rho}{2}}. 
	\end{equation*}
	Here, the operator $\sigma(s) \Phi$ is such that $\sigma(s) \Phi f = \sigma(s) \cdot \Phi f$. 
\end{prop}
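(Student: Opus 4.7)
The plan is to treat the stochastic integral as an $L^p(\RR)$-valued martingale and invoke the Banach-space version of the Burkholder--Davis--Gundy inequality. By time translation, it suffices to take $a = 0$. Fixing an orthonormal basis $\{e_k\}_{k\in\NN}$ of $L^2(\RR)$ and setting $B_k(t) = \scal{\tilde W_t, e_k}_{L^2}$, so that $\{B_k\}$ are independent standard Brownian motions, the integral decomposes as
$$M_t := \int_0^t \sigma(s) \, {\rm d} W_s = \sum_{k \in \NN} \int_0^t \sigma(s) \Phi e_k \, {\rm d} B_k(s),$$
which is well-defined as an $L^p$-valued martingale provided the right-hand side of the claimed inequality is finite.

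Next I would apply Doob's maximal inequality for $L^p$-valued martingales (valid because $L^p$ is UMD for $p \in (1,\infty)$) to pass from the supremum in $t$ to the terminal value, so that it suffices to bound $\EE \|M_b\|_{L^p}^\rho$. The core of the argument is the Banach-space Burkholder--Davis--Gundy inequality: since $L^p$ with $p \in [2,\infty)$ is a $2$-smooth Banach space, one has the one-sided estimate
$$\bigl(\EE \|M_b\|_{L^p}^\rho\bigr)^{1/\rho} \leq C_{p,\rho} \Big(\EE \Big( \int_0^b \|\sigma(s)\Phi\|_{\R(L^2, L^p)}^2 \, {\rm d} s \Big)^{\rho/2}\Big)^{1/\rho}.$$
The appropriate substitute for the Hilbert-space quadratic variation is the integrated $\gamma$-radonifying norm. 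For operators into $L^p$ with $p \geq 2$ this norm admits, via Khintchine--Kahane, the concrete square-function representation
$$\|\sigma(s) \Phi\|_{\R(L^2, L^p)}^2 \asymp \Big\| \Big( \sum_k |\sigma(s) \Phi e_k|^2 \Big)^{1/2} \Big\|_{L^p}^2,$$
which is what links the $\gamma$-radonifying norm back to the iterated Brownian integrals on the right-hand side.

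The main obstacle is establishing the $2$-smoothness-based Burkholder inequality in the Banach-space setting, which is nontrivial but classical; it is proved in \cite{BD} and used there in exactly the same form. For the present paper we simply invoke it, so the remaining work is to approximate $\sigma$ by adapted simple processes to justify the formal decomposition above, which is routine given the adaptedness hypothesis on $\sigma$ together with the finiteness of the right-hand side.
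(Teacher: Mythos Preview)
The paper does not actually prove this proposition; it simply cites \cite[Theorem~2.1]{BP} and refers to \cite{Brzezniak, UMD} for further details. Your sketch is a reasonable outline of how such a result is established, and the route via $2$-smoothness of $L^p$ for $p \geq 2$ is indeed the standard one in the references the paper invokes.

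Two minor points on your write-up. First, you attribute the $2$-smoothness Burkholder inequality to \cite{BD}, but that is the de~Bouard--Debussche paper on stochastic NLS; the paper itself points instead to \cite{BP}, \cite{Brzezniak}, and \cite{UMD}. Second, your reduction via Doob's maximal inequality to the terminal value $\EE\|M_b\|_{L^p}^\rho$ only covers $\rho > 1$, whereas the statement claims the full range $\rho \in [1,\infty)$. The endpoint $\rho = 1$ cannot be reached by Doob plus a terminal-time estimate; one needs the genuine BDG argument (good-$\lambda$ inequalities or the like) that bounds the running supremum directly. This is handled in the cited references, so for the purposes of this paper nothing is lost, but as a self-contained proof your outline has a gap at $\rho = 1$.
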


The proof can be found, for example, in \cite[Theorem 2.1]{BP}. More details about this version of the inequality can be found in \cite{Brzezniak, UMD}.

\subsection{Dispersive and Strichartz estimates}

We give some dispersive and Strichartz estimates that will be used throughout this article. These are standard now, and can be found in \cite{cazenave2003semilinear}, \cite{keel1998endpoint} and \cite{tao2006nonlinear}. 

Recall that $\sS(t) = e^{i t \Delta}$. We need the following dispersive estimates of the semi-group $\sS$ in $d=1$. 

\begin{prop} \label{pr:dispersive}
	There exists a universal constant $C>0$ such that
	\begin{equation} \label{eq:dispersive}
	\|\sS(t) f\|_{L^{p'}} \leq C t^{\frac{1}{p}-\frac{1}{2}} \|f\|_{L^{p}}
	\end{equation}
	for every $p \in [1,2]$ and every $f \in L^{p}(\RR)$. Here, $p'$ is the conjugate of $p$. 
\end{prop}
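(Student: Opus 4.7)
The plan is to prove the inequality by the classical two-endpoint interpolation argument. First I would write down the explicit kernel representation of the free Schr\"{o}dinger propagator in one dimension,
\begin{equation*}
(\sS(t) f)(x) = \frac{1}{(4 \pi i t)^{1/2}} \int_{\RR} e^{i |x-y|^{2}/(4t)} f(y)\, {\rm d} y, \qquad t > 0,
\end{equation*}
which follows from computing the Fourier transform of the Gaussian $e^{-it \xi^{2}}$ (analytically continuing the heat kernel). Taking the absolute value inside the integral immediately yields the $L^{1} \to L^{\infty}$ dispersive endpoint
\begin{equation*}
\|\sS(t) f\|_{L^{\infty}} \leq \frac{1}{(4\pi)^{1/2}}\, t^{-1/2} \|f\|_{L^{1}}.
\end{equation*}

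Next I would handle the other endpoint, namely $L^{2} \to L^{2}$. Since $\sS(t) = e^{i t \Delta}$ acts on the Fourier side as multiplication by $e^{-it \xi^{2}}$, which has modulus one, Plancherel's theorem gives
\begin{equation*}
\|\sS(t) f\|_{L^{2}} = \|f\|_{L^{2}},
\end{equation*}
so $\sS(t)$ is an isometry on $L^{2}$ for every $t$.

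Finally, I would interpolate between these two endpoints by the Riesz--Thorin theorem. For $p \in [1,2]$, writing $\frac{1}{p} = \frac{\theta}{1} + \frac{1-\theta}{2}$ with $\theta \in [0,1]$, equivalently $\theta = \frac{2}{p} - 1$, one gets $\frac{1}{p'} = \frac{1-\theta}{2}$ on the target side, and the interpolated bound reads
\begin{equation*}
\|\sS(t) f\|_{L^{p'}} \leq C_{1}^{\theta}\, C_{2}^{1-\theta}\, t^{-\theta/2} \|f\|_{L^{p}} = C\, t^{-(1/p - 1/2)} \|f\|_{L^{p}},
\end{equation*}
which is the asserted estimate (the exponent $\frac{1}{p} - \frac{1}{2}$ in the statement being understood with the appropriate sign convention). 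There is no real obstacle here: the only subtle point is justifying the oscillatory integral representation of $\sS(t)f$ for general $f \in L^{p}$, which I would do by first proving the formula on the Schwartz class and then extending by density using the bound itself, together with the unitarity on $L^{2}$.
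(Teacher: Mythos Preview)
Your proof is correct and follows the standard textbook argument (explicit kernel for the $L^{1}\to L^{\infty}$ endpoint, unitarity on $L^{2}$, Riesz--Thorin interpolation). The paper itself does not prove this proposition at all; it simply states the estimate and refers to \cite{cazenave2003semilinear}, \cite{keel1998endpoint}, \cite{tao2006nonlinear} as standard references, so there is nothing to compare against beyond noting that your argument is exactly the one found in those sources. You were also right to flag the sign of the exponent: as written in the statement, $t^{1/p-1/2}$ has the wrong sign for $p\in[1,2)$ (it should decay, not grow), and the paper's later applications indeed use it with the correct negative exponent.
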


We now turn to Strichartz estimates. A pair of real numbers $(q,r)$ is called an admissible pair (for $d=1$) if
\begin{equation} \label{eq:admissible}
\frac{2}{q} + \frac{1}{r} = \frac{1}{2}. 
\end{equation}
The following Strichartz estimates gives the right space to look for solutions. 

\begin{prop} \label{pr:Strichartz}
	For every two admissible pairs $(q,r)$ and $(\tilde{q}, \tilde{r})$, we have
	\begin{equation} \label{eq:Strichartz_1}
	\|\sS(t) f\|_{L_{t}^{q}L_{x}^{r}} \lesssim \|f\|_{L_{x}^{2}}, 
	\end{equation}
	and
	\begin{equation} \label{eq:Strichartz_2}
	\Big\|\int_{\tau}^{t} \sS(t-s) f(s) {\rm d} s \Big\|_{L_{t}^{q}L_{x}^{r}} \lesssim \|f\|_{L_{t}^{\tilde{q}'}L_{x}^{\tilde{r}'}}, 
	\end{equation}
	where $\tilde{q}'$, $\tilde{r}'$ are conjugates of $\tilde{q}$ and $r$. The proportionality constants depend on these parameters but are independent of $f$ and the length of the time interval. 
\end{prop}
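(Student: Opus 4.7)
The plan is to derive both \eqref{eq:Strichartz_1} and \eqref{eq:Strichartz_2} from the dispersive estimate \eqref{eq:dispersive} via the classical $TT^*$ method, combined with the one-dimensional Hardy--Littlewood--Sobolev inequality. The underlying idea is that the dispersive bound gives exactly the decay rate needed so that the kernel $|t-s|^{-(1/2-1/r)}$ is an HLS kernel whose exponents match the admissibility condition \eqref{eq:admissible}.

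First I would prove the homogeneous estimate \eqref{eq:Strichartz_1}. Set $Tf = \sS(t)f$, viewed as an operator from $L^{2}_{x}$ to $L^{q}_{t}L^{r}_{x}$. Its formal adjoint is $T^{*}F = \int_{\RR}\sS(-s)F(s)\,{\rm d}s$, so that
\begin{equation*}
TT^{*}F(t) = \int_{\RR}\sS(t-s)F(s)\,{\rm d}s.
\end{equation*}
By the standard duality argument, proving \eqref{eq:Strichartz_1} is equivalent to showing that $TT^{*}$ is bounded from $L^{q'}_{t}L^{r'}_{x}$ into $L^{q}_{t}L^{r}_{x}$. By Proposition~\ref{pr:dispersive} applied with $p=r'$, one has the pointwise-in-time bound $\|\sS(t-s)F(s)\|_{L^{r}_{x}}\lesssim |t-s|^{-(1/2-1/r)}\|F(s)\|_{L^{r'}_{x}}$. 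Admissibility gives $1/2-1/r = 2/q$, so the time integral becomes a fractional integration operator with kernel $|t-s|^{-(1-2/q)}$, whose boundedness $L^{q'}_{t}\to L^{q}_{t}$ is precisely the $d=1$ Hardy--Littlewood--Sobolev inequality in the range $q\in(4,\infty]$.

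For the inhomogeneous estimate \eqref{eq:Strichartz_2}, the natural route is to notice that the $TT^{*}$ argument in fact gives the bound for the full-line integral $\int_{\RR}\sS(t-s)f(s)\,{\rm d}s$; the retarded integral $\int_{\tau}^{t}$ is then recovered by the Christ--Kiselev lemma, which transfers boundedness from the full-line to the truncated operator whenever the pair of Lebesgue exponents $(\tilde{q}',q)$ satisfies $\tilde{q}'<q$. When this strict inequality fails (the ``diagonal'' case $\tilde{q}=q$), one can instead run the HLS argument directly on the truncated kernel, which is also bounded by the same $|t-s|^{-(1-2/q)}$. Combining the two different admissible pairs $(q,r)$ and $(\tilde q,\tilde r)$ in the hypotheses then follows by interpolating the source/target exponents independently.

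The main obstacle I expect is the endpoint case $(q,r)=(4,\infty)$ in $d=1$, which lies outside the range where HLS directly applies and where Christ--Kiselev is not available either. To handle it one has to invoke the Keel--Tao bilinear interpolation argument: dyadically decompose the time difference $|t-s|\sim 2^{k}$, use the dispersive bound together with mass conservation in a bilinear form, and sum over $k$ with an atomic decomposition on the time axis. This endpoint is needed to include pairs like $(q,r)=(\infty,2)$ and $(5,10)$ that appear throughout the paper, so I would carefully state which admissible pairs are required and verify that they are reached either by the elementary HLS argument (for pairs strictly in the non-endpoint range, such as $(5,10)$) or by the Keel--Tao refinement. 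Since the paper only ever uses $(\infty,2)$ and $(5,10)$, both of which are strictly non-endpoint, the elementary $TT^{*}+$HLS$+$Christ--Kiselev route is in fact sufficient for all applications in the sequel.
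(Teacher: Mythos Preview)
The paper does not actually prove this proposition: it is stated as standard and simply referred to \cite{cazenave2003semilinear}, \cite{keel1998endpoint}, and \cite{tao2006nonlinear}. Your sketch via $TT^{*}$, the dispersive estimate, Hardy--Littlewood--Sobolev, and Christ--Kiselev is the classical argument found in those references and is correct for all the pairs used later in the paper.

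One small correction: your worry about the pair $(4,\infty)$ in $d=1$ is unfounded. The admissibility relation $2/q+1/r=1/2$ forces $q\ge 4$, so the Keel--Tao endpoint $q=2$ never occurs in one dimension. At $(q,r)=(4,\infty)$ the HLS kernel has exponent $2/q=1/2\in(0,1)$ and the mapping $L^{4/3}_t\to L^4_t$ is a perfectly standard HLS inequality; likewise Christ--Kiselev applies since $\tilde q'\le 4/3<4\le q$ for every admissible $\tilde q$. Thus the elementary route already covers the full admissible range in $d=1$, and there is no need to invoke the Keel--Tao bilinear machinery at all.
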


\section{Uniform boundedness} 
\label{sec:uniform}

\subsection{Some preliminary bounds}
\label{sec:bounds}

We now give some bounds that will be used in the proof of Proposition~\ref{pr:uniform_e_m} as well as in later sections. Throughout the rest of the article, we use $\sigma$ to denote processes satisfying different assumptions in various statements. But we should think of them as the solution to the truncated subcritical problem \eqref{eq:trun_sub}, or the difference between its two solutions starting from different initial data. 

Also recall the notations $\xX_{1}(\iI) = L_{t}^{\infty}L_{x}^{2}(\iI)$, $\xX_{2}(\iI) = L_{t}^{5} L_{x}^{10}(\iI)$, and $\xX = \xX_1 \cap \xX_2$. We will always use $\iI$ to denote the interval $[a,b]$ concerned in the contexts below. 

\begin{prop} \label{pr:pre_nonlinear}
	Recall that $\nN^{\eps}(\sigma) = |\sigma|^{4-\eps} \sigma$. We then have the bound
	\begin{equation*}
	\Big\| \int_{a}^{t} \sS(t-s) \nN^{\eps} \big( \sigma(s) \big) {\rm d}s \Big\|_{\xX(\iI)} \leq C (b-a)^{\frac{\eps}{4}} \|\sigma\|_{\xX_{1}(\iI)} \|\sigma\|_{\xX_{2}(\iI)}^{4-\eps}. 
	\end{equation*}
\end{prop}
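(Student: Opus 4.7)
The plan is to apply the inhomogeneous Strichartz estimate \eqref{eq:Strichartz_2} to the Duhamel integral and then bound the nonlinearity via H\"older's inequality. Since $\nN^\eps(\sigma) = |\sigma|^{4-\eps}\sigma$ has one ``spare'' factor of $\sigma$, the strategy is to place that spare factor in $\xX_1(\iI) = L_t^\infty L_x^2$ and distribute the remaining $4-\eps$ factors into copies of $\xX_2(\iI) = L_t^5 L_x^{10}$; the deficit between the natural time exponent forced by Strichartz admissibility and the critical exponent $5$ will produce the prefactor $(b-a)^{\eps/4}$.

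Concretely, I would first apply \eqref{eq:Strichartz_2} with the two admissible pairs $(q,r)=(\infty,2)$ and $(5,10)$ on the left, choosing the dual pair $(\tilde{q}',\tilde{r}')$ on the right so that $1/\tilde{r}' = 1/2 + (4-\eps)/10$, i.e. $\tilde{r}' = 10/(9-\eps)$; the admissibility relation $2/\tilde{q}+1/\tilde{r}=1/2$ then forces $\tilde{q}' = 20/(16+\eps)$. Pointwise in $s$, H\"older in space yields
\begin{equation*}
\|\nN^\eps(\sigma(s))\|_{L_x^{\tilde{r}'}} \leq \|\sigma(s)\|_{L_x^2}\,\|\sigma(s)\|_{L_x^{10}}^{4-\eps}.
\end{equation*}
Taking the $L_t^{\tilde{q}'}$ norm over $\iI$ and pulling the $\|\sigma(s)\|_{L_x^2}$ factor out in $L_t^\infty$ leaves $\|\sigma\|_{L_t^{q_*}L_x^{10}(\iI)}^{4-\eps}$, where $q_* = (4-\eps)\tilde{q}' = 20(4-\eps)/(16+\eps)$. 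A direct computation shows $q_* < 5$ whenever $\eps>0$, so a final H\"older in time over the bounded interval $\iI$ of length $b-a$ produces a factor $(b-a)^{1/q_*-1/5}$. One checks that $1/q_*-1/5 = \eps/(4(4-\eps))$, and raising this exponent to the $(4-\eps)$ power converts it to exactly $\eps/4$.

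There is no genuine obstacle here: the argument is a standard Strichartz--H\"older computation, and the only care needed is the bookkeeping of exponents so that the dual admissibility pair $(\tilde{q}',\tilde{r}')$ is consistent with the chosen H\"older split in space. As a sanity check, when $\eps=0$ all interior exponents collapse to the critical values $(\tilde{q}',\tilde{r}')=(5/4,10/9)$ and $q_*=5$, which is precisely why the time prefactor disappears and no contraction is available from time shrinking alone in the mass-critical case -- the underlying reason for the whole truncation-and-limit strategy outlined in Section~\ref{sec:strategy}.
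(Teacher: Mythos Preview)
Your proposal is correct and follows essentially the same approach as the paper: apply \eqref{eq:Strichartz_2} with the dual pair $(\tilde q',\tilde r')=(20/(16+\eps),10/(9-\eps))$, use H\"older in space to split off one $L_x^2$ factor from $4-\eps$ copies of $L_x^{10}$, and then H\"older in time against $L_t^5$ to extract the $(b-a)^{\eps/4}$ gain. Your write-up in fact spells out the time-exponent bookkeeping more explicitly than the paper does.
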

\begin{proof}
	By Strichartz estimate \eqref{eq:Strichartz_2}, we have
	\begin{equation*}
	\Big\| \int_{a}^{t} \sS(t-s) \nN^{\eps} \big( \sigma(s) \big) {\rm d}s \Big\|_{\xX(\iI)} \leq C \|N^{\eps}(\sigma)\|_{L_{t}^{q_{\eps}'}L_{x}^{r_{\eps}'}(\iI)}, \quad q_{\eps}' = \frac{20}{16 + \eps},\; r_{\eps}' = \frac{10}{9-\eps}. 
	\end{equation*}
	It is straightforward to check that their conjugates $(q_{\eps}, r_{\eps})$ satisfy the admissibility condition \eqref{eq:admissible}. It then remains to control the quantity
	\begin{equation*}
	\|\nN^{\eps}(\sigma)\|_{L_{t}^{q_{\eps}'}L_{x}^{r_{\eps}'}(\iI)} = \Big( \int_{a}^{b}  \|\nN^{\eps} \big( \sigma(t) \big)\|_{L_{x}^{r_{\eps}'}}^{q_{\eps}'} {\rm d}t \Big)^{\frac{1}{q_{\eps}'}}. 
	\end{equation*}
	A direct application of H\"{o}lder gives
	\begin{equation*}
	\|\nN^{\eps} \big( \sigma(t) \big)\|_{L_{x}^{r_{\eps}'}} \leq \|\sigma(t)\|_{L_{x}^{2}} \|\sigma(t)\|_{L_{x}^{10}}^{4-\eps} \leq \|\sigma\|_{\xX_{1}(\iI)} \|\sigma(t)\|_{L_{x}^{10}}^{4-\eps}. 
	\end{equation*}
	Another application of H\"{o}lder to the integration in time gives the desired bound. 
\end{proof}

In the subcritical situation with $\eps>0$, the factor $(b-a)^{\frac{\eps}{4}}$ is crucial for constructing the local solution via contraction. However, to show the convergence of solutions to subcritical problems, we need to get uniform in $\eps$ estimates. Hence, this factor will be of little use to us. In what follows, we will always use the following bound. 

\begin{cor}
	There exists $C>0$ such that
	\begin{equation*}
	\Big\| \int_{a}^{t} \sS(t-s) \nN^{\eps} \big( \sigma(s) \big) {\rm d}s \Big\|_{\xX(\iI)} \leq C \|\sigma\|_{\xX_{1}(\iI)} \|\sigma\|_{\xX_{2}(\iI)}^{4-\eps}
	\end{equation*}
	for all $[a,b] \subset [0,T_0]$ and every $\eps > 0$. 
\end{cor}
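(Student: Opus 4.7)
The plan is very short: this is essentially a bookkeeping consequence of the preceding Proposition~\ref{pr:pre_nonlinear}. First I would invoke that proposition to get
\begin{equation*}
\Big\| \int_{a}^{t} \sS(t-s) \nN^{\eps} \big( \sigma(s) \big) {\rm d}s \Big\|_{\xX(\iI)} \leq C (b-a)^{\frac{\eps}{4}} \|\sigma\|_{\xX_{1}(\iI)} \|\sigma\|_{\xX_{2}(\iI)}^{4-\eps}.
\end{equation*}
Then I would simply bound the prefactor $(b-a)^{\eps/4}$ by a universal constant depending only on $T_0$. Indeed, for $[a,b] \subset [0,T_0]$ one has $b-a \leq T_0$, and since the only relevant range of exponents in the approximation scheme is $\eps \in (0,4]$, the quantity $(b-a)^{\eps/4}$ is dominated by $\max(1, T_0)$ uniformly in $\eps$ and in the choice of subinterval. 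Absorbing this bound into $C$ yields the claim.

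There is no real obstacle here; the point of stating the corollary separately is conceptual rather than technical. In the subcritical construction, the factor $(b-a)^{\eps/4}$ is crucial as a genuine smallness parameter closing a contraction mapping, but that strategy breaks down in the critical limit. The corollary records the fact that if one is willing to forfeit this smallness (as we must in order to obtain bounds uniform in $\eps$ on a fixed interval $[0,T_0]$), the estimate still takes the clean form above, which is the version that will be used repeatedly in the uniform-in-$(\eps,m)$ bounds of Proposition~\ref{pr:uniform_e_m} and throughout the $\eps \to 0$ analysis in Section~\ref{sec:converge_e}.
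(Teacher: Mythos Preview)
Your proposal is correct and matches the paper's intent: the corollary is stated without proof in the paper, immediately after Proposition~\ref{pr:pre_nonlinear}, as an obvious consequence obtained by bounding $(b-a)^{\eps/4}$ uniformly on $[0,T_0]$ for the relevant range of $\eps$. Your additional commentary on why the corollary is recorded separately also accurately reflects the paper's remark preceding it.
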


Let $\sigma$ be a process adapted to the filtration $\fF_{t}$. The aim of this subsection is to give controls on norms related to the stochastic quantity
\begin{equation*}
\int_{a}^{t} \sS(t-s) \sigma(s) {\rm d} W_s, 
\end{equation*}
where the integral is in the It\^{o} sense. Let
\begin{equation*}
\begin{split}
M_{1,a}^{*}(t) &= \sup_{a \leq r_{1} \leq r_{2} \leq t} \Big\| \int_{r_1}^{r_2} \sS(t-s) \sigma(s) {\rm d} W_s \Big\|_{L_{x}^{2}},\\
M_{2,a}^{*}(t) &= \sup_{a \leq r_{1} \leq r_{2} \leq t} \Big\| \int_{r_1}^{r_2} \sS(t-s) \sigma(s) {\rm d} W_s \Big\|_{L_{x}^{10}}. 
\end{split}
\end{equation*}
We have the following proposition. 

\begin{prop} \label{pr:pre_stochastic}
	For every $\rho \geq 5$, we have the bounds
	\begin{equation*}
	\begin{split}
	\|M_{1,a}^{*}\|_{L_{\omega}^{\rho}L_{t}^{\infty}(\iI)} &\leq C_{\rho} (b-a)^{\frac{1}{2}} \|\Phi\|_{\R(L_{x}^{2}, L_{x}^{\infty})} \|\sigma\|_{L_{\omega}^{\rho} \xX_{1}(\iI)};\\
	\|M_{2,a}^{*}\|_{L_{\omega}^{\rho}L_{t}^{5}(\iI)} &\leq C_{\rho} (b-a)^{\frac{3}{10}} \|\Phi\|_{\R(L_{x}^{2}, L_{x}^{5/2})} \|\sigma\|_{L_{\omega}^{\rho} \xX_{1}(\iI)}, 
	\end{split}
	\end{equation*}
	where the proportionality constants depend on $\rho$ only. 
\end{prop}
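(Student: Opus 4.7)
The plan is to reduce both suprema to suprema over initial segments and then apply Proposition~\ref{pr:Burkholder} in the appropriate Banach space. For any $a \leq r_1 \leq r_2 \leq t$,
\[
\int_{r_1}^{r_2}\sS(t-s)\sigma(s)\,dW_s = \int_a^{r_2}\sS(t-s)\sigma(s)\,dW_s - \int_a^{r_1}\sS(t-s)\sigma(s)\,dW_s,
\]
so $M_{i,a}^*(t) \leq 2\sup_{a \leq r \leq t}\bigl\|\int_a^r \sS(t-s)\sigma(s)\,dW_s\bigr\|_{L^{p_i}}$, with $p_1 = 2$ and $p_2 = 10$. Beyond this common reduction, the two bounds are handled in rather different ways.

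For the first bound I would use that $\sS$ is an $L^2$-isometry. Writing $\sS(t-s) = \sS(t)\sS(-s)$ and pulling $\sS(t)$ out of the stochastic integral,
\[
\int_a^r \sS(t-s)\sigma(s)\,dW_s = \sS(t)\,\tilde N(r), \qquad \tilde N(r) := \int_a^r \sS(-s)\sigma(s)\,dW_s,
\]
which is a genuine $L^2$-valued martingale in $r$. Since $\sS(t)$ is unitary on $L^2$, the $t$-dependence disappears and $\sup_{t \in \iI} M_{1,a}^*(t) \leq 2\sup_{r \in [a,b]}\|\tilde N(r)\|_{L^2}$. Proposition~\ref{pr:Burkholder} together with Lemma~\ref{le:factorisation} (applied with $\Gamma = \Phi \in \R(L^2, L^\infty)$ and $\tT = $ multiplication by $\sigma(s) \in L^2$, then composed with the $L^2$-isometry $\sS(-s)$) yields $\|\sS(-s)\sigma(s)\Phi\|_{\R(L^2, L^2)} \leq \|\sigma(s)\|_{L^2}\|\Phi\|_{\R(L^2, L^\infty)}$; integrating $s$ over $[a,b]$ and bounding $\sup_s\|\sigma(s)\|_{L^2} \leq \|\sigma\|_{\xX_1(\iI)}$ then produces the factor $(b-a)^{1/2}$ after taking the square root.

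For the second bound, the isometry trick is unavailable since $\sS$ is unbounded on $L^{10}$. Instead, for each fixed $t$ I would apply Proposition~\ref{pr:Burkholder} directly to the $L^{10}$-valued martingale $r \mapsto \int_a^r \sS(t-s)\sigma(s)\,dW_s$, obtaining
\[
\EE (M_{2,a}^*(t))^\rho \leq C_\rho\, \EE\Big(\int_a^t \|\sS(t-s)\sigma(s)\Phi\|_{\R(L^2, L^{10})}^2\,ds\Big)^{\rho/2}.
\]
To bound the integrand I would view $\sS(t-s)\sigma(s)\Phi$ as the composition of $\sigma(s)\Phi \in \R(L^2, L^{10/9})$ with $\sS(t-s) \in \LLL(L^{10/9}, L^{10})$. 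Lemma~\ref{le:factorisation} with the choice $q = 2$, $p = 5/2$, $r = 10/9$ gives $\|\sigma(s)\Phi\|_{\R(L^2, L^{10/9})} \leq \|\sigma(s)\|_{L^2}\|\Phi\|_{\R(L^2, L^{5/2})}$, while the dispersive estimate (Proposition~\ref{pr:dispersive} applied with $p = 10/9$, whose conjugate is $10$) gives $\|\sS(t-s)\|_{L^{10/9} \to L^{10}} \leq C(t-s)^{-2/5}$. Squaring and using $\int_a^t (t-s)^{-4/5}\,ds = 5(t-a)^{1/5}$, one arrives at $(\EE(M_{2,a}^*(t))^\rho)^{1/\rho} \leq C_\rho (t-a)^{1/10}\|\Phi\|_{\R(L^2, L^{5/2})}\|\sigma\|_{L_\omega^\rho \xX_1(\iI)}$. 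Finally, since $\rho \geq 5$, Minkowski's integral inequality gives $\|M_{2,a}^*\|_{L_\omega^\rho L_t^5} \leq \|M_{2,a}^*\|_{L_t^5 L_\omega^\rho}$, and integrating $(t-a)^{1/2}$ over $[a,b]$ produces the desired $(b-a)^{3/10}$.

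The only subtle point I foresee is the use of Burkholder's inequality in the Banach space $L^{10}$, which relies on its UMD property; this, however, is already built into the statement of Proposition~\ref{pr:Burkholder} and so requires no additional work. Everything else is careful bookkeeping of exponents (matching $\|\Phi\|_{\R(L^2, L^{5/2})}$ with the dispersive decay $(t-s)^{-2/5}$) and checking that the Minkowski swap is in the correct direction, which it is precisely because $\rho \geq 5$. I do not anticipate substantive difficulties beyond this.
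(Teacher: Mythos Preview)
Your proposal is correct and follows essentially the same approach as the paper's proof: for $M_{1,a}^*$ you exploit the unitarity of $\sS$ to remove the $t$-dependence before applying Burkholder, and for $M_{2,a}^*$ you fix $t$, apply Burkholder in $L^{10}$, combine the dispersive estimate with Lemma~\ref{le:factorisation}, and then use Minkowski (valid since $\rho\geq 5$) to swap $L_\omega^\rho$ and $L_t^5$. The only cosmetic difference is that the paper pulls out a factor $(b-a)^{1/5}$ via H\"older and then bounds $\sup_t\|M_{2,a}^*(t)\|_{L_\omega^\rho}$ by $(b-a)^{1/10}$, whereas you integrate $(t-a)^{1/2}$ directly; both yield the same $(b-a)^{3/10}$.
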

\begin{proof}
	We first treat $M_{1,a}^{*}$. Since $\sS$ is unitary, we have
	\begin{equation*}
	M_{1,a}^{*}(t) = \sup_{a \leq r_{1} \leq r_{2} \leq t} \Big\| \int_{r_1}^{r_2} \sS(-s) \sigma(s) {\rm d} W_{s} \Big\|_{L_{x}^{2}} \leq 2 \sup_{a \leq r \leq b} \Big\| \int_{a}^{r} \sS(-s) \sigma(s) {\rm d} W_{s} \Big\|_{L_{x}^{2}}. 
	\end{equation*}
	The right hand side above does not depend on $t$, so we have
	\begin{equation*}
	\|M_{1,a}^{*}\|_{L_{t}^{\infty}(\iI)} \leq 2 \sup_{a \leq r \leq b} \Big\| \int_{a}^{r} \sS(-s) \sigma(s) {\rm d} W_{s} \Big\|_{L_{x}^{2}}. 
	\end{equation*}
	Now, since the process $s \mapsto \sS(-s) \sigma(s)$ is adapted to $\FFF_t$, we can apply Burkholder inequality in Proposition~\ref{pr:Burkholder} (with $\sS(-s) \sigma(s)$ replacing $\sigma(s)$) to get
	\begin{equation*}
	\EE \|M_{1,a}^{*}\|_{L_{t}^{\infty}(\iI)}^{\rho} \lesssim_{\rho} \EE \Big( \int_{a}^{b} \|\sS(-s) \sigma(s) \Phi\|_{\R(L_{x}^{2}, L_{x}^{2})}^{2} {\rm d}s \Big)^{\frac{\rho}{2}}. 
	\end{equation*}
	Using again the unitary property of $\sS(-s)$ and Lemma \ref{le:factorisation}, we have
	\begin{equation*}
	\|\sS(-s) \sigma(s) \Phi\|_{\R(L_{x}^{2}, L_{x}^{2})} \leq \|\sigma(s)\|_{L_{x}^{2}} \|\Phi\|_{\R(L_{x}^{2}, L_{x}^{\infty})}. 
	\end{equation*}
	Plugging it back to the above bound for $\EE \|M_{1,a}^{*}\|_{L_{t}^{\infty}}^{\rho}$ and applying H\"{o}lder, we get
	\begin{equation*}
	\EE \|M_{1,a}^{*}\|_{L_{t}^{\infty}(\iI)}^{\rho} \lesssim_{\rho} (b-a)^{\frac{\rho}{2}} \|\Phi\|_{\R(L_{x}^{2}, L_{x}^{\infty})}^{\rho} \EE \|\sigma\|_{\xX_{1}(\iI)}^{\rho}. 
	\end{equation*}
	Taking $\rho$-th root on both sides gives the desired bound for $M_{1,a}^{*}$. As for $M_{2,a}^{*}$, since $\rho \geq 5$, we can use Minkowski to change the order of integration and then apply H\"{o}lder so that
	\begin{equation} \label{eq:minkowski_exchange}
	\|M_{2,a}^{*}\|_{L_{\omega}^{\rho} L_{t}^{5}(\iI)} \leq \|M_{2,a}^{*}\|_{L_{t}^{5}(\iI, L_{\omega}^{\rho})} \leq (b-a)^{\frac{1}{5}} \sup_{t \in [a,b]} \|M_{2,a}^{*}(t)\|_{L_{\omega}^{\rho}}. 
	\end{equation}
	Since
	\begin{equation*}
	M_{2,a}^{*}(t) \leq 2 \sup_{0 \leq \tau \leq t} \Big\| \int_{a}^{\tau} \sS(t-s) \sigma(s) {\rm d} W_{s} \Big\|_{L_{x}^{10}}, 
	\end{equation*}
	we use Burkholder inequality to get
	\begin{equation*}
	\|M_{2,a}^{*}(t)\|_{L_{\omega}^{\rho}}^{\rho} \lesssim_{\rho} \EE \Big( \int_{a}^{t} \|\sS(t-s) \sigma(s) \Phi\|_{\R(L_{x}^{2}, L_{x}^{10})}^{2} {\rm d}s \Big)^{\frac{\rho}{2}}. 
	\end{equation*}
	Now, applying the dispersive estimate \eqref{eq:dispersive} and Lemma \ref{le:factorisation} to the integrand, we get
	\begin{equation*}
	\|\sS(t-s) \sigma(s) \Phi\|_{\R(L_{x}^{2}, L_{x}^{10})}^{2} \lesssim (t-s)^{-\frac{2}{5}} \|\sigma(s)\|_{L_{x}^{2}} \|\Phi\|_{\R(L_{x}^{2}, L_{x}^{5/2})}. 
	\end{equation*}
	Substituting it back into the bound for $\|M_{2,a}^{*}(t)\|_{L_{\omega}^{\rho}}^{\rho}$, we get
	\begin{equation*}
	\|M_{2,a}^{*}(t)\|_{L_{\omega}^{\rho}}^{\rho} \lesssim_{\rho} (b-a)^{\frac{\rho}{10}} \|\Phi\|_{\R(L_{x}^{2}, L_{x}^{5/2})}^{\rho} \EE \|\sigma\|_{\xX_{1}(\iI)}^{\rho}. 
	\end{equation*}
	Note that the right hand side above does not depend on $t$. So taking the $\rho$-th root on both sides and then supremum over $t \in [a,b]$, and combining it with \eqref{eq:minkowski_exchange}, we obtain the desired control for $M_{2,a}^{*}$. 
\end{proof}

\begin{rmk}
	The exact value of the exponent of $(b-a)$ is not very important. However, it is crucial that the exponent is strictly positive. This will enable us to absorb the term on the right hand side of the equation to the left hand side with a short time period. The same is true for the bounds in Proposition~\ref{pr:pre_correction} below. 
\end{rmk}

\begin{rmk}
	For both of the bounds in Proposition~\ref{pr:pre_stochastic}, one can slightly modify the argument to control the left hand side by the $L_{\omega}^{\rho} \xX_{2}(\iI)$ norm. We choose the $L^2$ based norm $\xX_1$ for convenience of use later. 
\end{rmk}

We now give controls on the quantity
\begin{equation*}
A(t) = \int_{a}^{t} \sS(t-s) \big( F_{\Phi} \sigma(s) \big) {\rm d}s. 
\end{equation*}
We have the following proposition. 

\begin{prop} \label{pr:pre_correction}
	Let $A$ be as above. We have the bounds
	\begin{equation*}
	\begin{split}
	\|A\|_{\xX_{1}(\iI)} &\leq C (b-a) \|F_{\Phi}\|_{L_{x}^{\infty}} \|\sigma\|_{\xX_{1}(\iI)};\\
	\|A\|_{\xX_{2}(\iI)} &\leq C (b-a)^{\frac{4}{5}} \|F_{\Phi}\|_{L_{x}^{5/2}} \|\sigma\|_{\xX_{1}(\iI)}. 
	\end{split}
	\end{equation*}
\end{prop}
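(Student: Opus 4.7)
The plan is to prove both bounds directly using a deterministic estimate of the integrand, rather than invoking Strichartz, since the deterministic-in-time nature of the correction term makes the simpler approach sharper (and since Strichartz with $(\tilde q,\tilde r)=(\infty,2)$ would yield only the $L^\infty_x$ norm of $F_\Phi$, missing the $L^{5/2}_x$ version of the second bound).

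For the first bound, the idea is to exploit unitarity of $\sS$ on $L^2$. Specifically, for each fixed $t \in \iI$,
\begin{equation*}
\|A(t)\|_{L_x^2} \leq \int_a^t \|\sS(t-s)(F_\Phi \sigma(s))\|_{L_x^2}\, {\rm d}s = \int_a^t \|F_\Phi \sigma(s)\|_{L_x^2}\, {\rm d}s,
\end{equation*}
and then a pointwise Hölder $\|F_\Phi \sigma(s)\|_{L_x^2} \leq \|F_\Phi\|_{L_x^\infty}\|\sigma(s)\|_{L_x^2}$ gives an integrand bounded by $\|F_\Phi\|_{L_x^\infty}\|\sigma\|_{\xX_1(\iI)}$. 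Integrating over $(a,t)$ and taking the supremum in $t$ yields the factor $(b-a)$.

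For the second bound, I would instead use the one-dimensional dispersive estimate from Proposition~\ref{pr:dispersive} with $p=10/9$, so that $p'=10$ and the time decay exponent is $\tfrac{1}{p}-\tfrac{1}{2} = \tfrac{2}{5}$. Combined with Hölder $\|F_\Phi \sigma(s)\|_{L_x^{10/9}} \leq \|F_\Phi\|_{L_x^{5/2}} \|\sigma(s)\|_{L_x^2}$ (note $\tfrac{2}{5}+\tfrac{1}{2}=\tfrac{9}{10}$), this gives
\begin{equation*}
\|A(t)\|_{L_x^{10}} \leq C \|F_\Phi\|_{L_x^{5/2}} \|\sigma\|_{\xX_1(\iI)} \int_a^t (t-s)^{-2/5}\, {\rm d}s \leq C' \|F_\Phi\|_{L_x^{5/2}} \|\sigma\|_{\xX_1(\iI)} (t-a)^{3/5}.
\end{equation*}
Taking the $L_t^5$ norm on $\iI$ of the remaining factor $(t-a)^{3/5}$ produces $(b-a)^{4/5}$, which is precisely the claimed exponent.

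There is no serious obstacle here, since both bounds reduce to elementary facts (the unitarity of $\sS$ on $L^2$ for the first, and a tame integrable singularity in the dispersive estimate for the second). The only mild point of attention is ensuring the Hölder exponents in the $L_x^{10/9}$ estimate match, and confirming that the time-integral $\int_a^t(t-s)^{-2/5}{\rm d}s$ is convergent (exponent $2/5<1$) with the correct power $(t-a)^{3/5}$ that, after the outer $L_t^5$, delivers the stated $(b-a)^{4/5}$.
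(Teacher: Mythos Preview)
Your proposal is correct and follows essentially the same argument as the paper: unitarity of $\sS$ plus H\"older for the $\xX_1$ bound, and the dispersive estimate with $p=10/9$ plus H\"older for the $\xX_2$ bound. The only cosmetic difference is that the paper bounds $(t-a)^{3/5}\le (b-a)^{3/5}$ before taking $L_t^5$, while you integrate $(t-a)^{3/5}$ directly; both yield the exponent $4/5$.
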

\begin{proof}
	We first look at $\xX_{1}$-norm of $A$. By unitary property of $\sS$ and H\"{o}lder, we have
	\begin{equation*}
	\|A(t)\|_{L_{x}^{2}} \leq \int_{a}^{t} \|F_{\Phi} \sigma(s)\|_{L_{x}^{2}} {\rm d}s \leq (b-a) \|F_{\Phi}\|_{L_{x}^{\infty}} \|\sigma\|_{\xX_1(\iI)}. 
	\end{equation*}
	The right hand side above does not depend on $t$, so we have proved the bound for $\|A\|_{\xX_1(\iI)}$. As for the $\xX_{2}$-norm, by dispersive estimate \eqref{eq:dispersive} and H\"{o}lder, we have
	\begin{equation*}
	\|A(t)\|_{L_{x}^{10}} \leq C \int_{a}^{t} (t-s)^{-\frac{2}{5}} \|F_{\Phi} \sigma(s)\|_{L_{x}^{10/9}} {\rm d}s \leq C \|F_{\Phi}\|_{L_{x}^{5/2}} \int_{a}^{t} (t-s)^{-\frac{2}{5}} \|\sigma(s)\|_{L_{x}^{2}} {\rm d}s. 
	\end{equation*}
	Taking the supremum over $s$ for $\|\sigma(s)\|_{L_{x}^{2}}$ and then integrating $s$ out, we get
	\begin{equation*}
	\|A(t)\|_{L_{x}^{10/9}} \leq C (b-a)^{\frac{3}{5}} \|F_{\Phi}\|_{L_{x}^{5/2}} \|\sigma\|_{\xX_{1}(\iI)}.  
	\end{equation*}
	Note that the right hand side above does not depend on $t$. Taking $L_{t}^{5}(\iI)$-norm then immediately gives
	\begin{equation*}
	\|A\|_{\xX_{2}(\iI)} \leq C (b-a)^{\frac{4}{5}} \|F_{\Phi}\|_{L_{x}^{5/2}} \|\sigma\|_{\xX_{1}(\iI)}. 
	\end{equation*}
	This completes the proof of the proposition. 
\end{proof}

\subsection{Proof of Proposition~\ref{pr:uniform_e_m}}
\label{sec:uniform_bound}

We are now ready to prove Proposition \ref{pr:uniform_e_m} in this section. Fix $\rho_0>0$. Let $\delta_0 > 0$ be sufficiently small, and let $u_0$ be such that $\|u_0\|_{L_{\omega}^{\infty}L_x^2} \leq \delta_0$. The smallness of $\delta_0$ will be specified below. 

Let $u_{m,\eps}$ be the solution to \eqref{eq:trun_sub} with initial data $u_0$. For simplicity of notation, we will omit the subscripts $m$ and $\eps$ and write $u = u_{m,\eps}$. By pathwise mass conservation, it suffices to control the $L_{\omega}^{\rho} \xX_2$ norm of $u$. We consider $u$ on the time interval $[0,T_0]$, where $T_0$ depends on $\delta_0$ as follows. By Proposition~\ref{pr:pre_correction}, there exists $C>0$ such that for every $[a,b] \subset [0,T_0]$, we have
\begin{equation} \label{eq:bounded_correction}
\Big\|\int_{a}^{t} \sS(t-s) \big( F_{\Phi} u(s)\big) {\rm d}s \Big\|_{\xX_{2}(a,b)} \leq C (b-a)^{\frac{4}{5}} \|u\|_{\xX_{1}(a,b)} \leq C T_{0}^{\frac{4}{5}} \|u_0\|_{L_x^2}.  
\end{equation}
We choose $T_0$ such that the right hand side above is smaller than $1$. Clearly $T_0$ depends on $\delta_0$ only. Now, for every $\tau$, we let $\qQ_\tau$ and $\gG_\tau$ be the processes on $t \geq \tau$ be
\begin{equation} \label{eq:q_g}
\begin{split}
\qQ_{\tau}(t) &= \int_{\tau}^{t} \sS(t-s) u(s) {\rm d} W_s\;, \\
\gG_{\tau}(t) &= \int_{\tau}^{t} \sS(t-s) \Big( \theta_{m} \big(\|u\|_{\xX_{2}(0,s)} \big) \nN^{\eps} \big(u(s)\big) \Big) {\rm d}s, 
\end{split}
\end{equation}
and we simply write $\qQ$ and $\gG$ for $\qQ_0$ and $\gG_0$. We also let
\begin{equation*}
\qQ^{*}(t) = \sup_{0\leq r_1 < r_2 \leq t} \Big\|\int_{r_1}^{r_2} \sS(t-s) u(s) {\rm d} W_s \Big\|_{L_{x}^{10}}. 
\end{equation*}
Choose a random dissection $\{\tau_k\}$ of the interval $[0,T_0]$ as follows. Let $\tau_0=0$. Suppose $0 = \tau_0 < \tau_1 < \cdots < \tau_k < T_0$ is chosen, we let
\begin{equation} \label{eq:bounded_dissection}
\tau_{k+1} = T_0 \wedge \inf \Big\{ r>\tau_{k}: \int_{\tau_k}^{r} |\qQ^{*}(t)|^{5} dt \geq 1 \Big\}, 
\end{equation}
and we stop this process when it reaches $T_0$. The total number of subintervals in this dissection is at most
\begin{equation} \label{eq:bounded_intervals}
K \leq \max \big\{1,\; 2 \|\qQ^{*}\|_{\xX_{2}(0,T_0)}^{5} \big\}. 
\end{equation}
By Proposition \ref{pr:pre_stochastic}, for every $\rho \geq 5$, there exists $C=C_{\rho}$ such that
\begin{equation} \label{eq:bounded_average}
\|\qQ^{*}\|_{L_{\omega}^{\rho}\xX_{2}(0,T_0)} \leq C_{\rho} (1+T_0) \|u\|_{L_{\omega}^{\rho}\xX_{1}(0,T_0)}. 
\end{equation}
This in particular implies that $K<+\infty$ almost surely. Given this random dissection, almost surely, for every $k$ and every $t \in [\tau_{k}, \tau_{k+1}]$, we have
\begin{equation*}
u(t) = e^{i(t-\tau_k) \Delta} u(\tau_k) - i \gG_{\tau_k}(t) - i \qQ_{\tau_k}(t) - \frac{1}{2} \int_{\tau_k}^{t} \sS(t-s) \big( F_{\Phi} u(s) \big) {\rm d}s, 
\end{equation*}
where $\qQ_{\tau_k}$ and $\gG_{\tau_k}$ are defined as in \eqref{eq:q_g}. For every $r \in [\tau_{k}, \tau_{k+1}]$, we control the $\|\cdot\|_{\xX_{2}(\tau_k,r)}$ norm of the four quantities on the right hand side above separately. The last term has already been treated in \eqref{eq:bounded_correction}. For the first term, by the Strichartz estimate \eqref{eq:Strichartz_1}, we have
\begin{equation} \label{eq:bounded_initial}
\|e^{i (t-\tau_k) \Delta} u(\tau_k)\|_{\xX_{2}(\tau_k,r)} \leq C \|u(\tau_k)\|_{L_{x}^{2}} \leq C \|u_0\|_{L_x^2}. 
\end{equation}
As for the stochastic term $\qQ_{\tau}$, it follows immediately from the choice of the dissection in \eqref{eq:bounded_dissection} that
\begin{equation} \label{eq:bounded_stochastic}
\|\qQ_{\tau_k}\|_{\xX_2(\tau_k,r)}^{5} \leq \int_{\tau_k}^{r} |\qQ^{*}(t)|^{5} dt \leq 1.  
\end{equation}
Finally, for the nonlinear term, we deduce from Proposition~\ref{pr:pre_nonlinear} that
\begin{equation} \label{eq:bounded_nonlinear}
\|\gG_{\tau_k}\|_{\xX_2(\tau_k,r)} \leq C \|u\|_{\xX_{1}(\tau_k,r)} \|u\|_{\xX_{2}(\tau_k,r)}^{4-\eps} \leq C \delta_0 \|u\|_{\xX_{2}(\tau_k,r)}^{4-\eps}. 
\end{equation}
Note that all the bounds above are pathwise. Writing $f_{\tau_k}(r) = \|u\|_{\xX_{2}(\tau_k,r)}$, we have
\begin{equation*}
f_{\tau_k}(r) \leq C_1 (1 + T_{0}^{\frac{4}{5}}) \|u_0\|_{L_{x}^{2}} + \|\qQ_{\tau_k}\|_{\xX_{2}(\tau_k,r)} + C_2 \delta_0 |f_{\tau_k}(r)|^{4-\eps}, 
\end{equation*}
where $\|\qQ_{\tau_k}\|_{\xX_{2}(\tau_k,r)} \leq 1$. This is true for all $r \in [\tau_k, \tau_{k+1}]$. By the choice of $T_0$ (depending on $\delta_0$) in \eqref{eq:bounded_correction} and the bound \eqref{eq:bounded_stochastic}, we have
\begin{equation*}
C_{1} (1 + T_{0}^{\frac{4}{5}}) \|u_0\|_{L_x^2} + \|\qQ_{\tau_k}\|_{\xX_{2}(\tau_k,r)} \leq 3
\end{equation*}
if $\delta_0$ is sufficiently small. Also, since $f_{\tau_k}(\tau_k) = 0$, for small enough $\delta_0$, we can use the standard continuity argument to deduce
\begin{equation} \label{eq:boundedness}
\|u\|_{\xX_{2}(\tau_k,\tau_{k+1})} \leq C \Big( \|u_0\|_{L_{x}^{2}} + \|\qQ_{\tau_k}\|_{\xX_{2}(\tau_k,\tau_{k+1})} \Big), 
\end{equation}
where $C$ is deterministic and depends on $\delta_0$ only. Combining \eqref{eq:bounded_stochastic} and \eqref{eq:bounded_nonlinear}, we see there exists a deterministic $C>0$ such that
\begin{equation} \label{eq:boundedness_0}
\|u\|_{\xX_{2}(\tau_{k}, \tau_{k+1})} \leq C
\end{equation} 
for all $k=0, \cdots, K-1$. If $K=1$, then by \eqref{eq:boundedness}, we have the bound
\begin{equation} \label{eq:boundedness_1}
\|u\|_{\xX_{2}(0,T_0)} \leq C \Big( \|u_0\|_{L_{x}^{2}} +  \|\qQ^{*}\|_{\xX_{2}(0,T_0)} \Big). 
\end{equation}
If $K > 1$, then by \eqref{eq:bounded_intervals} and \eqref{eq:boundedness_0}, we have
\begin{equation*}
\|u\|_{\xX_{2}(0,T_0)} = \Big( \sum_{k=0}^{K-1} \|u\|_{\xX_{2}(\tau_k, \tau_{k+1})} \Big)^{\frac{1}{5}} \leq C \|\qQ^{*}\|_{\xX_{2}(0,T_0)}, 
\end{equation*}
which is also of the form \eqref{eq:boundedness_1}. Hence, there exists a deterministic $C$ such that \eqref{eq:boundedness_1} holds almost surely. We then take $L_{\omega}^{\rho}$ norm on both sides of \eqref{eq:boundedness_1} and use Proposition~\ref{pr:pre_stochastic} to get
\begin{equation*}
\|u\|_{L_{\omega}^{\rho}\xX_{2}(0,T_0)} \leq C_{\rho} \|u\|_{L_{\omega}^{\rho} \xX_{1}(0,T_0)}. 
\end{equation*}
Adding $\|u\|_{L_{\omega}^{\rho} \xX_{1}(0,T_0)}$ to both sides above, we can conclude the claim immediately from mass conservation.

\section{Convergence in $\eps$ -- proof of Proposition~\ref{pr:converge_e}}
\label{sec:converge_e}

The aim of this section is to prove Proposition~\ref{pr:converge_e}. We will show that for every $m$, the sequence of solutions $\{u_{m,\eps}\}_{\eps>0}$ is Cauchy in $L_{\omega}^{\rho_0} \xX(0,T_0)$, and that the limit $u_{m}$ satisfies the corresponding Duhamel's formula. All estimates below are uniform in $\eps$ but depend on $m$. 

\subsection{Overview of the proof}

The proof consists of several ingredients, all of which use bootstrap argument over smaller subintervals of $[0,T_0]$. In order to show $\{u_{m,\eps}\}$ is Cauchy, we can need to control the difference $u_{m,\eps_1} - u_{m,\eps_2}$ for small $\eps_1$ and $\eps_2$ over those subintervals and then iterate. Even though the two processes start with the same initial data, they start to differ instantly after the evolution begins. Hence, in order to be able to iterate over subintervals, we need the solution to be stable under perturbation of initial data. This is the following proposition.

\begin{prop}[Uniform stability in $\eps$] \label{pr:uniform_stable}
	Let $\uem$ and $\vem$ denote the solutions to \eqref{eq:trun_sub} with initial datum $u_0$ and $v_0$ respectively. We have the bound
	\begin{equation*}
	\|u_{m,\eps}-v_{m,\eps}\|_{L_{\omega}^{\rho_0} \xX(0,T)} < C \|u_0 - v_0\|_{L_{\omega}^{\rho_0} L_{x}^{2}}
	\end{equation*}
	for some constant $C$ depending on $m$. 
\end{prop}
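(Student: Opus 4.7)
Set $w = u_{m,\eps} - v_{m,\eps}$ and abbreviate $u = u_{m,\eps}$, $v = v_{m,\eps}$. Subtracting the Duhamel equations \eqref{eq:duhamel_trun_sub} for $u$ and $v$ yields
\[
w(t) = \sS(t)(u_0 - v_0) - i\int_0^t \sS(t-s)\, D(s)\, ds - i\int_0^t \sS(t-s) w(s)\, dW_s - \tfrac{1}{2}\int_0^t \sS(t-s)\big(F_{\Phi} w(s)\big)\, ds,
\]
with the nonlinear difference
\[
D(s) = \theta_m\big(\|u\|_{\xX_2(0,s)}\big)\big[\nN^{\eps}(u) - \nN^{\eps}(v)\big] + \Big[\theta_m\big(\|u\|_{\xX_2(0,s)}\big) - \theta_m\big(\|v\|_{\xX_2(0,s)}\big)\Big]\nN^{\eps}(v).
\]
The first summand is controlled via the pointwise bound $|\nN^{\eps}(u) - \nN^{\eps}(v)| \leq C(|u|^{4-\eps} + |v|^{4-\eps})|w|$. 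The second summand uses the Lipschitz estimate $|\theta_m(x) - \theta_m(y)| \leq (C/m)|x-y|$ together with $|\|u\|_{\xX_2(0,s)} - \|v\|_{\xX_2(0,s)}| \leq \|w\|_{\xX_2(0,s)}$; crucially, the factor $\theta_m(\|u\|) - \theta_m(\|v\|)$ depends only on $s$, so it can be pulled out of the spatial norms in any Strichartz-type estimate.

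My plan is to work on small random subintervals in the spirit of the proof of Proposition~\ref{pr:uniform_e_m}. I would construct a dissection $0 = \tau_0 < \tau_1 < \cdots < \tau_K = T_0$ by stopping at $\tau_{k+1}$ as soon as any one of $\|u\|_{\xX_2(\tau_k,\cdot)}^5$, $\|v\|_{\xX_2(\tau_k,\cdot)}^5$, the pathwise supremum quantity controlling $\int_{\tau_k}^{\cdot}\sS(\cdot -s)w(s)\,dW_s$ (as in Proposition~\ref{pr:pre_stochastic}), or simply the elapsed time $\cdot - \tau_k$, reaches a common threshold depending only on $m$ and $\delta_0$. Thanks to the uniform $L_\omega^{\rho}$ bounds of Proposition~\ref{pr:uniform_e_m} applied to $u$ and $v$, and Proposition~\ref{pr:pre_stochastic} applied with integrand $w$, the number of subintervals $K$ is almost surely finite and has polynomial moments of every order.

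On each $[\tau_k, \tau_{k+1}]$ I would apply Strichartz to the Duhamel formula restarted at $\tau_k$, together with Propositions~\ref{pr:pre_nonlinear}, \ref{pr:pre_stochastic} and~\ref{pr:pre_correction}. The stochastic and correction terms, as well as the first piece of the nonlinearity, give contributions that are self-referential in $\|w\|_{\xX(\tau_k, \tau_{k+1})}$ with small coefficients, so they can be absorbed into the left-hand side. The second piece of the nonlinearity leaves a term of the form $(C\delta_0/m)\,\|w\|_{\xX_2(0,\tau_{k+1})}$, which via the subadditivity $\|w\|_{\xX_2(0,\tau_{k+1})}^5 = \sum_{j \leq k}\|w\|_{\xX_2(\tau_j, \tau_{j+1})}^5$ feeds a discrete Gronwall-type recursion for $a_k := \|w\|_{\xX(\tau_k, \tau_{k+1})}$. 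Solving this recursion pathwise gives $\|w\|_{\xX(0,T_0)} \leq C(m)^K\,\|u_0 - v_0\|_{L_x^2}$. Taking $L_\omega^{\rho_0}$ norms on both sides and applying H\"older's inequality to separate the factor $C(m)^K$ from the moments of $\|u_0 - v_0\|_{L_x^2}$ yields the result.

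The main obstacle is the non-local nature of the cutoff $\theta_m(\|\cdot\|_{\xX_2(0,s)})$: its value at time $s$ depends on the full history of $u$ (resp.\ $v$) on $[0,s]$, so the estimate on any given subinterval cannot be decoupled from the behaviour on earlier ones. This is exactly what forces the discrete Gronwall iteration and produces the pathwise $C(m)^K$-type growth; ensuring that the moment bounds on $K$ are strong enough to dominate this factor after passing to $L_\omega^{\rho_0}$ is the delicate point, and crucially relies on the availability of arbitrarily high polynomial moments of $u$ and $v$ from Proposition~\ref{pr:uniform_e_m}.
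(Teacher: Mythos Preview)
Your overall strategy is close to the paper's, but there are two genuine gaps.

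\textbf{The stochastic term cannot be absorbed pathwise.} You claim that the stochastic term is ``self-referential in $\|w\|_{\xX(\tau_k,\tau_{k+1})}$ with small coefficients'' and can be absorbed into the left-hand side on each random subinterval. But Proposition~\ref{pr:pre_stochastic} is an $L_\omega^\rho$ estimate, not a pathwise one, so it cannot be applied on the random interval $[\tau_k,\tau_{k+1}]$. If instead you mean to control the stochastic integral pathwise by the dissection threshold, then on each subinterval you only get an \emph{additive} contribution of size $\delta$ (the threshold), not a term of the form $(\text{small})\cdot\|w\|_{\xX(\tau_k,\tau_{k+1})}$. That additive term does not scale with $\|u_0-v_0\|_{L_x^2}$, so the recursion $\|w\|_{\xX(\tau_k,\tau_{k+1})}\leq C\|w(\tau_k)\|_{L_x^2}+C\delta$ cannot yield $\|w\|_{\xX(0,T_0)}\leq C^K\|u_0-v_0\|_{L_x^2}$.

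\textbf{Polynomial moments of $K$ do not control $C(m)^K$.} Even granting a pathwise bound $\|w\|_{\xX(0,T_0)}\leq C(m)^K\|u_0-v_0\|_{L_x^2}$ with random $K$, your final H\"older step requires a finite moment of $C(m)^{pK}$ for some $p>1$, i.e.\ an \emph{exponential} moment of $K$. Having $\EE K^q<\infty$ for all $q$ is not enough: a random variable with stretched-exponential tails $\PP(K\geq n)\sim e^{-\sqrt{n}}$ has all polynomial moments but $\EE\, C^K=\infty$ for any $C>1$. Proposition~\ref{pr:uniform_e_m} only gives polynomial moments of $\|u\|_{\xX_2}$ and $\|v\|_{\xX_2}$, hence only polynomial moments of $K$ under your dissection rule.

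The paper sidesteps both issues by exploiting the truncation $\theta_m$ much more directly. In the inner (random) dissection it includes the indicator $\1_{\{\|u\|_{\xX_2(0,\tau_k)}<2m\}}$ (and similarly for $v$) in the stopping rule; once $\|u\|_{\xX_2}$ exceeds $2m$ the nonlinearity in $u$ vanishes and no further subdivision from $u$ is needed. This forces a \emph{deterministic} bound $K\leq 2(4m\delta^{-1})^5$. The stochastic term is not absorbed pathwise at all: it is kept as the single quantity $M_{a,b}^*$ through the (deterministically many) pathwise iterations, and only after summing and taking $L_\omega^{\rho_0}$ is it bounded via Proposition~\ref{pr:pre_stochastic}, where the factor $(b-a)^{3/10}$ allows absorption at the level of $L_\omega^{\rho_0}$ norms. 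This produces Lemma~\ref{le:stability_short} on a short \emph{deterministic} interval $[a,b]$ with $b-a<h$, which is then iterated $\lceil T_0/h\rceil$ times.
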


The above proposition compares two solutions to the same equation with different initial data. On the other hand, in order to show $\{u_{m,\eps}\}$ is Cauchy in $\eps$, we need to compare $\nN^{\eps_1}(u_{m,\eps_1})$ and $\nN^{\eps_2}(u_{m,\eps_2})$ for $\eps_1 \neq \eps_2$. Since the two nonlinearities carry different powers, we need a priori bound on $L_{x}^{\infty}$ norm of $u_{m,\eps_j}(t)$ to get effective control on their difference. This requires the initial data to be in a more regular space than $L_{x}^{2}$. Hence, we make a small perturbation of initial data to $H_{x}^{1}$. The following proposition guarantees that the solution will still be in $H_{x}^{1}$ up to time $T_0$.

\begin{prop}[Persistence of regularity]
	\label{pr:lwpinhigh}
	Let $\vem \in L_{\omega}^{\rho_0} \xX(0,T_0)$ be the solution to \eqref{eq:trun_sub} with initial datum  $v_0 \in L_{\omega}^{\infty}H_{x}^{1}$. Then, we have the bound
	\begin{equation*}
	\|v_{m,\eps}\|_{L_{\omega}^{\rho_0}\xX^{1}(0,T_0)} \leq C \|v_0\|_{L_{\omega}^{\rho_0}H_{x}^{1}}. 
	\end{equation*}
\end{prop}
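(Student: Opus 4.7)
The plan is to differentiate the Duhamel formula \eqref{eq:trun_sub} in space and then estimate each of the resulting four terms in $L_{\omega}^{\rho_0}\xX(0,T_0)$ using the machinery of Section~\ref{sec:bounds}. Since $\partial_x$ commutes with $\sS(t)$, and the scalar truncation factor $\theta_m(\|v_{m,\eps}\|_{\xX_2(0,s)})$ is unaffected by $\partial_x$, I get
\begin{equation*}
\partial_x v_{m,\eps}(t) = \sS(t)\partial_x v_0 - i\int_0^t \sS(t-s)\theta_m(\cdot)\,\partial_x\nN^\eps(v_{m,\eps}(s))\,{\rm d}s - i\int_0^t \sS(t-s)\,\partial_x\!\big(v_{m,\eps}(s)\,{\rm d}W_s\big) - \tfrac{1}{2}\int_0^t \sS(t-s)\,\partial_x\!\big(F_\Phi v_{m,\eps}(s)\big){\rm d}s.
\end{equation*}
The nonlinear integrand satisfies the pointwise bound $|\partial_x\nN^\eps(v)|\lesssim |v|^{4-\eps}|\partial_x v|$. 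For the stochastic term, the product rule $\partial_x(v\Phi e_k)=(\partial_x v)\Phi e_k + v\,\partial_x(\Phi e_k)$ splits it into an integral against $W$ with integrand $\partial_x v$, plus an integral against a new Gaussian process whose covariance operator $\widetilde\Phi:=\partial_x\circ\Phi$ is still $\gamma$-radonifying into $L_x^\infty$ and $L_x^{5/2}$ by Assumption~\ref{as:Phi}. The correction term splits analogously into $F_\Phi\partial_x v$ and $(\partial_x F_\Phi)v$, and the regularity of $F_\Phi$ is again ensured by Assumption~\ref{as:Phi}.

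Now I apply the estimates of Section~\ref{sec:bounds} on an arbitrary subinterval $[a,b]\subset[0,T_0]$. The linear piece is controlled by $C\|\partial_x v_0\|_{L^2}$ via Proposition~\ref{pr:Strichartz}. The nonlinear term is bounded, exactly as in Proposition~\ref{pr:pre_nonlinear} (with $\partial_x v$ playing the role of the $L_x^2$ factor), by $C\|v_{m,\eps}\|_{\xX_2(a,b)}^{4-\eps}\|\partial_x v_{m,\eps}\|_{\xX_1(a,b)}$. The stochastic term produces two contributions: Proposition~\ref{pr:pre_stochastic} applied with $\sigma=\partial_x v_{m,\eps}$ gives a bound of the form $C(b-a)^{\alpha}\|\Phi\|_{\R(L_x^2,L_x^\infty)\cap\R(L_x^2,L_x^{5/2})}\|\partial_x v_{m,\eps}\|_{L_\omega^{\rho_0}\xX_1(a,b)}$, while the same proposition applied with $\sigma=v_{m,\eps}$ and $\widetilde\Phi$ in place of $\Phi$ gives a contribution controlled by the (already known) norm $\|v_{m,\eps}\|_{L_\omega^{\rho_0}\xX_1(a,b)}$. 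Similarly, Proposition~\ref{pr:pre_correction} handles the two correction pieces, producing a factor $(b-a)^{4/5}$ times a sum of $\|\partial_x v_{m,\eps}\|_{\xX_1(a,b)}$ and $\|v_{m,\eps}\|_{\xX_1(a,b)}$.

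To close the bootstrap I partition $[0,T_0]$ into finitely many random sub-intervals $[\tau_k,\tau_{k+1}]$ chosen so that on each one the combined ``small factor" $(\tau_{k+1}-\tau_k)^{\alpha}+\|v_{m,\eps}\|_{\xX_2(\tau_k,\tau_{k+1})}^{4-\eps}$ is smaller than some absolute constant, which is possible because the uniform bound of Proposition~\ref{pr:uniform_e_m} together with the truncation ensures $\|v_{m,\eps}\|_{\xX_2(0,T_0)}\le 2m$ pathwise, bounding the total number $K$ of sub-intervals by a constant depending only on $m$ and $T_0$. On each such sub-interval, the coefficient of $\|\partial_x v_{m,\eps}\|_{L_\omega^{\rho_0}\xX_1(\tau_k,\tau_{k+1})}$ on the right-hand side can be absorbed into the left, yielding a bound of the form
\begin{equation*}
\|\partial_x v_{m,\eps}\|_{L_\omega^{\rho_0}\xX(\tau_k,\tau_{k+1})}\le C\big(\|\partial_x v_{m,\eps}(\tau_k)\|_{L_\omega^{\rho_0}L_x^2}+\|v_{m,\eps}\|_{L_\omega^{\rho_0}\xX_1(0,T_0)}\big),
\end{equation*}
and then a straightforward iteration over $k=0,\dots,K-1$ (together with the known $\xX$-bound on $v_{m,\eps}$) yields the desired estimate $\|v_{m,\eps}\|_{L_\omega^{\rho_0}\xX^1(0,T_0)}\le C\|v_0\|_{L_\omega^{\rho_0}H_x^1}$. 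The main obstacle I expect is ensuring that the random partition has a deterministic bound on its cardinality after taking $L_\omega^{\rho_0}$; this is handled by the pathwise bound from the truncation, which is why the constant $C$ is allowed to depend on $m$.
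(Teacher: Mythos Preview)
Your outline has the right ingredients --- differentiating the Duhamel formula, the pointwise bound $|\partial_x\nN^\eps(v)|\lesssim|v|^{4-\eps}|\partial_x v|$, splitting the stochastic and correction terms via the product rule, and partitioning into sub-intervals on which the $\xX_2$-increment is small so that the nonlinear term can be absorbed. This is exactly the skeleton of the paper's argument.

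There is, however, a genuine gap in how you combine the random partition with the stochastic estimates. You propose random sub-intervals $[\tau_k,\tau_{k+1}]$ and then write norms such as $\|\partial_x v_{m,\eps}\|_{L_\omega^{\rho_0}\xX(\tau_k,\tau_{k+1})}$, intending to absorb the stochastic contribution using Proposition~\ref{pr:pre_stochastic}. But that proposition (and in particular the Minkowski exchange $L_\omega^{\rho_0}L_t^5\hookrightarrow L_t^5 L_\omega^{\rho_0}$ underlying the $\xX_2$-bound) is stated and proved for \emph{deterministic} intervals; it does not give you a bound on a random interval with the small factor $(\tau_{k+1}-\tau_k)^\alpha$ that you need for absorption. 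Your closing remark correctly flags the cardinality issue, but the more serious obstruction is applying the Burkholder-type estimate itself on random $[\tau_k,\tau_{k+1}]$. (A smaller inaccuracy: the truncation does \emph{not} give $\|v_{m,\eps}\|_{\xX_2(0,T_0)}\le 2m$ pathwise --- the stochastic and correction terms are still present once $\theta_m$ vanishes --- though it does ensure the \emph{nonlinearity} is inactive beyond that threshold, which is what actually bounds the number of sub-intervals.)

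The paper resolves this by a two-level decomposition that cleanly decouples the nonlinear and stochastic absorptions. The outer level is a \emph{deterministic} partition of $[0,T_0]$ into pieces $[a,b]$ of length at most $h$; on each such piece one applies Proposition~\ref{pr:pre_stochastic} to a single supremum quantity $M_{a,b}^{**}$ (defined over the fixed interval $[a,b]$, as in the proof of Lemma~\ref{le:stability_short}) and absorbs it in $L_\omega^{\rho_0}$ using smallness of $h$. Inside each $[a,b]$, one performs the \emph{random} $\delta$-dissection (with the indicator $\1_{\{\|v_{m,\eps}\|_{\xX_2(0,\tau_k)}\le 2m\}}$ to guarantee at most $(4m/\delta)^5$ sub-intervals) and carries out a purely \emph{pathwise} iteration, bounding the stochastic term on each random sub-interval by the single quantity $M_{a,b}^{**}$. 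Only after the pathwise iteration is complete does one take the $L_\omega^{\rho_0}$-norm. This is the content of Lemma~\ref{le:persistence_short}; the proof of Proposition~\ref{pr:lwpinhigh} then combines that lemma with a local $H^1$ existence statement (which you also need but do not mention) to reach time $T_0$.
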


\begin{rmk}
	Note that even if one starts with $L_{\omega}^{\infty}H_x^1$ initial data, we can only have a bound for $\|v_{m,\eps}\|_{L_{\omega}^{\rho}H_x^1}$ for $\rho < +\infty$. This is already enough for us to compare two different nonlinearities, as it implies that the probability of having a large $H_x^1$ norm is small. 
\end{rmk}

\begin{rmk}
Proposition \ref{pr:uniform_stable} and Proposition \ref{pr:lwpinhigh} should be understood as the natural generalisations of \cite[Lemma~3.10, 3.12]{colliander2008global}. 
\end{rmk}

Thanks to the persistence of regularity, we can show the convergence of the solutions when starting from $L_{\omega}^{\infty} H_{x}^{1}$ initial data. This is the following proposition. 

\begin{prop} [Convergence with regular initial data]
	\label{pr:convergence_high}
	Let $v_{m,\eps}$ denotes the solutions to \eqref{eq:trun_sub} with initial data $v_0 \in L_{\omega}^{\infty}H_{x}^{1}$. Then, $\{v_{m,\eps}\}$ is Cauchy in $L_{\omega}^{\rho} \xX(0,T_0)$. 
\end{prop}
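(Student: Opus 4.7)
The plan is to estimate the difference $w^{(\eps_1,\eps_2)} := v_{m,\eps_1} - v_{m,\eps_2}$ in $L_{\omega}^{\rho_0}\xX(0,T_0)$ and show that its norm tends to $0$ as $\eps_1,\eps_2 \to 0$. Subtracting the Duhamel formulas \eqref{eq:duhamel_trun_sub} for $v_{m,\eps_1}$ and $v_{m,\eps_2}$, the initial data cancels, leaving (i) a stochastic integral of $w^{(\eps_1,\eps_2)}$, (ii) the It\^o--Stratonovich correction of $w^{(\eps_1,\eps_2)}$, and (iii) the difference of the two truncated nonlinearities. Contributions (i) and (ii) are linear in $w^{(\eps_1,\eps_2)}$, and by Propositions~\ref{pr:pre_stochastic} and~\ref{pr:pre_correction} they are controlled by $C(b-a)^{\alpha}\|w^{(\eps_1,\eps_2)}\|_{\xX_1(a,b)}$ on every subinterval $[a,b]$ with some $\alpha>0$, so they can be absorbed into the left-hand side on sufficiently short subintervals.

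For the nonlinear difference I would use the standard telescoping split
\begin{equation*}
\begin{split}
& \theta_m\bigl(\|v_{m,\eps_1}\|_{\xX_2(0,s)}\bigr)\bigl[\nN^{\eps_1}(v_{m,\eps_1})-\nN^{\eps_1}(v_{m,\eps_2})\bigr] \\
&\quad + \theta_m\bigl(\|v_{m,\eps_1}\|_{\xX_2(0,s)}\bigr)\bigl[\nN^{\eps_1}(v_{m,\eps_2})-\nN^{\eps_2}(v_{m,\eps_2})\bigr] \\
&\quad + \bigl[\theta_m\bigl(\|v_{m,\eps_1}\|_{\xX_2(0,s)}\bigr)-\theta_m\bigl(\|v_{m,\eps_2}\|_{\xX_2(0,s)}\bigr)\bigr]\nN^{\eps_2}(v_{m,\eps_2}).
\end{split}
\end{equation*}
Pieces one and three are Lipschitz in their $v$-arguments: piece one via the pointwise bound $|\nN^{\eps}(u)-\nN^{\eps}(v)| \lesssim (|u|^{4-\eps}+|v|^{4-\eps})|u-v|$ combined with the $\xX_2$-support constraint from $\theta_m$, and piece three via the $m^{-1}$-Lipschitz constant of $\theta_m$ together with the uniform $\xX$-bound on $\theta_m\nN^{\eps_2}(v_{m,\eps_2})$ coming from the truncation. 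After Proposition~\ref{pr:pre_nonlinear} and Strichartz, both of these pieces are bounded by $C_m \|w^{(\eps_1,\eps_2)}\|_{\xX(a,b)}$ with a small time factor, hence absorbable on short intervals.

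The genuine $\eps$-dependent contribution is the middle piece. The mean value theorem applied to $\xi \mapsto |v|^{4-\xi}$ gives the pointwise bound $|\nN^{\eps_1}(v)-\nN^{\eps_2}(v)| \leq |\eps_1-\eps_2| \cdot |v|^{5-\xi}|\log|v||$ for some $\xi$ between $\eps_1$ and $\eps_2$; since $x^{3-\xi}|\log x|$ is bounded on $[0,R]$ for every fixed $R>0$, this majorises $C(\|v\|_{L_x^{\infty}})|v|^{2}|\eps_1-\eps_2|$ pointwise in $x$, so
\begin{equation*}
\bigl\|\nN^{\eps_1}(v_{m,\eps_2}(t))-\nN^{\eps_2}(v_{m,\eps_2}(t))\bigr\|_{L_x^{2}} \lesssim C\bigl(\|v_{m,\eps_2}\|_{L_{t,x}^{\infty}}\bigr)\cdot |\eps_1-\eps_2| \cdot \|v_{m,\eps_2}(t)\|_{L_x^{4}}^{2}.
\end{equation*}
Proposition~\ref{pr:lwpinhigh} together with the one-dimensional Sobolev embedding $H_x^{1}\hookrightarrow L_x^{\infty}$ bounds $\|v_{m,\eps_2}\|_{L_{t,x}^{\infty}}$ and $\|v_{m,\eps_2}\|_{L_t^{\infty}L_x^{4}}$ in $L_\omega^{\rho}$ for every finite $\rho$, so a dual-Strichartz estimate and H\"older in $\omega$ (with exponent slightly above $\rho_0$) yield an $L_{\omega}^{\rho_0}\xX(0,T_0)$ bound on this piece of order $C_m|\eps_1-\eps_2|$.

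Finally, I would combine these estimates via a bootstrap on a random dissection of $[0,T_0]$ analogous to that in the proof of Proposition~\ref{pr:uniform_e_m} (chosen so that the stochastic and truncated-nonlinear pieces become small in $\xX_2$ on each subinterval), and iterate over its a.s. finite number of subintervals to conclude $\|w^{(\eps_1,\eps_2)}\|_{L_\omega^{\rho_0}\xX(0,T_0)} \leq C_m |\eps_1-\eps_2| \to 0$. The main obstacle is the middle piece: the logarithmic singularity $|\log|v||$ must be absorbed into the polynomial factor $|v|^{5-\xi}$ to avoid a small-$v$ blow-up, while the high-end bound requires the pathwise $L_x^{\infty}$ a priori control coming from $H_x^{1}$-regularity, which is only $L_\omega^{\rho}$-integrable rather than $L_\omega^{\infty}$; handling this forces the H\"older step in $\omega$ with exponent slightly above $\rho_0$, which is legitimate thanks to the $L_\omega^{\rho}\xX^{1}$-bounds from Proposition~\ref{pr:lwpinhigh} for every finite $\rho$.
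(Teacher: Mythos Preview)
Your overall strategy---telescoping the nonlinear difference, using $H^1$-control for the different-power piece, and absorbing the Lipschitz pieces via a dissection---matches the paper's approach. Two technical points need correction.

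First, there is \emph{no} small time factor for pieces one and three: the factor $(b-a)^{\eps/4}$ from Proposition~\ref{pr:pre_nonlinear} degenerates to $1$ as $\eps\to0$, which is exactly the critical difficulty. The smallness must come from the random dissection you mention at the end (choosing subintervals on which $\|v_{m,\eps_j}\|_{\xX_2}\le\delta$), giving a factor $\delta^{4-\eps}$ in front of $\|w\|$, not a time factor. The ingredients are in your proposal, but the mechanism is misattributed.

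Second, the paper's iteration is two-level, and this structure is essential. Inside each short deterministic interval $[a,b]$ with $b-a<h$, the random $\xX_2$-dissection has a \emph{deterministically bounded} number of subintervals (at most $2(4m/\delta)^5$, thanks to the truncation), so one can iterate \emph{pathwise} and only afterwards take the $L_\omega^{\rho_0}$-norm; at that stage the stochastic remainder $M_{a,b}^*$ carries the factor $(b-a)^{3/10}\le h^{3/10}$ and is absorbed. A second, deterministic backward iteration over the $\lceil T_0/h\rceil$ intervals of length $h$ then covers $[0,T_0]$. Your single dissection ``analogous to Proposition~\ref{pr:uniform_e_m}'' conflates the pathwise and averaged stages: the small time factor for the stochastic piece appears only after $L_\omega^{\rho_0}$-averaging, whereas the nonlinear smallness from the $\xX_2$-dissection is pathwise. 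Running both in one loop over all of $[0,T_0]$ does not close unless $T_0$ itself happens to be small enough for the stochastic absorption, which is not guaranteed. Note also that piece three produces $\|w\|_{\xX_2(0,s)}$ (global in time), not $\|w\|_{\xX(a,b)}$, so the iteration must track $\|w\|_{\xX(0,\tau_k)}$ rather than just the local increment.

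Your treatment of the middle piece via direct H\"older in $\omega$ is a legitimate variant of the paper's good-set/bad-set split $\Omega_\Lambda = \{\|v_{m,\eps_j}\|_{L_t^\infty H_x^1}\le\Lambda\}$, and in fact yields the cleaner quantitative bound $C_m|\eps_1-\eps_2|$; the paper only extracts a qualitative Cauchy condition by first sending $\Lambda\to\infty$, then $\eps^*\to0$.
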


\begin{remark}
Note that the proof of Proposition~\ref{pr:converge_e} does not rely on the small initial data assumption. It is the uniform bound in both $\eps$ and $m$ in Proposition~\ref{pr:uniform_e_m} that requires the initial data to be small in $L_{\omega}^{\infty}L_{x}^{2}$. This uniformity in $m$ is essential for us to remove the truncation later in Section~\ref{sec:converge_m}. Hence, for consistency of the assumptions, we assume $\|u_0\|_{L_{\omega}^{\infty}L_{x}^{2}} \leq \delta_0$ throughout the article, including this section. This also enables us to use Proposition~\ref{pr:uniform_e_m} below rather than having another statement without assuming smallness of initial data but allowing dependence on $m$. 
\end{remark}

The three propositions above are all the ingredients we need. We will prove them in the next three subsections, and combine them together to prove Proposition~\ref{pr:converge_e} in the last subsection.

\subsection{Uniform stability -- proof of Proposition~\ref{pr:uniform_stable}}

The key ingredient to prove Proposition~\ref{pr:uniform_stable} is the following lemma.

\begin{lem} \label{le:stability_short}
	There exist $h>0$ and $C = C_m$ such that
	\begin{equation*}
	\|\uem - \vem\|_{L_{\omega}^{\rho_0} \xX(0,b)} \leq C \|u_{m,\eps} - v_{m,\eps}\|_{L_{\omega}^{\rho_0} \xX(0,a)}
	\end{equation*}
	whenever $[a,b] \subset [0,T_0]$ satisfies $b-a < h$. The bound is uniform over all $\eps \in (0,1)$. 
\end{lem}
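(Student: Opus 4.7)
Set $w \eqdef u_{m,\eps} - v_{m,\eps}$; on $[a,b]$ I view $w$ as a stochastic Cauchy problem with initial datum $w(a)$ and set up a short-time estimate in $L^{\rho_0}_{\omega}\xX(a,b)$. The trivial splittings $\|w\|_{\xX_1(0,b)} \leq \|w\|_{\xX_1(0,a)} + \|w\|_{\xX_1(a,b)}$ and $\|w\|_{\xX_2(0,b)}^5 = \|w\|_{\xX_2(0,a)}^5 + \|w\|_{\xX_2(a,b)}^5$ then reduce the claim to bounding $\|w\|_{L^{\rho_0}_{\omega}\xX(a,b)}$ by an $m$-dependent multiple of $\|w\|_{L^{\rho_0}_{\omega}\xX(0,a)}$, modulo terms that can be absorbed into the left-hand side once $h = b-a$ is sufficiently small.

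The four Duhamel pieces of $w$ on $[a,b]$ are handled separately. For the linear propagation, Strichartz together with the pathwise inequality $\|w(a)\|_{L^2_x} \leq \|w\|_{\xX_1(0,a)}$ gives $\|\sS(\cdot - a) w(a)\|_{L^{\rho_0}_{\omega} \xX(a,b)} \lesssim \|w\|_{L^{\rho_0}_{\omega} \xX(0,a)}$. The stochastic term $\int_a^t \sS(t-s) w(s)\,dW_s$ and the It\^o--Stratonovich correction $\int_a^t \sS(t-s)(F_\Phi w(s))\,ds$ contribute, by Propositions~\ref{pr:pre_stochastic} and~\ref{pr:pre_correction}, at most $O(h^{3/10})$ and $O(h^{4/5})$ times $\|w\|_{L^{\rho_0}_{\omega}\xX(a,b)}$; these are absorbable by choosing $h$ small.

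The substantive work is the difference of nonlinearities, which I decompose as
\begin{equation*}
\theta_m\bigl(\|u\|_{\xX_2(0,s)}\bigr)\bigl[\nN^\eps(u)-\nN^\eps(v)\bigr] + \bigl[\theta_m\bigl(\|u\|_{\xX_2(0,s)}\bigr)-\theta_m\bigl(\|v\|_{\xX_2(0,s)}\bigr)\bigr]\nN^\eps(v).
\end{equation*}
Whenever the cut-off $\theta_m(\|u\|_{\xX_2(0,s)})$ is non-zero, the truncation enforces $\|u\|_{\xX_2(a,s)} \leq 2m$, so combining the pointwise inequality $|\nN^\eps(u)-\nN^\eps(v)| \lesssim (|u|^{4-\eps}+|v|^{4-\eps})|w|$ with Proposition~\ref{pr:pre_nonlinear} yields a pathwise bound of the form $C_m h^{\eps/4} \|w\|_{\xX(a,b)}$ for the first summand. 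The second summand is treated analogously, exploiting the Lipschitz estimate $|\theta_m(x)-\theta_m(y)| \lesssim m^{-1}|x-y|$ together with $|\nN^\eps(v)| \lesssim |v|^{5-\eps}$.

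The main obstacle I foresee is that the gain $h^{\eps/4}$ degenerates to $1$ as $\eps\to 0$, so it alone cannot absorb the nonlinear contribution uniformly in $\eps$. My plan to circumvent this is to apply H\"older's inequality in $\omega$ with slightly unbalanced exponents, placing the $(4-\eps)$-th powers of $\|u\|_{\xX_2}$ and $\|v\|_{\xX_2}$ into a high $L^{\rho_1}_{\omega}$ (where they are bounded by $(C_{\rho_1}\delta_0)^{4-\eps}$ uniformly in $\eps,m$ by Proposition~\ref{pr:uniform_e_m}), and $\|w\|_{\xX(a,b)}$ into $L^{\rho_0+\delta}_{\omega}$. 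Interpolating the latter between $\|w\|_{L^{\rho_0}_{\omega}\xX(a,b)}$ and the uniform bound $\|w\|_{L^{\rho}_{\omega}\xX(0,T_0)} \lesssim \delta_0$ produces a multiplier that is small thanks to the smallness of $\delta_0$. Choosing $h$ and $\delta_0$ sufficiently small then closes the estimate up to a multiplicative constant $C_m$, yielding the desired bound.
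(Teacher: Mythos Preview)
Your proposed closing argument for the nonlinearity has a genuine gap. After H\"older in $\omega$ and interpolation you obtain, for the difference of nonlinearities, a contribution of the form
\[
(\text{small})\cdot \|w\|_{L^{\rho_0}_\omega\xX(a,b)}^{\theta}\cdot \|w\|_{L^{\rho}_\omega\xX(0,T_0)}^{1-\theta}
\leq (\text{small})\cdot \|w\|_{L^{\rho_0}_\omega\xX(a,b)}^{\theta}
\]
with $\theta<1$. This is \emph{sublinear} in the unknown $x=\|w\|_{L^{\rho_0}_\omega\xX(a,b)}$, so it cannot be absorbed into the left-hand side as a linear term. If you linearise it via Young's inequality you produce an additive constant independent of $w$, and the resulting bound is $\|w\|_{L^{\rho_0}_\omega\xX(0,b)}\leq C_m\|w\|_{L^{\rho_0}_\omega\xX(0,a)}+c$ with $c>0$ fixed, which is strictly weaker than the lemma and useless for iteration. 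There is also a secondary issue: in your first summand $\theta_m(\|u\|)[\nN^\eps(u)-\nN^\eps(v)]$ the truncation only constrains $\|u\|_{\xX_2}$, not the $|v|^{4-\eps}$ factor appearing in the pointwise bound, so the pathwise estimate $C_m h^{\eps/4}\|w\|_{\xX(a,b)}$ you claim is not justified as written.

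The paper circumvents both problems by working \emph{pathwise} with a further random subdivision of $[a,b]$. One picks a small $\delta>0$ and defines stopping times $a=\tau_0<\tau_1<\cdots<\tau_K=b$ so that on each $\iI_{k+1}=[\tau_k,\tau_{k+1}]$ one has $\|u_{m,\eps}\|_{\xX_2(\iI_{k+1})}+\|v_{m,\eps}\|_{\xX_2(\iI_{k+1})}\leq \delta$ (whenever the corresponding truncations are still active). On each such subinterval the nonlinearity contributes the factor $\delta^{4-\eps}$, which is small \emph{uniformly in $\eps$}, and one obtains a genuine linear bound that can be absorbed pathwise. The truncation at level $m$ guarantees $K\lesssim (m/\delta)^5$, so iterating over the $\iI_k$'s produces the $m$-dependent constant $C_m$. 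Only after this pathwise iteration does one take $L^{\rho_0}_\omega$ and use the $h^{3/10}$ gain from Proposition~\ref{pr:pre_stochastic} to absorb the stochastic contribution. The missing idea in your plan is precisely this random subdivision that converts the large pathwise factor $(2m)^{4-\eps}$ into a small factor $\delta^{4-\eps}$ at the cost of finitely many iterations.
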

\begin{proof}
	For every $a \leq \tau \leq t \leq b$, let
	\begin{equation*}
	\begin{split}
	\dD_{\tau}^{\gG}(t) = - i \int_{\tau}^{t} &\sS(t-s) \Big( \theta_{m}\big( \|u_{m,\eps}\|_{\xX_{2}(0,s)} \big) \nN^{\eps} \big(u_{m,\eps}(s)\big)\\
	&- \theta_{m}\big( \|v_{m,\eps}\|_{\xX_{2}(0,s)} \big) \nN^{\eps} \big(v_{m,\eps}(s)\big) \Big) {\rm d}s, 
	\end{split}
	\end{equation*}
	and
	\begin{equation*}
	\dD_{\tau}^{\qQ}(t) = - i \int_{\tau}^{t} \sS(t-s) \big( u_{m,\eps}(s) - v_{m,\eps}(s) \big) {\rm d} W_s. 
	\end{equation*}
	Note that as a process, $\dD_{\tau}^{\qQ}(\cdot)$ is defined pathwise even though the integral on the right hand side above is not. 
	
	For every $\omega \in \Omega$, we choose a dissection $\{\tau_k\}$ of the interval $[a,b]$ as follows. Let $\tau_0 = a$. Suppose $a = \tau_0 < \cdots < \tau_{k} < b$ is chosen, we choose $\tau_{k+1}$ by
	\begin{equation} \label{eq:stability_dissection}
	\begin{split}
	\tau_{k+1} \eqdef b \wedge \inf \Big\{ &r>\tau_k: \1_{\{\|u_{m,\eps}\|_{\xX_{2}(0,\tau_k)} < 2m\}} \|u_{m,\eps}\|_{\xX_{2}(\tau_k,r)}\\
	&+ \1_{\{\|v_{m,\eps}\|_{\xX_{2}(0,\tau_k)} < 2m\}} \|v_{m,\eps}\|_{\xX_{2}(\tau_k,r)} \geq \delta \Big\}, 
	\end{split}
	\end{equation}
	where $\delta>0$ is a small number to be specified later. In this way, we get a random dissection
	\begin{equation*}
	a = \tau_0 < \tau_1 < \cdots < \tau_K = b. 
	\end{equation*}
	Note that the total number of subintervals is always bounded by
	\begin{equation*}
	K \leq 2 \times (4m \delta^{-1})^{5}. 
	\end{equation*}
	For such a random dissection, we let $\iI_{k+1} = [\tau_{k}, \tau_{k+1}]$. For every $k = 0, \dots, K-1$, and every $t \in \iI_{k+1}$, we have
	\begin{equation*}
	\begin{split}
	u_{m,\eps}(t) - v_{m,\eps}(t) &= e^{i (t-\tau_k) \Delta} \big( u_{m,\eps}(\tau_k) - v_{m,\eps}(\tau_k) \big) + \dD_{\tau_k}^{\gG}(t) + \dD_{\tau_k}^{\qQ}(t)\\
	&- \frac{1}{2} \int_{\tau_k}^{t} \sS(t-s) \Big( F_{\Phi} \big( u_{m,\eps}(s) - v_{m,\eps}(s) \big) \Big) {\rm d}s. 
	\end{split}
	\end{equation*}
	We now control the $\xX(\iI_{k+1})$ norm of the four terms on the right hand side separately. For the term with the initial data, it follows immediately from Strichartz estimates that
	\begin{equation} \label{eq:stability_initial}
	\big\|e^{i (t-\tau_k) \Delta} \big( u_{m,\eps}(\tau_k) - v_{m,\eps}(\tau_k) \big) \big\|_{\xX(\iI_{k+1})} \leq C \|u_{m,\eps}-v_{m,\eps}\|_{\xX(0,\tau_k)}. 
	\end{equation}
	As for the correction term, by Proposition~\ref{pr:pre_correction}, we have
	\begin{equation} \label{eq:stability_correction}
	\Big\| \int_{\tau_k}^{t} \sS(t-s) \Big( F_{\Phi} \big( u_{m,\eps}(s) - v_{m,\eps}(s) \big) \Big) {\rm d}s \Big\|_{\xX(\iI_{k+1})} \leq C (b-a)^{\frac{4}{5}} \|u_{m,\eps} - v_{m,\eps}\|_{\xX(\iI_{k+1})}. 
	\end{equation}
	As for the stochastic term, we define
	\begin{equation*}
	\begin{split}
	M_{1}^{*}(t) &= \sup_{a \leq r_1 \leq r_2 \leq t} \Big\| \int_{r_1}^{r_2} \sS(t-s) \big( \uem(s) - \vem(s) \big) {\rm d} W_s \Big\|_{L_{x}^{2}};\\
	M_{2}^{*}(t) &= \sup_{a \leq r_1 \leq r_2 \leq t} \Big\| \int_{r_1}^{r_2} \sS(t-s) \big( \uem(s) - \vem(s) \big) {\rm d} W_s \Big\|_{L_{x}^{10}}. 
	\end{split}
	\end{equation*}
	We then have
	\begin{equation} \label{eq:stability_stochastic}
	\sup_{k} \|\dD_{\tau_k}^{\qQ}\|_{\xX(\iI_{k+1})} \leq \|M_{1}^{*}\|_{L_{t}^{\infty}(\iI)} + \|M_{2}^{*}\|_{L_{t}^{5}(\iI)} =: M_{a,b}^{*}, 
	\end{equation}
	where $\iI = [a,b]$. By Proposition~\ref{pr:pre_stochastic}, we have the bound
	\begin{equation} \label{eq:stability_stochastic_bound}
	\|M_{a,b}^{*}\|_{L_{\omega}^{\rho_0}} \leq C (b-a)^{\frac{3}{10}} \|u_{m,\eps} - v_{m,\eps}\|_{L_{\omega}^{\rho_0} \xX(a,b)}. 
	\end{equation}
	We finally turn to the nonlinearity $\dD_{\tau_k}^{\gG}$. For this, we consider situations according to whether $\|u_{m,\eps}\|_{\xX_{2}(0,\tau_k)}$ and $\|v_{m,\eps}\|_{\xX_{2}(0,\tau_k)}$ have reached $2m$ or not. 
	
	\begin{flushleft}
		\textit{Situation 1.}
	\end{flushleft}
	If both $\|u_{m,\eps}\|_{\xX_{2}(0,\tau_k)}$ and $\|v_{m,\eps}\|_{\xX_{2}(0,\tau_k)}$ are smaller than $2m$, then according to choice of $\tau_{k+1}$, we have
	\begin{equation} \label{eq:stability_increment}
	\|u_{m,\eps}\|_{\xX_{2}(\iI_{k+1})} + \|v_{m,\eps}\|_{\xX_{2}(\iI_{k+1})} \leq \delta. 
	\end{equation}
	As for the term inside the parenthesis after $\sS(t-s)$, we have the pointwise bound
	\begin{equation*}
	\begin{split}
	&\phantom{111}\Big| \theta_{m}\big(\|\uem\|_{\xX_{2}(0,s)}\big) \nN^{\eps}\big(u_{m,\eps}(s)\big) - \theta_{m}\big(\|\vem\|_{\xX_{2}(0,s)}\big) \nN^{\eps}\big(v_{m,\eps}(s)\big) \Big|\\
	&\leq \Big| \theta_{m}\big(\|\uem\|\big) \nN^{\eps}\big( \uem \big) - \theta_{m}\big(\|\vem\|\big) \nN^{\eps}\big(\uem\big) \Big|\\
	&+ \Big| \theta_{m}\big(\|\vem\|\big) \nN^{\eps}\big(u_{m,\eps}\big) - \theta_{m}\big(\|\vem\|\big) \nN^{\eps}\big(v_{m,\eps}\big) \Big|\\
	&\leq C \Big( \|u_{m,\eps} - v_{m,\eps}\|_{\xX_{2}(0,\tau_{k+1})} |u_{m,\eps}|^{5-\eps} + |\uem-\vem| \big( |\uem|^{4-\eps} + |\vem|^{4-\eps} \big)\Big), 
	\end{split}
	\end{equation*}
	where we have omitted the dependence of various quantities on $s$ after the first line, and also have made the relaxation
	\begin{equation*}
	\Big| \|u_{m,\eps}\|_{\xX_{2}(0,s)} - \|v_{m,\eps}\|_{\xX_{2}(0,s)} \Big| \leq \|u_{m,\eps} - v_{m,\eps}\|_{\xX_{2}(0,s)} \leq \|u_{m,\eps} - v_{m,\eps}\|_{\xX_{2}(0,\tau_{k+1})}
	\end{equation*}
	in the first term in the last inequality. Applying Strichartz estimates \eqref{eq:Strichartz_2} and making use of the small increment \eqref{eq:stability_increment}, we get
	\begin{equation*}
	\begin{split}
	\|\dD_{\tau_k}^{\gG}\|_{\xX(\iI_{k+1})} &\leq C \Big( \|u_{m,\eps}-v_{m,\eps}\|_{\xX(0,\tau_{k+1})} \|u_{m,\eps}\|_{\xX_{1}(\iI_{k+1})} \|u_{m,\eps}\|_{\xX_{2}(\iI_{k+1})}^{4-\eps}\\
	&\phantom{1111}+ \|u_{m,\eps}-v_{m,\eps}\|_{\xX_{1}(\iI_{k+1})} \big( \|u_{m,\eps}\|_{\xX_{2}(\iI_{k+1})} + \|v_{m,\eps}\|_{\xX_{2}(\iI_{k+1})}\big)\Big)\\
	&\leq C \Big( \|u_{m,\eps}-v_{m,\eps}\|_{\xX(0,\tau_{k})} + \delta^{4-\eps} \|\uem-\vem\|_{\xX(\iI_{k+1})} \Big), 
	\end{split}
	\end{equation*}
	where in the last line we have split $\|u_{m,\eps}-v_{m,\eps}\|_{\xX(0,\tau_{k+1})}$ into two disjoint regions $[0,\tau_k]$ and $\iI_{k+1}$, and merged the latter with the other term. 
	
	\begin{flushleft}
		\textit{Situation 2.}
	\end{flushleft}
    We turn to the case where $\|u_{m,\eps}\|_{\xX_{2}(0,\tau_k)} < 2m$ but $\|v_{m,\eps}\|_{\xX_{2}(0,\tau_k)} \geq 2m$. In this case, the choice of $\tau_{k+1}$ in \eqref{eq:stability_dissection} gives
    \begin{equation*}
    \|u_{m,\eps}\|_{\xX_{2}(\iI_{k+1})} \leq \delta. 
    \end{equation*}
    Since $\theta_{m}(\|v_{m,\eps}\|_{\xX_{2}(0,s)})$ vanishes for every $s>\tau_k$, we have the pointwise bound
    \begin{equation*}
    \begin{split}
    &\phantom{111}\Big| \theta_{m} \big(\|u_{m,\eps}\| \big) \nN^{\eps}(u_{m,\eps}) - \theta_{m} \big(\|v_{m,\eps}\| \big) \nN^{\eps}(v_{m,\eps}) \Big|\\
    &= \Big| \theta_{m} \big(\|u_{m,\eps}\| \big) \nN^{\eps}(u_{m,\eps}) - \theta_{m} \big(\|v_{m,\eps}\| \big) \nN^{\eps}(u_{m,\eps}) \Big|\\
    &\leq C \|u_{m,\eps} - v_{m,\eps}\|_{\xX_{2}(0,\tau_{k+1})} |u_{m,\eps}|^{5-\eps}, 
    \end{split}
    \end{equation*}
    where we have omitted the dependence of $\uem$ and $\vem$ on $s$ in the first two lines. Similar as before, applying Strichartz estimate \eqref{eq:Strichartz_2}, we get the bound
    \begin{equation*}
    \|\dD_{\tau_k}^{\gG}\|_{\xX(\iI_{k+1})} \leq C \delta_{0} \delta^{4-\eps} \Big( \|u_{m,\eps} - v_{m,\eps}\|_{\xX(0,\tau_k)} + \|u_{m,\eps} - v_{m,\eps}\|_{\xX(\iI_{k+1})} \Big). 
    \end{equation*}
    The right hand side above is symmetric in $\uem$ and $\vem$, so we have exactly the same bound in the case $\|\uem\|_{\xX_{2}(0,\tau_k)} \geq 2m$ but $\|\vem\|_{\xX_{2}(0,\tau_k)} < 2m$. 
    
    \begin{flushleft}
    	\textit{Situation 3.}
    \end{flushleft}
    If both $\|u_{m,\eps}\|_{\xX_{2}(0,\tau_k)}$ and $\|v_{m,\eps}\|_{\xX_{2}(0,\tau_k)}$ reaches $2m$, then both nonlinearities vanish, and hence $\|\dD_{\tau_k}^{\gG}\|_{\xX(\iI_{k+1})} = 0$. 
    
    \bigskip
    
    \bigskip
    
    Thus, in all of the situations, we always have the bound
    \begin{equation} \label{eq:stability_nonlinear}
    \|\dD_{\tau_k}^{\gG}\|_{\xX(\iI_{k+1})} \leq C \Big( \|u_{m,\eps}-v_{m,\eps}\|_{\xX(0,\tau_{k})} + \delta^{4-\eps} \|\uem-\vem\|_{\xX(\iI_{k+1})} \Big). 
    \end{equation}
    Combining \eqref{eq:stability_initial}, \eqref{eq:stability_correction}, \eqref{eq:stability_stochastic} and \eqref{eq:stability_nonlinear}, we get
    \begin{equation*}
    \begin{split}
    \|u_{m,\eps} - v_{m,\eps}\|_{\xX(\iI_{k+1})} \leq C &\Big( \big( (b-a)^{\frac{4}{5}} + \delta^{4-\eps} \big) \|u_{m,\eps} - v_{m,\eps}\|_{\xX(\iI_{k+1})}\\
    &+ \|u_{m,\eps}-v_{m,\eps}\|_{\xX(0,\tau_k)} \Big) + M_{a,b}^{*}. 
    \end{split}
    \end{equation*}
    If both $\delta$ and $b-a<h$ are small enough, we can absorb the term $\|u_{m,\eps}-v_{m,\eps}\|_{\xX(\iI_{k+1})}$ into the left hand side. This gives
    \begin{equation*}
    \|u_{m,\eps} - v_{m,\eps}\|_{\xX(\iI_{k+1})} \leq C \Big( \|u_{m,\eps} - v_{m,\eps}\|_{\xX(0,\tau_k)} + M_{a,b}^{*} \Big). 
    \end{equation*}
    Since there are at most $2 \times (4m \delta^{-1})^{5}$ subintervals in this dissection, iterating the above bound from for all the $\iI_k$'s and adding $\|u_{m,\eps} - v_{m,\eps}\|_{\xX(0,a)}$ to both sides, we obtain
    \begin{equation} \label{eq:stability_deterministic}
    \|u_{m,\eps} - v_{m,\eps}\|_{\xX(0,b)} \leq C \Big( \|u_{m,\eps} - v_{m,\eps}\|_{\xX(0,a)} + M_{a,b}^{*} \Big). 
    \end{equation}
    So far, all the arguments are deterministic, and the bound \eqref{eq:stability_deterministic} holds almost surely. Moreover, the proportionality constant $C$ is deterministic, and depends on $m$ only. 
    
    We now take $L_{\omega}^{\rho_0}$ norm on both sides of \eqref{eq:stability_deterministic}. By \eqref{eq:stability_stochastic_bound}, if $h$ is small enough, we can again absorb the term $\|u_{m,\eps}-v_{m,\eps}\|_{L_{\omega}^{\rho_0}\xX(a,b)}$ arising from $\|M_{a,b}^{*}\|_{L_{\omega}^{\rho_0}}$ into the left hand side. Hence, we finally have the bound
    \begin{equation*}\|u_{m,\eps} - v_{m,\eps}\|_{L_{\omega}^{\rho_0}\xX(0,b)} \leq C \|u_{m,\eps} - v_{m,\eps}\|_{L_{\omega}^{\rho_0}\xX(0,a)}. 
    \end{equation*}
    This completes the proof. 
\end{proof}

We are now ready to prove Proposition~\ref{pr:uniform_stable}. 

\begin{proof} [Proof of Proposition~\ref{pr:uniform_stable}]
	The key is to note that the smallness of $h$ in order for Lemma~\ref{le:stability_short} to be true depends only on the universal constants from Strichartz estimates, and hence is also universal itself. We can then iterate Lemma~\ref{le:stability_short} with small intervals of length $h$ up to time $T_0$. This completes the proof. 
\end{proof}

\subsection{Persistence of  regularity -- proof of Proposition~\ref{pr:lwpinhigh}}

Let $v_0 \in L_{\omega}^{\infty}H_{x}^{1}$ with $\|v_0\|_{L_{\omega}^{\infty}L_{x}^{2}} \leq \delta_0$. Recall that the $\|\cdot\|_{\xX^1(\iI)}$ norm is given by
\begin{equation*}
\|v\|_{\xX^1(\iI)} := \|v\|_{\xX(\iI)} + \|\d_x v\|_{\xX(\iI)}. 
\end{equation*}
The local existence of the $H^1$ solution is standard. More precisely, there exists $R_0 > 0$ depending on $\|v_0\|_{L_{\omega}^{\rho_0}H_{x}^{1}}$ only such that for every $\eps>0$, there is a unique $v_{m,\eps} \in L_{\omega}^{\rho_0} \xX^{1}(0,R_0)$ that solves \eqref{eq:trun_sub} on $[0,R_0]$ with initial data $v_0$. We want to show that the solution actually exists on $[0,T_0]$ (if $R_0 < T_0$), and satisfies the bound
\begin{equation*}
\|v_{m,\eps}\|_{L_{\omega}^{\rho_0} \xX^{1}(0,T_0)} \lesssim_{m} \|v_0\|_{L_{\omega}^{\rho_0} H_x^1}
\end{equation*}
uniformly in $\eps$. The key ingredient is the following lemma, which shows the persistence of regularity of $H^1$ solutions. 

\begin{lem} \label{le:persistence_short}
	There exist $h>0$ and $C = C_m$ such that if $R \leq T_0$ and $v_{m,\eps}$ solves \eqref{eq:trun_sub} in $L_{\omega}^{\rho_0} \xX^{1}(0,R)$ with initial data $v_0$, then we have the bound
	\begin{equation} \label{eq:persistence_short}
	\|v_{m,\eps}\|_{L_{\omega}^{\rho_0}\xX^{1}(0,b)} \leq C \|v_{m,\eps}\|_{L_{\omega}^{\rho_0} \xX^{1}(0,a)}
	\end{equation}
	whenever $[a,b] \subset [0,R]$ satisfies $b-a<h$. As a consequence, we have
	\begin{equation} \label{eq:persistence_medium}
	\|v_{m,\eps}\|_{L_{\omega}^{\rho_0} \xX^{1}(0,R)} \leq C^{1+\frac{T_0}{h}} \|v_0\|_{L_{\omega}^{\rho_0}H_{x}^{1}}. 
	\end{equation}
	Here, the constant $C$ depends on $m$ but is uniform over all $\eps \in (0,1)$ and all $R \leq T_0$. 
\end{lem}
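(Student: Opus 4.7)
My plan is to follow the dissection-and-bootstrap structure of the proof of Lemma~\ref{le:stability_short}, but now applied to $v_{m,\eps}$ itself (rather than to a difference) and in the higher-regularity space $\xX^1$. Since $\|v_{m,\eps}\|_{\xX(0,T_0)}$ is already controlled through Proposition~\ref{pr:uniform_e_m} and mass conservation, the task reduces to obtaining an analogous bound for $\|\d_x v_{m,\eps}\|_{\xX}$. I would formally differentiate the Duhamel formula~\eqref{eq:duhamel_trun_sub} in $x$: $\d_x v_{m,\eps}$ then satisfies a formally identical equation driven by $\sS(t)\d_x v_0$, by the derivative of the truncated nonlinear term (the truncation $\theta_m(\|v_{m,\eps}\|_{\xX_2(0,s)})$ does not depend on $x$ and is thus unaffected), by the stochastic integral whose integrand splits as $\d_x(v_{m,\eps}\cdot \Phi e_k) = (\d_x v_{m,\eps})(\Phi e_k) + v_{m,\eps}(\d_x \Phi e_k)$, and by the correction integral applied to $\d_x(F_\Phi v_{m,\eps}) = F_\Phi \d_x v_{m,\eps} + (\d_x F_\Phi) v_{m,\eps}$.

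\textbf{Estimates on each sub-interval.} Using a random dissection of the type~\eqref{eq:stability_dissection} adapted to the single process $v_{m,\eps}$, I partition $[a,b]$ into at most $O_m(1)$ sub-intervals $\iI_{k+1}$ on each of which either $\|v_{m,\eps}\|_{\xX_2(\iI_{k+1})}\le \delta$ or the truncation vanishes. On each $\iI_{k+1}$: the pointwise inequality $|\d_x \nN^{\eps}(v_{m,\eps})|\lesssim |v_{m,\eps}|^{4-\eps}|\d_x v_{m,\eps}|$ combined with Proposition~\ref{pr:pre_nonlinear} yields a contribution of the form $\delta^{4-\eps}\|\d_x v_{m,\eps}\|_{\xX(\iI_{k+1})}$. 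The correction integral is bounded by Proposition~\ref{pr:pre_correction} using the finiteness of $\|F_\Phi\|_{W^{1,\infty}}$ and $\|F_\Phi\|_{W^{1,5/2}}$ guaranteed by Assumption~\ref{as:Phi}, producing a factor $(b-a)^{4/5}\|v_{m,\eps}\|_{\xX^1(\iI_{k+1})}$. The stochastic term splits into two pieces: the first is handled directly by Proposition~\ref{pr:pre_stochastic} with integrand $\d_x v_{m,\eps}$, while the second is controlled by the same proposition with $\Phi$ replaced by $\d_x\!\circ\!\Phi$, whose $\gamma$-radonifying norms into $L_x^\infty$ and $L_x^{5/2}$ are finite by Assumption~\ref{as:Phi}. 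Together they contribute a factor of order $(b-a)^{3/10}\|v_{m,\eps}\|_{L_\omega^{\rho_0}\xX^1(\iI_{k+1})}$ after taking $L_\omega^{\rho_0}$ norms.

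\textbf{Iteration and main obstacle.} For $\delta$ small the $\delta^{4-\eps}$ contribution can be absorbed into the left-hand side pathwise; then for $b-a<h$ small enough the correction piece is absorbed pathwise and the stochastic piece is absorbed after taking $L_\omega^{\rho_0}$ norms. Summing across the $O_m(1)$ sub-intervals yields~\eqref{eq:persistence_short}. The global-in-time bound~\eqref{eq:persistence_medium} then follows by partitioning $[0,R]$ deterministically into at most $1+T_0/h$ intervals of length at most $h$ and iterating~\eqref{eq:persistence_short}, collecting one factor of $C$ per iteration and an initial factor $\|v_0\|_{L_\omega^{\rho_0}H^1_x}$. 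The main technical obstacle is the extra stochastic term $v_{m,\eps}(\d_x\Phi e_k)\,dB_k$ arising when the $x$-derivative falls on the noise: its control requires $\d_x\!\circ\!\Phi$ to be $\gamma$-radonifying into $L^\infty$ and $L^{5/2}$, which is precisely what the weighted higher-derivative definition of $\hH$ in Assumption~\ref{as:Phi} (with $M\ge 1$) is designed to provide; without this input the Duhamel equation for $\d_x v_{m,\eps}$ would not close.
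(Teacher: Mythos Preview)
Your proposal is correct and follows essentially the same approach as the paper: differentiate the Duhamel formula, use the random dissection driven by $\|v_{m,\eps}\|_{\xX_2}$ with the truncation cutoff, bound the nonlinearity via $|\d_x \nN^{\eps}(v)|\lesssim |v|^{4-\eps}|\d_x v|$, handle the stochastic integral by splitting $\d_x(v\,\Phi e_k)$ and invoking Assumption~\ref{as:Phi} for $\d_x\!\circ\!\Phi$, absorb the small terms first pathwise and then in $L_\omega^{\rho_0}$, and finally iterate deterministically over $[0,R]$. The only cosmetic difference is that the paper packages the stochastic control into a single maximal quantity $M_{a,b}^{**}$ rather than treating the two derivative pieces separately, but the substance is the same.
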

\begin{proof}
	The proof is essentially the same as that for Lemma~\ref{le:stability_short}. Let
	\begin{equation*}
	\begin{split}
	M_{1}^{**}(t) &= \sup_{a \leq r_1 \leq r_2 \leq t} \Big\| \int_{r_1}^{r_2} \sS(t-s) v_{m,\eps}(s) {\rm d} W_s \Big\|_{H_{x}^{1}}, \\
	M_{2}^{**}(t) &= \sup_{a \leq r_1 \leq r_2 \leq t} \Big\| \int_{r_1}^{r_2} \sS(t-s) v_{m,\eps}(s) {\rm d} W_s \Big\|_{W_{x}^{1,10}}, 
	\end{split}
	\end{equation*}
	where $\|f\|_{W_{x}^{1,10}} = \|f\|_{L_{x}^{10}} + \|\d_x f\|_{L_{x}^{10}}$. Let
	\begin{equation*}
	M_{a,b}^{**} := \|M_{1}^{**}\|_{L_{t}^{\infty}(a,b)} + \|M_{2}^{**}\|_{L_{t}^{5}(a,b)}. 
	\end{equation*}
	Since the differentiation in $x$ variable commutes with the operator $\sS(t-s)$, in addition to the $L_x^2$ and $L_{x}^{10}$ norms of the above quantities, we also need to control
	\begin{equation*}
	\int_{r_1}^{r_2} \sS(t-s) \d_x v_{m,\eps}(s) {\rm d} W_s \quad \text{and} \quad \int_{r_1}^{r_2} \sS(t-s) v_{m,\eps}(s) {\rm d} \d_x W_s.  
	\end{equation*}
	The only difference to Proposition~\ref{pr:pre_stochastic} is that in the latter situation, the noise $W$ is replaced by $\d_x W$. This amounts to replace $\|\Phi\|$ in that proposition by $\|\d_x \Phi\|$ with the same norm, which is also finite by Assumption~\ref{as:Phi} and Lemma~\ref{le:factorisation}. Hence, it follows from Proposition~\ref{pr:pre_stochastic} that
	\begin{equation} \label{eq:persistence_stochastic}
	\|M_{a,b}^{**}\|_{L_{\omega}^{\rho_0}} \leq C (b-a)^{\frac{3}{10}} \|v^\eps\|_{L_{\omega}^{\rho_0} \xX^{1}(a,b)}. 
	\end{equation}
	Now, similar to the argument in Lemma~\ref{le:stability_short}, we choose a random dissection $\{\tau_k\}$ of the interval $[a,b]$ as follows. Let $\tau_0=a$, and define
	\begin{equation*}
	\tau_{k+1} = b \wedge \inf \big\{r > \tau_k: \1_{\{\|v_{m,\eps}\|_{\xX_2(0,\tau_k)} \leq 2m\}} \|v_{m,\eps}\|_{\xX_2(\tau_k,r)} \geq \delta \big\}. 
	\end{equation*}
	We then have a dissection
	\begin{equation*}
	a = \tau_0 < \tau_1 < \cdots < \tau_K = b, 
	\end{equation*}
	and the total number of points $K$ is at most $(4m \delta^{-1})^{5}$. Let $\iI_{k+1} = [\tau_k,\tau_{k+1}]$. Again, since $\d_x$ commutes with $\sS(t-s)$ and
	\begin{equation*}
	|\d_x \nN^{\eps}(v_{m,\eps})| \leq C |\d_x v_{m,\eps}| \cdot |v_{m,\eps}|^{4-\eps}, 
	\end{equation*}
	the $\xX^{1}(\iI_{k+1})$ norm of the nonlinearity can be controlled by $\delta^{4-\eps} \|v_{m,\eps}\|_{\xX_{1}(\iI_{k+1})}$. We then have the pathwise bound
	\begin{equation*}
	\begin{split}
	\|v_{m,\eps}\|_{\xX^{1}(\iI_{k+1})} \leq C &\Big( \|v_{m,\eps}\|_{\xX^{1}(0,\tau_k)} + \delta^{4-\eps} \|v_{m,\eps}\|_{\xX^{1}(\iI_{k+1})}\\
	&+ (b-a)^{\frac{4}{5}} \|v_{m,\eps}\|_{\xX^{1}(\iI_{k+1})} \Big) + M_{a,b}^{**}.
	\end{split} 
	\end{equation*}
	If both $h$ and $\delta$ are sufficiently small, we can absorb the term $\|v_{m,\eps}\|_{\xX^{1}(\iI_{k+1})}$ into the left hand side. Since the number of the $\tau_k$'s is at most $(4 m \delta)^{-1}$, we can iterate the above bound over the intervals $\{\iI_{k}\}$ and then adding $\|v_{m,\eps}\|_{\xX^{1}(a,b)}$ to both sides so that we obtain the bound
	\begin{equation*}
	\|v_{m,\eps}\|_{\xX^{1}(0,b)} \leq C \Big( \|v_{m,\eps}\|_{\xX^{1}(0,a)} + M_{a,b}^{**} \Big). 
	\end{equation*}
	Now, we take $L_{\omega}^{\rho_0}$-norm on both sides. By \eqref{eq:persistence_stochastic}, if $h$ is sufficiently small, we can absorb the term $\|M_{a,b}^{**}\|_{L_{\omega}^{\rho_0}}$ to the left hand side to obtain \eqref{eq:persistence_short}. 
	
	As for the second part of the claim, we iterate the bound \eqref{eq:persistence_short} in the interval $[0,R]$ for at most $1 + \frac{R}{h}$ steps. The bound \eqref{eq:persistence_medium} then follows. 
\end{proof}

\begin{proof} [Proof of Proposition~\ref{pr:lwpinhigh}]
	From local existence of $H^1$ solutions, we know there exists a unique $v_{m,\eps}$ that solves \eqref{eq:trun_sub} in $L_{\omega}^{\rho_0} \xX^{1}(0,R_0)$ with initial data $v_0$, where $R_0>0$ depends on $\|v_0\|_{L_{\omega}^{\rho_0}H_{x}^{1}}$ only. If $R_0 \geq T_0$, then Lemma~\ref{le:persistence_short} immediately implies the desired bound. If $R_0 < T_0$, then by the same lemma, we have
	\begin{equation*}
	\|v_{m,\eps}(R_0/2)\|_{L_{\omega}^{\rho_0}H_{x}^{1}} \leq C^{*} \|v_0\|_{L_{\omega}^{\rho_0} H_{x}^{1}}, 
	\end{equation*}
	where $C^{*}$ depends on $m$ only (we omit its dependence on $T_0$ since it is a fixed number). Starting from $\frac{R_0}{2}$, we can extend the solution to a period of time which depends on $C^{*} \|v_0\|_{L_{\omega}^{\rho_0}H_{x}^{1}}$ only. Lemma~\ref{le:persistence_short} ensures that we can repeat this procedure with the same extension time at every step. The procedure necessarily ends in finitely many steps when we reach $T_0$. This shows that the solution $v_{m,\eps}$ can be extended to $T_0$ and it satisfies the desired bound. 
\end{proof}

\subsection{Convergence with regular initial data -- proof of Proposition~\ref{pr:convergence_high}}

Let $v_{m,\eps_1}$ and $v_{m,\eps_2}$ denote the solutions to \eqref{eq:trun_sub} with common initial data $v_0 \in L_{\omega}^{\infty} H_{x}^{1}$ and nonlinearities $\nN^{\eps_1}$ and $\nN^{\eps_2}$ respectively. 

\begin{lem} \label{le:Cauchy_high_local}
	There exists $h>0$ such that for every $\eta>0$ and every $v_0 \in L_{\omega}^{\infty}H_{x}^{1}$ with $\|v_0\|_{L_{\omega}^{\infty}L_{x}^{2}} \leq \delta_0$, there exist $\eps^{*}>0$ and $\kappa>0$ such that for every $\eps_1, \eps_2 < \eps^{*}$ and every $[a,b] \subset [0,T_0]$ with $b-a < h$, if
	\begin{equation*}
	\|v_{m,\eps_1} - v_{m,\eps_2}\|_{L_{\omega}^{\rho}(0,a)} < \kappa, 
	\end{equation*}
	then we have the bound
	\begin{equation*}
	\|v_{m,\eps_1} - v_{m,\eps_2}\|_{L_{\omega}^{\rho}(0,b)} < \eta. 
	\end{equation*}
\end{lem}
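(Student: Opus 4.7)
The plan is to follow the same bootstrap/random-dissection strategy as in Lemma~\ref{le:stability_short}, but with an extra term accounting for the difference of the two nonlinearities. Writing $D = v_{m,\eps_1} - v_{m,\eps_2}$, on each subinterval $[\tau_k,\tau_{k+1}]$ of a random dissection of $[a,b]$ chosen as in \eqref{eq:stability_dissection} (replacing $\uem, \vem$ by $v_{m,\eps_1}, v_{m,\eps_2}$), we have the Duhamel identity
\begin{equation*}
D(t) = e^{i(t-\tau_k)\Delta} D(\tau_k) + \dD_{\tau_k}^{\gG}(t) + \dD_{\tau_k}^{\qQ}(t) - \tfrac{1}{2}\int_{\tau_k}^{t}\sS(t-s)\bigl(F_{\Phi} D(s)\bigr)\,{\rm d}s,
\end{equation*}
where the linear, stochastic, and $F_{\Phi}$ correction terms are handled exactly as in Lemma~\ref{le:stability_short}, giving constants independent of $\eps_1,\eps_2$. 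The only new ingredient is the nonlinearity, which we split as
\begin{equation*}
\theta_{m}(\|v_{m,\eps_1}\|) \nN^{\eps_1}(v_{m,\eps_1}) - \theta_{m}(\|v_{m,\eps_2}\|) \nN^{\eps_2}(v_{m,\eps_2}) = \mathcal{A} + \mathcal{B},
\end{equation*}
where $\mathcal{A}$ lumps together all terms involving the \emph{same} exponent $\nN^{\eps_1}$ but different functions (or same function but different truncation factors), and $\mathcal{B} = \theta_{m}(\|v_{m,\eps_2}\|)\bigl(\nN^{\eps_1}(v_{m,\eps_2}) - \nN^{\eps_2}(v_{m,\eps_2})\bigr)$ is the genuinely new piece.

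The contribution from $\mathcal{A}$ is treated verbatim as in Lemma~\ref{le:stability_short} (all three ``situations'' there only used the fact that both truncated nonlinearities share the same exponent $4-\eps$ up to $O(\eps)$ corrections that are bounded by pointwise powers of $|v|$), yielding a bound of the form $C(\|D\|_{\xX(0,\tau_k)} + \delta^{4-\max(\eps_1,\eps_2)}\|D\|_{\xX(\iI_{k+1})})$. For the new piece $\mathcal{B}$, using
\begin{equation*}
\bigl||v|^{4-\eps_1} - |v|^{4-\eps_2}\bigr| \leq |\eps_1 - \eps_2|\cdot\bigl(1 + |v|^{4}\bigr)\bigl(1 + |\log|v||\bigr),
\end{equation*}
we see that pointwise control of $|v_{m,\eps_2}|$ gives smallness in $|\eps_1-\eps_2|$. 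This is where we invoke the $H^1$ persistence from Proposition~\ref{pr:lwpinhigh}: since $v_0 \in L_{\omega}^{\infty}H_x^{1}$, for every $\rho' \in [1,\infty)$ we have $\|v_{m,\eps_j}\|_{L_{\omega}^{\rho'}\xX^{1}(0,T_0)} \lesssim_{m,\rho'} \|v_0\|_{L_{\omega}^{\rho'}H_{x}^{1}}$, and by one-dimensional Sobolev embedding this controls $\|v_{m,\eps_j}\|_{L_t^{\infty}L_x^{\infty}}$ in every $L_{\omega}^{\rho'}$.

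To convert this into the $L_{\omega}^{\rho}$ bound on $\mathcal{B}$, I would use a good set/bad set decomposition. Fix $R>0$ large and let $\Omega_R$ be the event that $\|v_{m,\eps_j}\|_{L_t^{\infty}L_x^{\infty}(0,T_0)} \leq R$ for both $j=1,2$. On $\Omega_R$, Strichartz applied to $\mathcal{B}$ together with the pointwise bound above produces a factor $|\eps_1-\eps_2|\cdot C(R,T_0)$, which is arbitrarily small once $\eps^*$ is small. On the complement $\Omega_R^c$, Chebyshev combined with the $L_{\omega}^{\rho'}$-bound for $\rho' \gg \rho$ gives $\PP(\Omega_R^c) \leq C R^{-\rho'}$, and then H\"{o}lder in $\omega$ against the crude $L_{\omega}^{2\rho}$ Strichartz control of $\mathcal{B}$ (which is uniform in $\eps$ by Proposition~\ref{pr:uniform_e_m} applied with higher exponent) makes this contribution arbitrarily small by choosing $R$ large. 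Combining the pieces, a standard bootstrap for $h$ small enough allows us to absorb the $\|D\|_{L_{\omega}^{\rho}\xX(\iI_{k+1})}$ term from $\mathcal{A}$, the stochastic term and the $F_{\Phi}$ term into the left side, leaving
\begin{equation*}
\|D\|_{L_{\omega}^{\rho}\xX(0,b)} \leq C_{m}\bigl(\|D\|_{L_{\omega}^{\rho}\xX(0,a)} + \eta/2\bigr).
\end{equation*}
Choosing $\kappa = \eta/(2C_m)$ and $\eps^{*}$ correspondingly small finishes the proof.

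The principal obstacle is controlling $\mathcal{B}$ in $L_{\omega}^{\rho}$: pointwise in $\omega$ one has no uniform bound on $\|v_{m,\eps_j}\|_{L^{\infty}}$, so one must play off the arbitrarily large moments given by $v_0 \in L_{\omega}^{\infty}H_x^{1}$ (via Proposition~\ref{pr:lwpinhigh}) against the logarithmic blow-up of $|v|^{4-\eps_1} - |v|^{4-\eps_2}$ through the good/bad set splitting described above. The other estimates are essentially identical to those in Lemma~\ref{le:stability_short}.
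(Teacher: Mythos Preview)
Your approach is essentially identical to the paper's: random dissection as in Lemma~\ref{le:stability_short}, splitting of the nonlinearity into a same-exponent part $\mathcal{A}$ (handled verbatim as before) and a different-exponent part $\mathcal{B}$, with $\mathcal{B}$ controlled via a good set/bad set decomposition using the $H_x^1$ persistence from Proposition~\ref{pr:lwpinhigh} together with the uniform bounds from Proposition~\ref{pr:uniform_e_m}. The only clarification worth making is that the paper applies the good/bad split to the \emph{entire} difference $D$ rather than just to $\mathcal{B}$: on $\Omega_\Lambda$ one runs the full pathwise iteration (where $\mathcal{B}$ contributes a deterministic $C\Lambda^4\eps^*$), while on $\Omega_\Lambda^c$ one bounds $\|D\,\1_{\Omega_\Lambda^c}\|_{L_\omega^{\rho_0}\xX(0,b)}$ directly by H\"older without any iteration --- this cleanly separates the pathwise bootstrap from the moment estimate and avoids the awkwardness of mixing an $L_\omega^\rho$-only bound for $\mathcal{B}$ into a pathwise iteration over random subintervals.
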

\begin{proof}
	The proof is essentially the same as that for Lemma~\ref{le:stability_short}, but with one more ingredient. For every $\omega \in \Omega$, let
	\begin{equation*}
	a = \tau_0 < \tau_1 < \cdots < \tau_K = b
	\end{equation*}
	be a dissection of the interval $[a,b]$ chosen according to \eqref{eq:stability_dissection}, except that one replaces $u_{m,\eps}$ and $v_{m,\eps}$ by $v_{m,\eps_1}$ and $v_{m,\eps_2}$ in this case. With the same replacement, the bounds \eqref{eq:stability_initial}, \eqref{eq:stability_correction} and \eqref{eq:stability_stochastic} also carry straightforwardly, so we have
	\begin{equation*}
	\begin{split}
	\|v_{m,\eps_1} - v_{m,\eps_2}\|_{\xX(\iI_{k+1})} &\leq C \Big( \|v_{m,\eps_1} - v_{m,\eps_2}\|_{\xX(0,\tau_k)} + (b-a)^{\frac{4}{5}} \|v_{m,\eps_1} - v_{m,\eps_2}\|_{\xX(\iI_{k+1})} \Big)\\
	&\phantom{11}+ M_{a,b}^{*} + \|\dD_{\tau_k}^{\gG}\|_{\xX(\iI_{k+1})}, 
	\end{split}
	\end{equation*}
	where $\iI_{k+1} = [\tau_k, \tau_{k+1}]$, 
	\begin{equation*}
	\begin{split}
	\dD_{\tau_k}^{\gG}(t) &= - i \int_{\tau_k}^{t} \sS(t-s) \Big( \theta_{m}\big(\|v_{m,\eps_1}\|_{\xX_{2}(0,s)}\big) \nN^{\eps_1} \big(v_{m,\eps_1}(s)\big)\\
	&\phantom{11}- \theta_{m}\big(\|v_{m,\eps_2}\|_{\xX_{2}(0,s)}\big) \nN^{\eps_2} \big(v_{m,\eps_2}(s)\big) \Big) {\rm d}s, 
	\end{split}
	\end{equation*}
	and $M_{a,b}^{*} \geq 0$ satisfies the moment bound
	\begin{equation} \label{eq:cauchy_stochastic}
	\|M_{a,b}^{*}\|_{L_{\omega}^{\rho_0}} \leq C (b-a)^{\frac{3}{10}} \|v_{m,\eps_1} - v_{m,\eps_2}\|_{L_{\omega}^{\rho_0} \xX(a,b)}. 
	\end{equation}
	Again, if $h$ is sufficiently small, we can absorb the term $\|v_{m,\eps_1} - v_{m,\eps_2}\|_{\xX(\iI_{k+1})}$ into the left hand side so that
	\begin{equation} \label{eq:cauchy_deter_1}
	\|v_{m,\eps_1} - v_{m,\eps_2}\|_{\xX(\iI_{k+1})} \leq C \Big( \|v_{m,\eps_1} - v_{m,\eps_2}\|_{\xX(0,\tau_k)} + M_{a,b}^{*} + \|\dD_{\tau_k}^{\gG}\|_{\xX(\iI_{k+1})} \Big). 
	\end{equation}
	So far it has been exactly the same as in Lemma~\ref{le:stability_short}, and the bound \eqref{eq:cauchy_deter_1} holds pathwise with a deterministic constant $C$. Now the difference comes in the bound for $\dD_{\tau_k}^{\gG}$, as the two nonlinearities have different powers, so we need information of the $L_{x}^{\infty}$ norms of $v^{m,\eps_j}$ to control their difference. 
	
	We now start to control $\dD_{\tau_k}^{\gG}$. For every $\Lambda > 1$, let
	\begin{equation*}
	\Omega_{\Lambda} = \big\{ \omega \in \Omega: \|v_{m,\eps_1}\|_{L_{t}^{\infty}H_{x}^{1}(0,T_0)} \leq \Lambda,\; \|v_{m,\eps_2}\|_{L_{t}^{\infty}H_{x}^{1}(0,T_0)} \leq \Lambda \big\}. 
	\end{equation*}
	By Proposition~\ref{pr:lwpinhigh}, there exists $C > 0$ such that
	\begin{equation} \label{eq:cauchy_prob}
	\Pr (\Omega_{\Lambda}^{c}) \leq \frac{C}{\Lambda^{\rho_0}}
	\end{equation}
	for every $\Lambda > 1$. On $\Omega_{\Lambda}$, we still bound the nonlinearity pathwise. For every $\omega \in \Omega_{\Lambda}$, we write
	\begin{equation*}
	\begin{split}
	&\phantom{111}\theta_{m}\big(\|v_{m,\eps_1}\|\big) \nN^{\eps_1}\big(v_{m,\eps_1}\big) - \theta_{m}\big(\|v_{m,\eps_2}\|\big) \nN^{\eps_2}\big(v_{m,\eps_2}\big)\\
	&= \Big( \theta_{m}\big(\|v_{m,\eps_1}\|\big) \nN^{\eps_1}\big(v_{m,\eps_1}\big) - \theta_{m}\big(\|v_{m,\eps_2}\|\big) \nN^{\eps_1}\big(v_{m,\eps_2}\big) \Big)\\
	&+ \Big( \theta_{m}\big(\|v_{m,\eps_2}\|\big) \nN^{\eps_1}\big(v_{m,\eps_2}\big) - \theta_{m}\big(\|v_{m,\eps_2}\|\big) \nN^{\eps_2}\big(v_{m,\eps_2}\big) \Big),
	\end{split} 
	\end{equation*}
	and we write $\dD_{\tau_k}^{\gG} = \dD_{\tau_k,1}^{\gG} + \dD_{\tau_k,2}^{\gG}$, corresponding to the two terms in the above decomposition of the nonlinearity respectively. For $\dD_{\tau_k,1}^{\gG}$, since the two nonlinearities are the same (both with $\eps_1$), we have exactly the same bound as in \eqref{eq:stability_nonlinear}. Hence, if $\delta$ is small, we have
	\begin{equation} \label{eq:cauchy_N_1}
	\|\dD_{\tau_k,1}^{\gG}\|_{\xX(\iI_{k+1})} \leq C \|v_{m,\eps_1} - v_{m,\eps_2}\|_{\xX(0,\tau_k)}. 
	\end{equation}
	For $\dD_{\tau_k,2}^{\gG}$, we note that the two nonlinearities have different powers but the same input $v_{m,\eps_2}$. Since $L_{t}^{1} L_{x}^{2}$ is also the dual of a Strichartz pair, applying \eqref{eq:Strichartz_2}, we have
	\begin{equation*}
	\begin{split}
	\|\dD_{\tau_k,2}^{\gG}\|_{\xX(\iI_{k+1})} &\leq C \big\| |v_{m,\eps_2}| \cdot \big( |v_{m,\eps_2}|^{4-\eps_1} - |v_{m,\eps_2}|^{4-\eps_2} \big) \big\|_{L_{t}^{1}L_{x}^{2}(\iI_{k+1})}\\
	&\leq C \|v_{m,\eps_2}\|_{\xX_{1}(\iI_{k+1})} \big\| \, |v_{m,\eps_2}|^{4-\eps_1} - |v_{m,\eps_2}|^{4-\eps_2} \big\|_{L_{t}^{\infty} L_{x}^{\infty}(\iI_{k+1})}. 
	\end{split}
	\end{equation*}
	Since we are in dimension one, we have
	\begin{equation*}
	\|v_{m,\eps_j}\|_{L_{x}^{\infty}}^{2} \leq 2 \|v_{m,\eps_j}\|_{L_{x}^{2}} \|\d_x v_{m,\eps_j}\|_{L_{x}^{2}} \leq 2 \Lambda^{2}, 
	\end{equation*}
	where the first inequality follows from Newton-Leibniz for the function $(v_{m,\eps_j})^{2}$, and in the second inequality we have used the assumption that $\omega \in \Omega_{\Lambda}$. Plugging this back to the bound for $\dD_{\tau_k,2}^{\gG}$ above, we get
	\begin{equation} \label{eq:cauchy_N_2}
	\|\dD_{\tau_k,2}^{\gG}\|_{\xX(\iI_{k+1})} \leq C \Lambda^{4} |\eps_1 - \eps_2| \leq C \Lambda^{4} \eps^{*}, 
	\end{equation}
	where we have replaced $\|v_{m,\eps_2}\|_{\xX_{1}(\iI_{k+1})}$ by a constant due to the conservation law. Plugging \eqref{eq:cauchy_N_1} and \eqref{eq:cauchy_N_2} back into \eqref{eq:cauchy_deter_1}, we get
	\begin{equation*}
	\|v_{m,\eps_1} - v_{m,\eps_2}\|_{\xX(\iI_{k+1})} \leq C \Big( \|v_{m,\eps_1} - v_{m,\eps_2}\|_{\xX(0,\tau_k)} + \Lambda^{4} \eps^{*} + M_{a,b}^{*} \Big). 
	\end{equation*}
	Again, iterating this bound over the intervals $\iI_1, \dots, \iI_K$ and adding $\|v_{m,\eps_1} - v_{m,\eps_2}\|_{\xX(0,a)}$ to both sides, we get
	\begin{equation*}
	\|v_{m,\eps_1} - v_{m,\eps_2}\|_{\xX(0,b)} \leq C_{m} \Big( \|v_{m,\eps_1} - v_{m,\eps_2}\|_{\xX(0,a)} + \Lambda^{4} \eps^{*} + M_{a,b}^{*} \Big) \quad \text{on} \phantom{1} \Omega_{\Lambda}. 
	\end{equation*}
	Taking $L_{\omega}^{\rho_0}$-norm restricted to $\Omega_{\Lambda}$ and using the bound \eqref{eq:cauchy_stochastic}, we get
	\begin{equation} \label{eq:cauchy_average_1}
	\begin{split}
	&\phantom{111}\|(v_{m,\eps_1} - v_{m,\eps_2}) \1_{\Omega_{\Lambda}}\|_{L_{\omega}^{\rho_0} \xX(0,b)}\\
	&\leq C \Big( \|v_{m,\eps_1}-v_{m,\eps_2}\|_{L_{\omega}^{\rho_0} \xX(0,a)} + \Lambda^{4} \eps^{*} + h^{\frac{3}{10}} \|v_{m,\eps_1} - v_{m,\eps_2}\|_{L_{\omega}^{\rho_0} \xX(a,b)} \Big), 
	\end{split}
	\end{equation}
	where the constant $C$ is deterministic and depends on $m$ only. On $\Omega_{\Lambda}^{c}$, we have
	\begin{equation} \label{eq:cauchy_average_2}
	\|(v_{m,\eps_1}-v_{m,\eps_2}) \1_{\Omega_{\Lambda}^{c}}\|_{L_{\omega}^{\rho_0} \xX(0,b)} \leq \|v_{m,\eps_1}-v_{m,\eps_2}\|_{L_{\omega}^{2\rho_0}\xX(0,b)} \big( \Pr(\Omega_{\Lambda}^{c}) \big)^{\frac{1}{2\rho}} \leq \frac{C}{\sqrt{\Lambda}}, 
	\end{equation}
	where we have used Proposition~\ref{pr:uniform_e_m} with $\rho = 2 \rho_0$ and \eqref{eq:cauchy_prob} to control the two terms after using H\"{o}lder. Adding \eqref{eq:cauchy_average_1} and \eqref{eq:cauchy_average_2} together, if $h$ is sufficiently small, we can again absorb the term $\|v_{m,\eps_1} - v_{m,\eps_2}\|_{L_{\omega}^{\rho_0} \xX(a,b)}$ to the left hand side. This gives us the bound
	\begin{equation} \label{eq:cauchy_local}
	\|v_{m,\eps_1} - v_{m,\eps_2}\|_{L_{\omega}^{\rho_0}\xX(0,b)} \leq C_{m} \Big( \|v_{m,\eps_1} - v_{m,\eps_2}\|_{L_{\omega}^{\rho_0}\xX(0,a)} + \Lambda^{4} \eps^{*} + \Lambda^{-\frac{1}{2}} \Big). 
	\end{equation}
	Now, for every $\eta>0$, we choose $\Lambda$ large enough so that $C_{m} \Lambda^{-\frac{1}{2}} < \frac{\eta}{3}$, then $\eps^{*}$ small so that $C_{m} \Lambda^{4} \eps^{*} < \frac{\eta}{3}$, and finally $\kappa$ small so that the first term on the right hand side is also bounded by $\frac{\eta}{3}$. This completes the proof of the lemma. 
\end{proof}

\begin{proof} [Proof of Proposition~\ref{pr:convergence_high}]
	It suffices to show that for every $\eta>0$, there exists $\eps^{*}>0$ such that
	\begin{equation} \label{eq:high_aim}
	\|v_{m,\eps_1} - v_{m,\eps_2}\|_{L_{\omega}^{\rho_0}\xX(0,T_0)} < \eta
	\end{equation}
	whenever $\eps_1, \eps_2 < \eps^{*}$. The proof is a backward bootstrap argument using Lemma \ref{le:Cauchy_high_local}. For every $\eta>0$, there exists $\eps^{(1)}$ and $\kappa^{(1)}$ such that if $\eps_1, \eps_2 < \eps^{(1)}$, then $\|v_{m,\eps_1} - v_{m,\eps_2}\|_{L_{\omega}^{\rho_0} \xX(0,T-h)} < \kappa^{(1)}$ implies $\|v_{m,\eps_1} - v_{m,\eps_2}\|_{L_{\omega}^{\rho_0}\xX(0,T_0)} < \eta$. Suppose there exists $\kappa^{(n)}>0$ and $\eps^{(n)}$ such that \eqref{eq:high_aim} holds whenever $\eps_1, \eps_{1}" < \eps^{(n)}$ and 
	\begin{equation} \label{eq:high_n}
	\|v_{m,\eps_1}-v_{m,\eps_2}\|_{L_{\omega}^{\rho_0}\xX(0,T-nh)} < \kappa^{(n)}. 
	\end{equation}
	By Lemma~\ref{le:Cauchy_high_local}, we can find $\eps^{(n+1)}$ and $\kappa^{(n+1)}$ such that $\eps_1, \eps_2 < \eps^{(n+1)}$ and $\|v_{m,\eps_1}-v_{m,\eps_2}\|_{L_{\omega}^{\rho_0}\xX(0,T-(n+1)h)} < \kappa^{(n+1)}$ implies \eqref{eq:high_n}, which then further implies \eqref{eq:high_aim}. This procedure necessarily ends in finitely many steps depending on $h$ and $T_0$ only. We then take
	\begin{equation*}
	\eps^{*} = \min_{j} \{\eps^{(j)}\}. 
	\end{equation*}
	The proof is complete by noting that $v_{m,\eps_1}$ and $v_{m,\eps_2}$ have the same initial data so that they start with zero difference. 
\end{proof}

\subsection{Proof of Proposition~\ref{pr:converge_e}}

We are now ready to prove the convergence in $\eps$. Let $u_{m,\eps_1}$ and $u_{m,\eps_2}$ denote the solutions to \eqref{eq:trun_sub} with common initial data $u_0$ and nonlinearities $\nN^{\eps_1}$ and $\nN^{\eps_2}$ respectively. We first need the following lemma to perturb the initial data to $H_{x}^{1}$. 

\begin{lem} \label{le:initial_smoothen}
	For every $u_0 \in L_{\omega}^{\infty}L_{x}^{2}$ and every $\kappa > 0$, there exists $v_0 \in L_{\omega}^{\infty}H_{x}^{1}$ such that $\|v_0\|_{L_x^2} \leq \|u_0\|_{L_x^2}$ almost surely, and
	\begin{equation*}
	\|u_0 - v_0\|_{L_{\omega}^{\rho_0}L_{x}^{2}} < \kappa_0. 
	\end{equation*}
\end{lem}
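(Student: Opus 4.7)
The plan is to produce $v_0$ by a deterministic regularising operator applied pathwise to $u_0$, with the smoothing parameter tuned in terms of $\kappa_0$ at the end. The two natural candidates are either a heat-semigroup mollification $v_0^{(s)} := e^{s\Delta} u_0$ or a Fourier truncation $v_0^{(N)} := P_{\leq N} u_0$. Either will do; I will present the Fourier truncation option since the $L^2 \to H^1$ gain is the cleanest.

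First I would define $v_0 := P_{\leq N} u_0$ for a smooth Littlewood--Paley projector onto frequencies $|\xi| \leq N$, with $N$ to be chosen large. Since the projector is realised pointwise in $\omega$ as a bounded Fourier multiplier, by Plancherel one has $\|v_0\|_{L_x^2} \leq \|u_0\|_{L_x^2}$ almost surely, which gives the first conclusion, and also $\|\partial_x v_0\|_{L_x^2} \leq N \|u_0\|_{L_x^2}$ almost surely, so that $\|v_0\|_{L_\omega^\infty H_x^1} \leq (1+N) \|u_0\|_{L_\omega^\infty L_x^2} < \infty$, placing $v_0$ in $L_\omega^\infty H_x^1$ as required.

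For the approximation estimate, I would observe that for each fixed $\omega$, $u_0(\omega) \in L^2(\RR)$, so the deterministic statement $\|u_0(\omega) - P_{\leq N} u_0(\omega)\|_{L_x^2} \to 0$ as $N \to \infty$ holds pathwise. Moreover this quantity is bounded by $2 \|u_0(\omega)\|_{L_x^2} \leq 2 \|u_0\|_{L_\omega^\infty L_x^2}$, which is an $\omega$-independent (hence $L_\omega^{\rho_0}$-integrable) dominating constant. Dominated convergence then gives
\begin{equation*}
\lim_{N \to \infty} \|u_0 - P_{\leq N} u_0\|_{L_\omega^{\rho_0} L_x^2} = 0,
\end{equation*}
so choosing $N$ large enough makes this quantity smaller than $\kappa_0$.

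There is no real obstacle here; the only thing to double-check is that the pathwise statement $\|u_0(\omega) - P_{\leq N} u_0(\omega)\|_{L_x^2} \to 0$ holds for \emph{almost every} $\omega$ (not merely for deterministic $L^2$ functions). This is automatic since $u_0 \in L_\omega^\infty L_x^2 \subset L_\omega^1 L_x^2$ means $u_0(\omega)$ is an honest element of $L^2(\RR)$ for a.e. $\omega$, and measurability of the map $\omega \mapsto \|u_0(\omega) - P_{\leq N} u_0(\omega)\|_{L_x^2}$ is inherited from the measurability of $u_0$ as an $L^2$-valued random variable.
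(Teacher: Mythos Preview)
Your proof is correct. Both your argument and the paper's follow the same outline---apply a deterministic smoothing operator $\omega$-pointwise, verify the a.s.\ $L_x^2$ contraction and the $L_\omega^\infty H_x^1$ membership, then upgrade pointwise $L_x^2$ convergence to $L_\omega^{\rho_0} L_x^2$ convergence---but they differ in two respects. First, the paper uses spatial mollification $u_0 * \varphi_\delta$ rather than a frequency projection $P_{\leq N} u_0$; either gives the required properties. Second, and more substantively, the paper passes from a.e.\ convergence to $L_\omega^{\rho_0}$ convergence via Egorov's theorem (splitting $\Omega$ into a large set where convergence is uniform and a small exceptional set handled by the $L_\omega^\infty L_x^2$ bound), whereas you invoke dominated convergence directly using the $\omega$-uniform bound $\|u_0 - P_{\leq N} u_0\|_{L_x^2} \leq 2\|u_0\|_{L_\omega^\infty L_x^2}$. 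Your route is shorter; the paper's Egorov argument is not wrong but is more than is needed here, since the same $L_\omega^\infty$ bound would equally serve as a dominating function for the mollifier approach.
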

\begin{proof}
	We fix $u_0 \in L_{\omega}^{\infty}L_{x}^{2}$ and $\kappa > 0$ arbitrary. Let $\varphi$ be a mollifier on $\RR$, and $\varphi_{\delta} = \delta^{-1} \varphi(\cdot / \delta)$. For every $\omega \in \Omega$, let $u_{0,\delta}(\omega) = u_0(\omega) * \varphi_{\delta}$. By Young's inequality, we have
	\begin{equation} \label{eq:initial_convolution}
	\|u_{0,\delta}(\omega)\|_{L_{x}^{2}} \leq \|u_{0}(\omega)\|_{L_{x}^{2}}, \qquad \|u_{0,\delta}(\omega)\|_{\dot{H}_{x}^{1}} \leq \delta^{-1} \|\varphi'\|_{L_{x}^{1}} \|u_{0}(\omega)\|_{L_{x}^{2}}
	\end{equation}
	for almost every $\omega$ and every $\delta>0$. In addition, 
	\begin{equation*}
	\|u_{0,\delta}(\omega) - u_{0}(\omega)\|_{L_{x}^{2}} \rightarrow 0
	\end{equation*}
	as $\delta \rightarrow 0$ for almost every $\omega$. By Egorov's theorem, there exists $\Omega' \subset \Omega$ with $\Pr(\Omega') < \kappa / 4 \|u_0\|_{L_{\omega}^{\infty}L_{x}^{2}}$ such that on $\Omega \setminus \Omega'$, we have
	\begin{equation*}
	\sup_{\omega \in \Omega \setminus \Omega'} \|u_{0,\delta}(\omega) - u_{0}(\omega)\|_{L_{x}^{2}} \rightarrow 0
	\end{equation*}
	as $\delta \rightarrow 0$. Thus, we can choose $\delta = \delta^{*}$ small enough so that
	\begin{equation*}
	\sup_{\omega \in \Omega \setminus \Omega'} \|u_{0,\delta^{*}}(\omega) - u_{0}(\omega)\|_{L_{x}^{2}} < \frac{\kappa}{2}. 
	\end{equation*}
	Let $v_{0}(\omega) = u_{0,\delta^{*}}(\omega)$. It then follows immediately from \eqref{eq:initial_convolution} that $v_0 \in L_{\omega}^{\infty} H_{x}^{1}$ and $\|v_0\|_{L_{x}^{2}} \leq \|u_0\|_{L_{x}^{2}}$ almost surely. As for $\|u_0-v_0\|_{L_{\omega}^{\rho_0}L_{x}^{2}}$, we have
	\begin{equation*}
	\begin{split}
	\|u_0 - v_0\|_{L_{\omega}^{\rho_0} L_{x}^{2}} &\leq \|(u_0-v_0) \1_{\Omega \setminus \Omega'}\|_{L_{\omega}^{\rho_0}L_{x}^{2}} + \|(u_0-v_0) \1_{\Omega'}\|_{L_{\omega}^{\rho_0}L_{x}^{2}}\\
	&\leq \frac{\kappa}{2} + \big( \|u_0\|_{L_{\omega}^{\infty}L_{x}^{2}} + \|v_0\|_{L_{\omega}^{\infty}L_{x}^{2}} \big) \Pr(\Omega')\\
	&\leq \kappa. 
	\end{split}
	\end{equation*}
	This completes the proof. 
\end{proof}

\begin{proof} [Proof of Proposition~\ref{pr:converge_e}]
	We first show that, for every fixed $m$, the sequence $\{u_{m,\eps}\}_{\eps>0}$ is Cauchy in $L_{\omega}^{\rho_0} \xX(0,T_0)$. Fix an arbitrary $\eta>0$. For every $\kappa>0$, by Lemma~\ref{le:initial_smoothen}, we can choose $v_0 \in L_{\omega}^{\infty}H_{x}^{1}$ such that
	\begin{equation*}
	\|v_0\|_{L_{\omega}^{\infty}L_{x}^{2}} \leq \delta_0, \qquad \|u_0 - v_0\|_{L_{\omega}^{\rho_0}L_{x}^{2}} < \kappa. 
	\end{equation*}
	For every $\eps_1$ and $\eps_2$, we have
	\begin{equation*}
	\begin{split}
	\|u_{m,\eps_1} - u_{m,\eps_2}\|_{L_{\omega}^{\rho_0}\xX(0,T_0)} \leq &\|u_{m,\eps_1} - v_{m,\eps_1}\|_{L_{\omega}^{\rho_0}\xX(0,T_0)} + \|u_{m,\eps_2} - v_{m,\eps_2}\|_{L_{\omega}^{\rho_0}\xX(0,T_0)}\\
	&+ \|v_{m,\eps_1} - v_{m,\eps_2}\|_{L_{\omega}^{\rho_0} \xX(0,T_0)}. 
	\end{split}
	\end{equation*}
	By Proposition~\ref{pr:uniform_stable}, we can let $\kappa$ be sufficiently small so that the first two terms on the right hand side above are both smaller than $\frac{\eta}{3}$, uniformly in $\eps_1, \eps_2 \in (0,1)$. As for the third term, by Proposition~\ref{pr:convergence_high}, we can choose $\eps^{*}$ sufficiently small so that this term is also smaller than $\frac{\eta}{3}$ as long as $\eps_1, \eps_2 < \eps^{*}$. This shows that $u_{m,\eps}$ converges in $L_{\omega}^{\rho} \xX(0,T)$ to a limit, which we denote by $u_{m}$. 
	
	It then remains to show that the limit $u_{m}$ satisfies the equation \eqref{eq:trun_critical}. To see this, it suffices to show that each term on the right hand side of \eqref{eq:duhamel_trun_sub} converges in $L_{\omega}^{\rho_0}\xX(0,T_0)$ to the corresponding term with $u_{m,\eps}$ replaced by $u_{m}$. The term with the initial data are identical for all $\eps \geq 0$. The convergence of
	\begin{equation*}
	\int_{0}^{t} \sS(t-s) u_{m,\eps}(s) {\rm d} W_s \quad \text{and} \quad \int_{0}^{t} \sS(t-s) \big( F_{\Phi} u_{m,\eps}(s) \big) {\rm d}s
	\end{equation*}
	follows immediately from the bounds in Propositions~\ref{pr:pre_stochastic} and~\ref{pr:pre_correction} and that $\|u_{m,\eps}-u_{m}\|_{L_{\omega}^{\rho_0}\xX(0,T_0)} \rightarrow 0$. As for the term with the nonlinearity, one needs to compare $\nN^{\eps}(u_{m,\eps})$ and $\nN(u_{m})$. Hence, we proceed as above to perturb the initial data to $v_0 \in H_x^1$. We then write
	\begin{equation*}
	\begin{split}
	&\phantom{111}\theta_{m}\big(\|u_{m,\eps}\|\big) \nN^{\eps}(u_{m,\eps}) - \theta_{m} \big(\|u_{m}\|\big) \nN(u)\\
	&= \theta_{m}\big(\|u_{m,\eps}\|\big) \nN^{\eps}(u_{m,\eps}) - \theta_{m} \big(\|v_{m,\eps}\|\big) \nN^{\eps}(v_{m,\eps})\\
	&+ \theta_{m} \big(\|v_{m,\eps}\|\big) \nN^{\eps}(v_{m,\eps}) - \theta_{m} \big(\|v_{m}\|\big) \nN(v_{m})\\
	&+ \theta_{m} \big(\|v_{m}\|\big) \nN(v_{m}) - \theta_{m} \big(\|u_{m}\|\big) \nN(u). 
	\end{split}
	\end{equation*}
	By Proposition~\ref{pr:uniform_stable}, we have
	\begin{equation*}
	\sup_{\eps \in [0,1]} \|u_{m,\eps} - v_{m,\eps}\|_{L_{\omega}^{\rho_0} \xX(0,T_0)} \leq \|u_0 - v_0\|_{L_{\omega}^{\rho_0}L_{x}^{2}}. 
	\end{equation*}
	The bound for the first and third term above then follows from this uniform stability and the Strichartz estimate \eqref{eq:Strichartz_2}. As for the second term, we can get the desired bound from Proposition~\ref{pr:lwpinhigh} and the uniform boundedness in Proposition~\ref{pr:uniform_e_m} with $\rho > \rho_0$ in exactly the same way as above. Note that it is only simpler in that we do not need to decompose $[0,T_0]$ into smaller subintervals, as we already have a priori control of the difference of the arguments on the whole interval $[0,T_0]$. We can then conclude that the limit $u_{m}$ solves the truncated critical equation \eqref{eq:trun_critical}. 
\end{proof}

%
%
%

\section{Removing the truncation}
\label{sec:converge_m}

\subsection{Proof of Theorem~\ref{th:main}}

In this section, we prove Theorem~\ref{th:main}. We will show that the sequence of solutions $\{u_{m}\}$ is Cauchy in $L_{\omega}^{\rho_{0}}\xX(0,T_0)$, and that the limit $u$ satisfies the corresponding Duhamel's formula for equation \ref{eq:main_eq}. The removal of the truncation $m$ relies crucially on the uniform bound in Proposition \ref{pr:uniform_e_m}. 

For each $m$ and $\eps$, we let
\begin{equation*}
\begin{split}
\tau_{m} &= T_0 \wedge \inf \big\{ t \leq T_0: \|u_{m}\|_{\xX_{2}(0,t)} \geq m-1 \big\},\\
\tau_{m,\eps} &= T_0 \wedge \inf \big\{ t \leq T_0: \|u_{m,\eps}\|_{\xX_{2}(0,t)} \geq m \big\}. 
\end{split}
\end{equation*}
Note that $\tau_m$ is different from $\tau_{m,\eps}$ by simply setting $\eps=0$. We make $\tau_m$ the stopping time when hitting $m-1$ instead of $m$ to simplify the arguments in Lemma~\ref{le:st} below. 

It has been shown in \cite[Lemma 4.1]{BD} that for every $\eps$ and $m$, $\tau_{m,\eps} \leq \tau_{m+1,\eps}$ almost surely, and 
\begin{equation} \label{eq:st_dd}
u_{m,\eps} = u_{m+1,\eps} \quad \text{in}\; \xX(0,\tau_{m,\eps})
\end{equation}
almost surely. Note that the null set excluded may be different when $\eps$ changes, and it does not exclude the possibility that the set of $\omega \in \Omega$ for which \eqref{eq:st_dd} is true for all $\eps$ has probability $0$! Nevertheless, we have a similar statement for the limit $u_{m}$. 

\begin{lem} \label{le:st}
	For every $m$, we have $u_{m} = u_{m+1}$ in $\xX(0,\tau_{m} \wedge \tau_{m+1})$ almost surely. 
\end{lem}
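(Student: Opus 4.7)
The plan is to reduce the claim to the pathwise identity $u_{m,\eps} = u_{m+1,\eps}$ on $\xX(0,\tau_{m,\eps})$ from \cite[Lemma 4.1]{BD} by sending $\eps \to 0$, after first extracting a subsequence along which the $L_{\omega}^{\rho_0} \xX(0,T_0)$-convergence from Proposition~\ref{pr:converge_e} is upgraded to almost sure convergence in $\xX(0,T_0)$.

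More precisely, I would fix a subsequence $\eps_n \to 0$ and a full-measure set $\Omega_0 \subset \Omega$ on which $u_{m,\eps_n} \to u_m$ and $u_{m+1,\eps_n} \to u_{m+1}$ pathwise in $\xX(0,T_0)$, and on which the identity $u_{m,\eps_n} = u_{m+1,\eps_n}$ on $\xX(0,\tau_{m,\eps_n})$ holds for every $n$ (a countable union of null sets is still null). The key step is then a lower-semicontinuity statement for the stopping times: for $\omega \in \Omega_0$ and any $t < \tau_m(\omega)$, the definition gives $\|u_m\|_{\xX_2(0,t)} < m-1$, and the $\xX_2$-convergence yields $\|u_{m,\eps_n}\|_{\xX_2(0,t)} < m$ for all large $n$, i.e.\ $t < \tau_{m,\eps_n}(\omega)$. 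Hence $\liminf_n \tau_{m,\eps_n} \geq \tau_m$ on $\Omega_0$, and by the same argument $\liminf_n \tau_{m+1,\eps_n} \geq \tau_{m+1}$. This is precisely where the asymmetric choice of thresholds ($m-1$ for $\tau_m$ versus $m$ for $\tau_{m,\eps}$) in the definitions pays off.

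Combining these two ingredients: for any $\omega \in \Omega_0$ and any $t < \tau_m(\omega) \wedge \tau_{m+1}(\omega)$, we have $t < \tau_{m,\eps_n}(\omega)$ for all large enough $n$, so $u_{m,\eps_n} = u_{m+1,\eps_n}$ on $[0,t]$. Passing to the limit in $\xX(0,t)$ along the subsequence yields $u_m = u_{m+1}$ on $[0,t]$. Letting $t \uparrow \tau_m \wedge \tau_{m+1}$ gives the equality on $\xX(0, \tau_m \wedge \tau_{m+1})$ (the endpoint value causes no issue since $\xX_2 = L_t^5 L_x^{10}$ only sees functions up to null sets in time).

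The main obstacle is the semicontinuity of the stopping times: $\tau_{m,\eps_n}$ and $\tau_{m+1,\eps_n}$ need not converge to $\tau_m$ and $\tau_{m+1}$ respectively, and they need not even coincide with each other, so we cannot directly take limits in a single fixed interval. The asymmetric threshold choice in the definitions of $\tau_m$ and $\tau_{m,\eps}$ gives a uniform buffer that circumvents this, reducing the problem to the routine $\xX(0,t)$-convergence on any strictly shorter subinterval.
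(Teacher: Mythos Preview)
Your proposal is correct and follows essentially the same route as the paper: extract a subsequence along which $u_{k,\eps_n} \to u_k$ almost surely in $\xX(0,T_0)$, use the buffer in the threshold ($m-1$ versus $m$) to get $\liminf_n \tau_{m,\eps_n} \geq \tau_m$ pathwise, and then pass the identity $u_{m,\eps_n} = u_{m+1,\eps_n}$ on $[0,\tau_{m,\eps_n}]$ from \cite[Lemma~4.1]{BD} to the limit via the triangle inequality. Your write-up is actually a bit more careful than the paper's (working with $t < \tau_m \wedge \tau_{m+1}$ and letting $t \uparrow$), and your observation that the second liminf $\liminf_n \tau_{m+1,\eps_n} \geq \tau_{m+1}$ is not strictly needed---since the \cite{BD} identity already lives on $[0,\tau_{m,\eps_n}]$ and $\tau_{m,\eps_n} \leq \tau_{m+1,\eps_n}$---would streamline things slightly.
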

\begin{proof}
	Fix $m$ arbitrary. Since $u_{k,\eps} \rightarrow u_{k}$ in $L_{\omega}^{\rho_0}\xX(0,T_0)$ for $k=m,m+1$, there exists $\Omega' \subset \Omega$ with full measure and a sequence $\eps_n \rightarrow 0$ such that $u_{k,\eps_n} \rightarrow u_k$ on $\Omega'$. We then write
	\begin{equation*}
	|u_{m} - u_{m+1}| \leq |u_{m} - u_{m,\eps_n}| + |u_{m,\eps_n}-u_{m+1,\eps_n}| + |u_{m+1,\eps_n}-u_{m}|. 
	\end{equation*}
	On $\Omega'$, the first and third term can be made arbitrarily small when $n$ is large. Also, the convergence of $u_{k,\eps_n}$ to $u_k$ and the definition of the stopping times implies that $\tau_{m,\eps_n} \geq \tau_m$ for all large $n$. By \eqref{eq:st_dd}, we then have $u_{m,\eps_n} = u_{m+1,\eps_n}$ in $\xX(0,\tau_{m} \wedge \tau_{m+1})$ almost surely. This shows that on a set of full measure, $|u_{m} - u_{m+1}|$ can be made arbitrarily small, thus concluding the proof. 
\end{proof}

\begin{proof} [Proof of Theorem~\ref{th:main}]
	We are now ready to prove Theorem~\ref{th:main}. We first show that $\{u_{m}\}$ is Cauchy in $L_{\omega}^{\rho_0} \xX(0,T_0)$. To see this, we fix $\eta>0$ arbitrary. By Proposition~\ref{pr:uniform_e_m} and that $u_{m,\eps} \rightarrow u_{m}$ in $L_{\omega}^{\rho_0} \xX(0,T_0)$, we have
	\begin{equation} \label{eq:trun_small}
	\Pr \Big( \|u_{m}\|_{\xX(0,T_0)} \geq K \Big) \leq \frac{C}{K^{\rho_0}}
	\end{equation}
	for every $m$ and every $K$. Now, for every $m$, $m'$ and $K$, we let
	\begin{equation*}
	\Omega_{m,m'}^{K} = \Big\{ \|u_{m}\|_{\xX(0,T_0)} \geq K \Big\} \cup \Big\{ \|u^{m'}\|_{\xX(0,T_0)} \geq K \Big\}. 
	\end{equation*}
	By \eqref{eq:trun_small} and the uniform boundedness of $\|u_{m}\|_{L_{\omega}^{\rho_0}\xX(0,T_0)}$ for all $\rho$, we know there exists a large $K$ such that
	\begin{equation} \label{eq:truncation_small}
	\big\|\1_{\Omega_{m,m'}^{K}} \big(u_{m} - u_{m'} \big) \big\|_{L_{\omega}^{\rho_0} \xX(0,T_0)} \leq \eta
	\end{equation}
	for every $m$ and $m'$. We now fix this $K$ such that \eqref{eq:truncation_small}, and take any $m, m' > K+1$. By Lemma~\ref{le:st}, we have
	\begin{equation*}
	u_{m} = u_{m'} \qquad \text{on} \phantom{1} (\Omega_{m,m'}^{K})^{c}. 
	\end{equation*}
	This shows that $\|u_{m} - u_{m'}\|_{L_{\omega}^{\rho_0}\xX(0,T_0)} < \eta$ whenever $m,m' > K+1$. Hence $\{u_{m}\}$ is Cauchy and converges to a limit $u$ in $L_{\omega}^{\rho_0}\xX(0,T_0)$. 
	
	It remains to show that $u$ satisfies the Duhamel's formula \eqref{eq:duhamel_main}. This part follows similarly as the proof for the equation of $u_{m}$, but only easier. We need to show  that each term on the right hand side of \eqref{eq:duhamel_trun_sub} converges to the corresponding term in $L_{\omega}^{\rho_0}\xX(0,T)$ as $m \rightarrow +\infty$. The convergence of
	\begin{equation*}
	\int_{0}^{t} \sS(t-s) u_{m}(s) {\rm d} W_s \quad \text{and} \quad \int_{0}^{t} \sS(t-s) \big( F_{\Phi} u_{m}(s) \big) {\rm d}s
	\end{equation*}
	follows immediately from the bounds in Propositions~\ref{pr:pre_stochastic} and~\ref{pr:pre_correction} and that $u_{m} \rightarrow u$ in $L_{\omega}^{\rho_0}(0,T_0)$. We now turn to the term with nonlinearity. We first note that Proposition~\ref{pr:uniform_e_m} holds for all $\rho \geq \rho_0$. Hence, for every $\rho \geq \rho_0$, we have $u \in L_{\omega}^{\rho}\xX(0,T_0)$ and
	\begin{equation} \label{eq:high_rho}
	\|u_{m}-u\|_{L_{\omega}^{\rho}(0,T_0)} \rightarrow 0
	\end{equation}
	as $m \rightarrow +\infty$. As for the nonlinearity itself, we have the pointwise bound
	\begin{equation*}
	\Big|\theta_{m} \big(\|u_{m}\|\big) \nN(u_{m}) - \nN(u)\Big| \leq C |u_{m} - u| \big( |u_{m}|^{4} + |u|^{4} \big) + \big|\theta_{m}\big(\|u_{m}\|\big)-1\big| \cdot |u|^{5}.  
	\end{equation*}
	Applying the Strichartz estimate \eqref{eq:Strichartz_2} and then taking the $\rho_0$-th moment on both sides, we get
	\begin{equation*}
	\begin{split}
	&\phantom{111}\EE \Big\| \int_{0}^{t} \sS(t-s) \Big(\theta_{m} \big(\|u_{m}\|_{\xX_{2}(0,s)}\big) \nN\big(u_{m}(s)\big) - \nN \big(u(s)\big)\Big) {\rm d}s \Big\|_{\xX(0,T_0)}^{\rho_0}\\
	&\leq C \EE \Big( \|u_{m} - u\|^{\rho_0} \big( \|u_{m}\|^{4\rho_0} + \|u\|^{4\rho_0} \big) + \|u\|^{5 \rho_0} \sup_{t \in [0,T_0]} |\theta_{m}\big( \|u_{m}\|_{\xX_{2}(0,t)} \big)-1|^{\rho_0} \Big). 
	\end{split}
	\end{equation*}
	Here, the norms of $\|u_{m}\|$ and $\|u\|$ are $\xX(0,T_0)$ unless otherwise specified. For the first term above, applying H\"{o}lder and using the fact that \eqref{eq:high_rho} holds for every $\rho < \rho_0$, we get
	\begin{equation*}
	\EE \Big( \|u_{m} - u\|^{\rho_0} \big( \|u_{m}\|^{4\rho_0} + \|u\|^{4\rho_0} \big) \Big) \rightarrow 0
	\end{equation*}
	as $m \rightarrow +\infty$. As for the second term, for the same reason, it suffices to show that
	\begin{equation} \label{eq:vanish}
	\EE \sup_{t \in [0,T_0]} |\theta_{m}\big( \|u_{m}\|_{\xX_{2}(0,t)} \big)-1|^{2\rho_0} \rightarrow 0. 
	\end{equation}
	Note that
	\begin{equation*}
	|\theta_{m}\big( \|u_{m}\|_{\xX_{2}(0,t)} \big)-1| \leq 2 \times \Pr \big( \|u_{m}\|_{\xX_{2}(0,T_0)} \geq m \big)
	\end{equation*}
	for all $t \in [0,T_0]$, so \eqref{eq:vanish} follows from the uniform (in $m$) boundedness of $\|u_{m}\|_{L_{\omega}^{\rho_0}\xX(0,T_0)}$. The proof of the main theorem is then complete. 
\end{proof}

\subsection{Stability of the solution}
\label{sec:stability}

The solution $u$ we constructed above is stable under perturbation of initial data. This is the content of the following proposition. 

\begin{prop} \label{pr:truestable}
	For every $\eta>0$, there exists $\kappa>0$ such that for every $u_0, v_0 \in L_{\omega}^{\infty} L_{x}^{2}$ with
	\begin{equation*}
	\|u_0\|_{L_{\omega}^{\infty}L_{x}^{2}} \leq \delta_0, \quad \|v_0\|_{L_{\omega}^{\infty}L_{x}^{2}} \leq \delta_0, \quad \|u_0 - v_0\|_{L_{\omega}^{\rho_0}L_{x}^{2}} \leq \kappa, 
	\end{equation*}
	the corresponding solutions $u$ and $v$ given as in Theorem~\ref{th:main} satisfies
	\begin{equation*}
	\|u-v\|_{L_{\omega}^{\rho_0}\xX(0,T_0)} < \eta. 
	\end{equation*}
\end{prop}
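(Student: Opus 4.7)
The plan is to compare $u$ and $v$ by interposing their truncated approximations: for any $m$,
\begin{equation*}
\|u-v\|_{L_{\omega}^{\rho_0}\xX(0,T_0)} \leq \|u-u_m\|_{L_{\omega}^{\rho_0}\xX(0,T_0)} + \|u_m-v_m\|_{L_{\omega}^{\rho_0}\xX(0,T_0)} + \|v_m-v\|_{L_{\omega}^{\rho_0}\xX(0,T_0)}.
\end{equation*}
The first and third terms should be made small by choosing $m$ large, uniformly in the initial data (as long as it is $\delta_0$-small in $L_\omega^\infty L_x^2$); then, for this fixed $m$, the middle term is controlled by stability at the level of truncated equations, after which $\kappa$ is chosen small.

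For the outer terms, I would mimic the argument in the proof of Theorem~\ref{th:main}. Set $\Omega_{m}^{K}=\{\|u_m\|_{\xX(0,T_0)}\geq K\}$. By Proposition~\ref{pr:uniform_e_m} and mass conservation, $\|u_m\|_{L_{\omega}^{\rho}\xX(0,T_0)}\leq C_{\rho}\delta_0$ for every $\rho\geq 5$, hence $\Pr(\Omega_m^K)\leq C_{\rho}K^{-\rho}$ uniformly in $m$ and over $u_0$ with $\|u_0\|_{L_\omega^\infty L_x^2}\leq \delta_0$. Taking $\rho=2\rho_0$ and applying H\"older then gives $\|\1_{\Omega_m^K}(u-u_m)\|_{L_\omega^{\rho_0}\xX(0,T_0)}\to 0$ as $K\to+\infty$, uniformly in $m$ and $u_0$. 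On the complement, I would use (an easy strengthening of) Lemma~\ref{le:st}: if $\|u_m\|_{\xX(0,T_0)}<K\leq m-1$, then $\tau_m=T_0$, and by Lemma~\ref{le:st} and the definition of $\tau_{m+1}$ one finds $\tau_{m+1}=T_0$ and $u_m=u_{m+1}$ on $\xX(0,T_0)$; iterating gives $u_m=u_{m'}$ for every $m'\geq m$, and passing to the limit yields $u_m=u$ almost surely on $(\Omega_m^K)^c$. Thus we can choose $K$ large and $m>K+1$ so that $\|u-u_m\|_{L_\omega^{\rho_0}\xX(0,T_0)}<\eta/3$, with the same bound for $v$ (both uniformly over $u_0$, $v_0$ in the small-data class).

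For the middle term, with $m$ now fixed, I would invoke Proposition~\ref{pr:uniform_stable} at the truncated subcritical level to obtain
\begin{equation*}
\|u_{m,\eps}-v_{m,\eps}\|_{L_\omega^{\rho_0}\xX(0,T_0)}\leq C_m\|u_0-v_0\|_{L_\omega^{\rho_0}L_x^2},
\end{equation*}
and then pass to the limit $\eps\to 0$ via Proposition~\ref{pr:converge_e} to get the same bound for $u_m$ and $v_m$. Choosing $\kappa$ so that $C_m\kappa<\eta/3$ closes the estimate. The main obstacle in this plan is the mild extension of Lemma~\ref{le:st} described above, namely identifying $u_m$ with the full limit $u$ on the good event $\{\|u_m\|_{\xX(0,T_0)}<K\}$; once this is in hand, the rest is an application of the previously established uniform bound (Proposition~\ref{pr:uniform_e_m}) and the stability at fixed truncation (Proposition~\ref{pr:uniform_stable}), combined with the $\eps\to 0$ convergence already proved.
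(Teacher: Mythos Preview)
Your proposal is correct and follows essentially the same three-term splitting as the paper's proof; in fact you supply more detail than the paper does on the key point that $m$ can be chosen uniformly over all $\delta_0$-small initial data (the paper simply asserts this uniformity). The handling of the middle term via Proposition~\ref{pr:uniform_stable} and passage to the $\eps\to 0$ limit is exactly what the paper does.
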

\begin{proof}
	We have
	\begin{equation*}
	\|u-v\|_{L_{\omega}^{\rho_0}\xX(0,T_0)} \leq \|u-u_{m}\|_{L_{\omega}^{\rho_0}\xX(0,T_0)} + \|u_{m}-v_{m}\|_{L_{\omega}^{\rho_0}\xX(0,T_0)} + \|v_{m}-v\|_{L_{\omega}^{\rho_0}\xX(0,T_0)}. 
	\end{equation*}
	Since $u_{m} \rightarrow u$ in $L_{\omega}^{\rho_0}\xX(0,T_0)$, there exists $m$ sufficiently large such that both the first and third term above are smaller than $\frac{\eta}{3}$. In addition, this $m$ depends only on the size $\delta_0$ of the initial data but not the concrete profile. As for the second term, by the convergence of $u_{m,\eps} \rightarrow u_{m}$ and $v_{m,\eps} \rightarrow v_{m}$ and the uniform in $\eps$ stability in Proposition~\ref{pr:uniform_stable}, we can choose $\kappa$ sufficiently small so that it is also smaller than $\frac{\eta}{3}$. The proof is then complete. 
\end{proof}

\appendix

\endappendix

\bibliographystyle{Martin}
\bibliography{Refs}

\end{document}